\newtheorem{theorem}{Theorem}[section]
\newtheorem{definition}[theorem]{Definition}
\newtheorem{lemma}[theorem]{Lemma}
\newtheorem{remark}[theorem]{Remark}
\numberwithin{equation}{section}
\keywords{Fractional semi-linear wave equations,  polynomial growth condition, weak and strong solutions, existence and uniqueness of local and global solutions}
\subjclass[2020]{26A33, 35R11, 35G31, 34A12,65K50}
\begin{document}

\title[Superdiffusive fractional dynamics]{Superdiffusive fractional dynamics: Unveiling regularity results in systems with general positive self-adjoint operators}

\author{Edgardo Alvarez}
\address{E. Alvarez, Departamento de Matem\'aticas y Estadistica Universidad
del Norte, Barranquilla (COLOMBIA)}
\email{ealvareze@uninorte.edu.co}

\author{Ciprian G. Gal}
\address{C. G. Gal, Department of Mathematics, Florida International
University, Miami, FL 33199 (USA)}
\email{cgal@fiu.edu}

\author{Valentin Keyantuo}
\address{V. Keyantuo, University of Puerto Rico, Rio Piedras Campus,
Department of Mathematics, Faculty of Natural Sciences, 17 University AVE.
STE 1701 San Juan PR 00925-2537 (USA)}
\email{valentin.keyantuo1@upr.edu}

\author{Mahamadi Warma}
\address{M.  Warma, Department of Mathematical Sciences and the Center for Mathematics and Artificial Intelligence (CMAI), George Mason University,  Fairfax, VA 22030 (USA)}
\email{mwarma@gmu.edu}


\thanks{The work of V. Keyantuo and M. Warma is partially supported by the US Army Research Office (ARO) under Award NO: W911NF-20-1-0115.}

\begin{abstract}
We investigate the following fractional order in time Cauchy
problem 
\begin{equation*}
\begin{cases}
\mathbb{D}_{t}^{\alpha }u(t)+Au(t)=f(u(t)), & 1<\alpha <2, \\ 
u(0)=u_{0},\,\,\,u^{\prime }(0)=u_{1}. & 
\end{cases}%
\end{equation*}%
where $\mathbb{D}_{t}^{\alpha }u(\cdot )$ is the Caputo time-fractional
derivative of order $\alpha\in (1, 2)$ of the function $u$. Such problems are increasingly used in concrete models in
applied sciences, notably phenomena with memory effects. We obtain results on existence and regularity of weak and strong
energy solutions assuming that $A$ is any positive self-adjoint operator
in a Hilbert space, when the nonlinearity $f\in C^{1}({\mathbb{R}}) $
satisfies suitable growth conditions. 
Our aim is to obtain regularity
results {without} assuming that the operator $A$ has compact
resolvent readily extending our recent results from our previous paper \cite{AGKW}. Examples of operators $A$ are considered, mainly differential operators such as Schr\"odinger operators, as well as various nonlocal operators.
\end{abstract}

\maketitle

\section{Introduction}

The paper deals with the problem of existence of energy solutions for a
class of semilinear \emph{super-diffusive fractional equations}. More
precisely, our aim is to investigate the following initial-value problem 
\begin{equation}
\begin{cases}
\mathbb{D}_{t}^{\alpha }u\left( x,t\right) +Au\left( x,t\right) =f(u\left(
x,t\right) ), & \mbox{ in }\;\Omega \times \left( 0,T\right) , \\ 
u(\cdot ,0)=u_{0},\;\;\partial _{t}u(\cdot ,0)=u_{1} & \text{ in }\Omega .%
\end{cases}
\label{EQ-NL0}
\end{equation}%
Here $\Omega $ is a locally compact Hausdorff space, $T>0$ and $1<\alpha <2$
are real numbers and $\mathbb{D}_{t}^{\alpha }u$ denotes the Caputo
fractional derivative of order $\alpha$ with respect to $t$, which is defined by 
\begin{equation}
\mathbb{D}_{t}^{\alpha }u(t,x):=\int_{0}^{t}g_{2-\alpha }\left( t-s\right)
\partial _{s}^{2}u(s,x)ds,\ (t,x)\in \left( 0,T\right) \times \Omega ,
\label{fra}
\end{equation}%
where we have set for $\gamma> 0$:%
\begin{equation*}
g_{\gamma}\left( t\right) =\left\{ 
\begin{array}{ll}
\frac{t^{\gamma-1}}{\Gamma (\gamma )}, & \text{if }t>0, \\ 
0, & \text{if }t\leq 0.%
\end{array}%
\right.
\end{equation*}%
It is convenient to define $g_0$  as $\delta_0,$ the Dirac measure concentrated at $0.$

When the function involved in \eqref{fra} is sufficiently smooth, then %
\eqref{fra} is equivalent to the following weaker form\footnote{%
Strictly speaking, the notion of Riemann-Liouville derivative requires a
"lesser" degree of smoothness of the function involved.} (see \cite{Po99};
cf. also \cite{Ba01,KLW2, KW}): 
\begin{equation}
\mathbb{D}_{t}^{\alpha }u(t,x)=\partial _{t}^{2}\int_{0}^{t}g_{2-\alpha
}\left( t-s\right) [u(s,x)-u(x,0)-s\partial _{s}u(x,0)]ds.  \label{fra-RL}
\end{equation}%
The nonlinearity $f\in C^{1}({\mathbb{R}})$ satisfies some suitable growth
conditions as $\left\vert u\right\vert \rightarrow \infty $ (see Section 3 and Section 4).

The theory of fractional differential equations, presented in monographs
such as \cite{Go-Ma-Ki-Ro14,Mi-Ro,Po99,SKM}, is has turned out to be critical for understanding nonlocal aspects of phenomena in various areas of science, engineering and beyond.  Applications range from memory effects to
material science \cite{GalWarma2020}, mostly viscoelasticity.  Both deterministic and stochastic problems are being intensively studied. In fact, evidence has shown  that they provide a better alternative for the modeling closely describe the phenomena under consideration.  Several monographs have been dedicated to such phenomena (see again  \cite{Go-Ma-Ki-Ro14,Mi-Ro,Po99,SKM}).

It is important to note that the fractional approach may include fractional order operators as well.  We should mention here anomalous diffusion, Levy flights, fractional Brownian motion  and random walks.  Indeed,  in \eqref{EQ-NL0}, the operator $A$ may be fractional in the space variable (see e.g.  \cite{DuSpUch,GoMaVi,MaBeNe}   Thus, one may consider equations that are fractional in time and in space.  Such problems have been considered e.g. in control theory (\cite{KW,Wa19}). Thus, the importance of the fractional model encompasses both the time and the space variable.   Investigations on both aspects have undergone an almost explosive development in recent years.   
Linear and nonlinear models have been explored, with the former often providing tools for analyzing the latter. Notably, results for linear equations with Lipschitz nonlinearity \cite{Gor,Tr-Webb78} offer
insight into semilinear and quasilinear phenomena. Existence and uniqueness
theory for linear nonhomogeneous equations related to \eqref{EQ-NL0} have
been addressed in \cite{KLW2}, while integral solutions have been studied in 
\cite{Ki-Ya}.

We note that a complete study of locally and/or globally defined weak and
strong solutions and their fine regularities in the case $0<\alpha \leq 1$
has been already performed in \cite{GalWarma2020}. Here, the authors have
established some precise and optimal conditions on the nonlinearity in order
to have existence and the precise regularity of local and global weak and
strong solutions assuming that the operator governing the problem is self-adjoint, positive and has compact resolvent. In particular, these results show that case $\alpha =1$
can be recovered in a natural way. We refer the interested reader for an
extensive comparison of our work in \cite{GalWarma2020} with other investigations
for the problem when $0<\alpha \leq 1$, which lie outside the scope of the
present paper. 
Alvarez et al. \cite{AGKW} investigated the existence, uniqueness and regularity
results for the semi-linear equation \eqref{EQ-NL0} when $1<\alpha <2$,
under appropriate conditions on the data.  In this article, the authors assumed $A$ as a self-adjoint operator in $L^{2}(X)$, that is associated with a bilinear
symmetric and closed form $\mathcal{E}_{A}$, whose domain $D(\mathcal{E}%
_{A}) $ is compactly embedded into $L^{2}(X)$ and they work with a new concept of weak and strong (energy)  solutions.  Recently,  Caicedo and Gal  considered mathematical models for the behavior of superdiffusive in a physical regime which interpolates continuously between the Schr\"odinger equation and the wave equation, respectively \cite{GaTo23}.  In  addition the same authors studied approximate controllability  of a fractional (in time) Schr\"odinger  equation involving weak and strong Caputo derivatives \cite{GaTo24}.  On the other hand, in \cite{GoPrTr17} the authors investigated the fractional Schr\"odinger  equation for $0<\alpha<1$.  They using the spectral theorem to prove the existence and uniqueness  of strong solutions which are governed by an operator solution family $\{U_{\alpha}(t)\}_{t\geq 0}$.  Asymptotic properties of this solution operator are studied in \cite{GoPrPo20}.
On the other side, when the operator $A$ is almost sectorial,  Cuesta and Ponce \cite{CuPo25} showed that is possible to obtain a variation of constants formula to an abstract fractional fractional wave equation.


The present work extends this analysis to the semi-linear case %
\eqref{EQ-NL0} with $1<\alpha<2$, offering novel results applicable to a
broad class of self-adjoint operators whose associated resolvent operator may not necessarily  be compact.  In fact, in addition to well-known classical operators, our framework includes examples of differential operators whose spectrum on a bounded domain or bounded interval is not necessarily discrete.  This is the reason why the outcomes presented herein naturally expand upon the findings of \cite%
{AGKW} in which the authors considered self-adjoint operators with compact resolvent.

The present work applies to more general operators. Those considered in \cite%
{AGKW}  which included elliptic differential operators with the Dirichlet boundary condition an a bounded open subset $\Omega\subset\mathbb R^n$, the fractional Laplacian with exterior Dirichlet condition an a bounded domain in $\mathbb R^n$ and the Dirichlet-to-Neumann operators.  Many operators on unbounded subsets of $\mathbb R^n$ fit into the present framework. An important class is that of Schr\"odinger operators. Those operators from mathematical physics constitute a large area of the theory of partial differential equations and their applications. We  present conditions from the paper \cite{MaSh} by V. G. Maz'ya and M. Schubin (see also \cite[Chapter 18]{Ma11}) which ensure that the corresponding Schr\"odinger operator fits into the framework considered in the present work. Several related results have appeared in the literature dealing with strict positivity and discreteness of the spectrum for Schr\"odinger operators. The case of magnetic Schr\"odinger operators with magnetic field is treated in \cite{KoMaSh04, LeSi81, Ma11}.  Such operators are important in the theory of liquid crystals and superconductivity.

Our study focuses on energy solutions, which  exhibit many interesting properties. The
main results guarantee existence, uniqueness, and regularity of solutions,
with explicit representations provided in terms of Mittag-Leffler functions.   More precisely,  let  $A$ be a positive (i.e., nonnegative and injective)
self-adjoint operator on a separable Hilbert space $\mathcal{H}$.  By the spectral theorem, there is an $L^{2}$-space $%
L^{2}(\Omega ,\Sigma ,\mu )$ (where $(\Omega ,\Sigma ,\mu )$ a finite
measure space), a unitary operator $U:\mathcal{H}\rightarrow L^{2}(\Omega
,\Sigma ,\mu )$ and a $\Sigma $%
-measurable function $m:\Sigma \rightarrow \mathbb{R}$, unique (modulo
changes on sets of $\mu $-measure zero) 
such that $UAU^{-1}$ is the operator of multiplication by $%
m:\Omega \rightarrow \mathbb{R}$.  In fact, 
\begin{equation*}
UAU^{-1}f=mf,\quad f\in D(UAU^{-1})=\{f\in L^{2}(\Omega ,\Sigma ,\mu
):\,mf\in L^{2}(\Omega ,\Sigma ,\mu )\}.
\end{equation*}
Then, we can characterize the solutions (weak or strong energy solutions) as 
\begin{align*}
v(t,\cdot)&= U\big(E_{\alpha
,1}(-m(\cdot )t^{\alpha })u_{0}(\cdot )+tE_{\alpha ,2}(-m(\cdot )t^{\alpha
})u_{1}(\cdot )   \\
& +\int_{0}^{t}f(v\left( \cdot ,\tau \right) )(t-\tau )^{\alpha -1}E_{\alpha
,\alpha }(-m(\cdot )(t-\tau )^{\alpha })d\tau\big)U^{-1},
\end{align*} 
where $E_{\alpha,\beta}(z)$ are the Mittag-Leffler functions (with the respect change in the linear case).  
Actually, we are able to achieve more regularity of solutions compared to our previous work \cite{AGKW}.   
More precisely, in this paper, for $\sigma\in\mathbb R,$ we denote by $V_\sigma$ the domain of the operator $A^\sigma;$ and when necessary, we endow it with the graph norm. Regarding  the initial datum (in the context of weak energy solutions) we assume that is $u_0 \in V_{\widetilde{\gamma}}$, where $1/2 \leq \widetilde{\gamma} \leq 1/\alpha$.  On the other hand, in \cite[Theorems 3.2 and 4.4]{AGKW}, the assumption was that  $u_0 \in V_{1/\alpha}$. Additionally, we introduce a new notion of  strong solution for the linear case, which includes as a special case the definition given in \cite[Definition 3.3]{AGKW}.




The manuscript is organized as follows. In Section 2, we recall some basic facts about the spectral theory and fractional calculus, mainly the Mittag-Leffler functions. The fundamental estimates that will be used throughout the document are presented in Section 3. In Section 4, we provide results on local energy weak solutions, extensions, global energy weak solutions, and blow-up in the semilinear case. Section 4 is devoted to the study of strong energy solutions in the linear case. In Section 5, we prove  existence of  strong solutions in the linear case.  The next Section 6 concerns strong energy solutions in the semilinear case: local existence, their extension to larger intervals, global energy strong solutions, and a blow-up result. Finally, some examples of relevant operators $A$ are presented in Section 6. Here we mention Schr\"odinger operators, including hose with a magnetic field, and some nonlocal operators.

\section{The functional framework}

\label{sec-main}

We first introduce some background. Let $Y,Z$ be two Banach spaces endowed
with norms $\left\Vert \cdot \right\Vert _{Y}$ and $\left\Vert \cdot
\right\Vert _{Z}$, respectively. We denote by $Y\hookrightarrow Z$ if $%
Y\subseteq Z$ and there exists a constant $C>0$ such that $\left\Vert
u\right\Vert _{Z}\leq C\left\Vert u\right\Vert _{Y},$ for $u\in Y. $ This means that the injection of $Y$ into $Z$ is continuous. In
addition, if the injection is also compact we shall denote it by $Y\overset{c%
}{\hookrightarrow }Z$. By the dual $Y^{\ast }$\ of $Y$, we think of $Y^{\ast
}$ as the set of all (continuous) linear functionals on $Y$. When equipped
with the operator norm $\left\Vert \cdot \right\Vert _{Y^{\ast }}$, $Y^{\ast
}$ is also a Banach space. In addition, we shall denote by $\left\langle
\cdot ,\cdot \right\rangle _{Y^{\star },Y}$ their duality bracket.

It is well-known that any non-negative (definite) self-adjoint operator $A,$ that is in one-to-one
correspondence with the Dirichlet form $\mathcal{E}_{A}$ associated to $A$,
turns out to possess a number of good properties provided a certain Sobolev
embedding theorem holds for $V_{1/2}:=D(\mathcal{E}_{A})$ (see, for
instance, \cite[Theorem 2.9]{G-DCDS}, cf. also \cite[Chapter 1]{Fuk}).
Similar results in abstract form can be also found in the monographs \cite%
{Dav,Fuk}. Let $A$ be a positive (i.e., nonnegative and injective)
self-adjoint operator on a separable Hilbert space $\mathcal{H}$.  In fact, by the spectral theorem, there is an $L^{2}$-space $%
L^{2}(\Omega ,\Sigma ,\mu )$ (where $(\Omega ,\Sigma ,\mu )$ a finite
measure space), a unitary operator $U:\mathcal{H}\rightarrow L^{2}(\Omega
,\Sigma ,\mu )$ and a $\Sigma $%
-measurable function $m:\Sigma \rightarrow \mathbb{R}$, unique (modulo
changes on sets of $\mu $-measure zero) 
such that $UAU^{-1}$ is the operator of multiplication by $%
m:\Omega \rightarrow \mathbb{R}$. More precisely, 
\begin{equation*}
UAU^{-1}f=mf,\quad f\in D(UAU^{-1})=\{f\in L^{2}(\Omega ,\Sigma ,\mu
):\,mf\in L^{2}(\Omega ,\Sigma ,\mu )\}.
\end{equation*}

We identify $m$ with the operator $M_{m}$ of multiplication by $m$ on $%
L^{2}(\Omega ,\Sigma ,\mu )$. Then for any Borel function $G:(0,\infty
)\rightarrow \mathbb{C}$ we can define $G(A)$ by $G(A)=U^{-1}M_{G(m)}U$.
From the point of view of differential equations, this effectively allows us
to use the functional calculus associated with $A$. Let us write $%
L^{2}(\Omega )$ instead of $L^{2}(\Omega ,\Sigma ,\mu )$. Without loss of
generality, we assume that $Af(\xi )=m(\xi )f(\xi )$, $\xi \in \Omega $, for 
$f\in D(A)=\{f\in L^{2}(\Omega ):\,m\cdot f\in L^{2}(\Omega )\}$. Thus, for $%
\gamma >0$, the fractional power $A^{\gamma }$ of the operator $A$ has the
following representation: 
\begin{equation}
A^{\gamma }f(\xi )=(m(\xi ))^{\gamma }f(\xi ),\quad \xi \in \Omega ;\quad
f\in D(A^{\gamma }):=\{f\in L^{2}(\Omega ):\,m^{\gamma }\cdot f\in
L^{2}(\Omega )\}.  \label{frac-pow}
\end{equation}%
We endow $D(A^{\gamma })$ with the norm 
\begin{equation*}
\Vert f\Vert _{\gamma }=\Vert f\Vert _{L^{2}(\Omega )}+\Vert (m(\cdot
))^{\gamma }f\Vert _{L^{2}(\Omega )}.
\end{equation*}%
Note that the previous norm is the graph norm, and it is equivalent to 
\begin{equation*}
\Vert |f|\Vert _{\gamma }=\left\{ \Vert f\Vert _{L^{2}(\Omega )}^{2}+\Vert
(m(\cdot ))^{\gamma }f\Vert _{L^{2}(\Omega )}^{2}\right\} ^{1/2}
\end{equation*}%
which comes from an inner product in an obvious way.

Since $A$ is positive and self-adjoint, we have 
\begin{align*}
(UAU^{-1}f|f)_{L^2(\Omega)}=(mf|f)_{L^2(\Omega)}=\int_{\Omega}\vert
m(\xi)\vert^2|f(\xi)|^{2}\,d\mu\geq 0,\quad \forall f\in D(A).
\end{align*}
This implies that $m \geq 0$ $\mu$-almost everywhere. We further assume that 
$0\in\rho(A)$, so that the graph norm of $A$, that is $\Vert\cdot\Vert_A$
with $\Vert x\Vert_A=\Vert x\Vert+\Vert Ax\Vert$ for $x\in D(A)$ is
equivalent to $\Vert Ax\Vert$.

In fact, the graph norm $\Vert\cdot\Vert_A$ is equivalent to any of the
expressions $\Vert x\Vert_{A,p}=(\Vert x\Vert^p+\Vert Ax\Vert^p)^{1/p}$
where $1\le p<\infty$ or $\Vert x\Vert_{A,\infty}=\max\{\Vert x \Vert,\,
\Vert Ax\Vert\}.$ In the case of a Hilbert space, the case $p=2$ is
interesting since the corresponding expression derives from an inner
product. Hence if in the above we assume that for some $m_0>0,$ $m(x)\ge m_0$
for $\mu-$almost every $x\in\Omega$ then $\|\vert f\vert \|_{\gamma}$ is
equivalent to 
\begin{equation*}
\|f\|_{D(A^{\gamma})}=\|(m(\cdot))^{\gamma}f\|_{L^2(\Omega)},\quad f\in
D(A^{\gamma}).
\end{equation*}

In making this assumption, we do not actually restrict the range of
applications of the results obtained. It suffices to observe that the
equation 
\begin{equation*}
\mathbb{D}_{t}^{\alpha }u(x,t)+Au(x,t)=F(u(x,t)),\quad t\in \lbrack 0,T]
\end{equation*}%
is replaced by 
\begin{equation*}
\mathbb{D}_{t}^{\alpha }u(x,t)+(A+m_{0}I)u(x,t)=F(u(x,t))+m_{0}u(x,t),\quad
t\in \lbrack 0,T],
\end{equation*}%
and the function $G(\xi )=F(\xi )+m_{0}\xi ,\,\xi \in \mathbb{R}$ satisfies
the conditions imposed on $F.$ We are therefore able to handle domains $%
\Omega $ that are not necessarily bounded. By duality, we can also set $%
D(A^{-\gamma })=(D(A^{\gamma }))^{\ast }$ by identifying $(L^{2}(\Omega
))^{\ast }=L^{2}\left( \Omega \right) ,$ and using the so called Gelfand
triple (see e.g. \cite{ATW}). Then $D(A^{-\gamma })$ is a Hilbert space with
the norm 
\begin{equation*}
\Vert h\Vert _{D(A^{-\gamma })}=\Vert (m(\cdot ))^{-\gamma }h\Vert
_{L^{2}(\Omega )}.
\end{equation*}%
Since $D(A^{1/2})=V_{1/2}$, we identify $V_{-1/2}$ with $D(A^{-1/2})$.

\begin{remark}
{\em 
Throughout the remainder of the paper, we shall assume that $A$ satisfies
the above assumptions, i.e., $A$ \textbf{need not} possess a compact
resolvent over $\mathcal{H}$. In addition to well-known classical examples,
one may also consider examples of differential operators whose spectrum on a
bounded domain or bounded interval is not necessarily discrete, that is,
eigenvalues of infinite multiplicity, continuous spectrum, and eigenvalues
embedded in the continuous spectrum may be present. Explicit constructions
related to Schr\"{o}dinger operators with prescribed (non-negative) essential~spectrum can
be found for instance in \cite{BeKh} (and references
therein).}
\end{remark}

The Mittag-Leffler function (see e.g. \cite{Go-Lu-Ma99,Po99,SKM}) is defined
as follows: 
\begin{equation*}
E_{\alpha ,\beta }(z):=\sum_{n=0}^{\infty }\frac{z^{n}}{\Gamma (\alpha
n+\beta )}=\frac{1}{2\pi i}\int_{Ha}e^{\mu }\frac{\mu ^{\alpha -\beta }}{\mu
^{\alpha }-z}d\mu ,\quad \alpha >0,\;\beta \in {\mathbb{C}},\quad z\in {%
\mathbb{C}},
\end{equation*}%
where $\Gamma $ is the usual Gamma function and $Ha$ is a Hankel path, i.e.
a contour which starts and ends at $-\infty $ and encircles the disc $|\mu
|\leq |z|^{\frac{1}{\alpha }}$ counterclockwise. It is well-known that $%
E_{\alpha ,\beta }(z)$ is an entire function. The following estimate of the
Mittag-Leffler function will be useful. Let $0<\alpha <2$, $\beta \in {%
\mathbb{R}}$ and $\mu $ be such that $\frac{\alpha \pi }{2}<\mu <\min \{\pi
,\alpha \pi \}$. Then there is a constant $C=C(\alpha ,\beta ,\mu )>0$ such
that 
\begin{equation}
|E_{\alpha ,\beta }(z)|\leq \frac{C}{1+|z|},\;\;\;\mu \leq |\mbox{arg}%
(z)|\leq \pi .  \label{Est-MLF}
\end{equation}%
We have that for $\alpha >0$, $\lambda >0$, $t>0$ and $m\in {\mathbb{N}}$, 
\begin{equation}
\frac{d^{m}}{dt^{m}}\left[ E_{\alpha ,1}(-\lambda t^{\alpha })\right]
=-\lambda t^{\alpha -m}E_{\alpha ,\alpha -m+1}(-\lambda t^{\alpha }),
\label{Est-MLF2}
\end{equation}%
and 
\begin{equation}
\frac{d}{dt}\left[ tE_{\alpha ,2}(-\lambda t^{\alpha })\right] =E_{\alpha
,1}(-\lambda t^{\alpha }),  \label{2}
\end{equation}%
and 
\begin{equation}
\frac{d}{dt}\left[ t^{\alpha -1}E_{\alpha ,\alpha }(-\lambda t^{\alpha })%
\right] =t^{\alpha -2}E_{\alpha ,\alpha -1}(-\lambda t^{\alpha }).  \label{3}
\end{equation}%
The proof of \eqref{Est-MLF} is contained in \cite[Theorem 1.6, page 35]%
{Po99}.  The proofs of \eqref{Est-MLF2}-\eqref{3} are contained in \cite[%
Section 1.2.3 Formula (1.83)]{Po99}. For more details on the Mittag-Leffler
functions we refer the reader to \cite{Go-Ma-Ki-Ro14, Kilbas, Po99,SKM} and
the references therein.

In what follows, we will also exploit the following estimates. 
They follow from \eqref{Est-MLF} and \cite[Lemma 3.3]{KW}.

\begin{lemma}
\label{lem-INE} Let $1<\alpha<2$ and $\alpha^{\prime }>0$. Then the
following assertions hold.

\begin{enumerate}
\item Let $0\leq \beta \le 1$, $0<\gamma <\alpha $ and $\lambda >0$. Then
there is a constant $C>0$ such that for every $t>0$, 
\begin{equation}
|\lambda ^{\beta }t^{\gamma }E_{\alpha ,\alpha ^{\prime }}(-\lambda
t^{\alpha })|\leq Ct^{\gamma -\alpha \beta }.  \label{IN-L1}
\end{equation}

\item Let $0\leq \gamma \leq 1$ and $\lambda >0$. Then there is a constant $%
C>0$ such that for every $t>0$, 
\begin{equation}
|\lambda ^{1-\gamma }t^{\alpha -2}E_{\alpha ,\alpha ^{\prime }}(-\lambda
t^{\alpha })|\leq Ct^{\alpha \gamma -2}.  \label{IN-L2}
\end{equation}
\end{enumerate}
\end{lemma}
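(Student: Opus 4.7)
The plan is to reduce both inequalities to a single application of the Mittag--Leffler bound \eqref{Est-MLF}, followed by an elementary scaling argument. Since $\lambda,t>0$, the argument $z=-\lambda t^{\alpha}$ satisfies $|\arg z|=\pi$, and for any $\alpha\in(1,2)$ one can pick $\mu$ with $\alpha\pi/2<\mu<\alpha\pi$ so that $|\arg z|\geq \mu$. Hence \eqref{Est-MLF} furnishes a constant $C=C(\alpha,\alpha',\mu)>0$, independent of $\lambda$ and $t$, with
\begin{equation*}
|E_{\alpha,\alpha'}(-\lambda t^{\alpha})|\leq \frac{C}{1+\lambda t^{\alpha}},\qquad \lambda,t>0.
\end{equation*}

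For part (1), the plan is to multiply this bound by $\lambda^{\beta}t^{\gamma}$ and factor out the target weight $t^{\gamma-\alpha\beta}$:
\begin{equation*}
|\lambda^{\beta}t^{\gamma}E_{\alpha,\alpha'}(-\lambda t^{\alpha})|\leq \frac{C\lambda^{\beta}t^{\gamma}}{1+\lambda t^{\alpha}}=C\, t^{\gamma-\alpha\beta}\cdot \frac{(\lambda t^{\alpha})^{\beta}}{1+\lambda t^{\alpha}}.
\end{equation*}
Setting $s:=\lambda t^{\alpha}\geq 0$, the remaining dimensionless factor $s^{\beta}/(1+s)$ is bounded by $1$ whenever $\beta\in[0,1]$: if $s\leq 1$ then $s^{\beta}\leq 1\leq 1+s$, while if $s\geq 1$ then $s^{\beta}\leq s\leq 1+s$. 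This gives \eqref{IN-L1}; note that the hypotheses $0<\gamma<\alpha$ play no role here and are carried along only because they are what is needed in the subsequent applications.

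Part (2) follows by the same template. Writing
\begin{equation*}
|\lambda^{1-\gamma}t^{\alpha-2}E_{\alpha,\alpha'}(-\lambda t^{\alpha})|\leq \frac{C\lambda^{1-\gamma}t^{\alpha-2}}{1+\lambda t^{\alpha}}=C\, t^{\alpha\gamma-2}\cdot \frac{(\lambda t^{\alpha})^{1-\gamma}}{1+\lambda t^{\alpha}},
\end{equation*}
and observing that $1-\gamma\in[0,1]$ when $\gamma\in[0,1]$, the same elementary estimate $s^{1-\gamma}/(1+s)\leq 1$ evaluated at $s=\lambda t^{\alpha}$ yields \eqref{IN-L2}.

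There is essentially no difficult step: both claims are quantitative rephrasings of \eqref{Est-MLF} combined with the interpolation-type inequality $s^{\theta}\leq 1+s$ valid for $\theta\in[0,1]$ and $s\geq 0$. The only subtlety worth flagging is that the constant in \eqref{Est-MLF} must be genuinely uniform over the whole half-line $\lambda t^{\alpha}\in[0,\infty)$, i.e., for both small and large values of the argument. This is standard: since $E_{\alpha,\alpha'}$ is entire it is bounded on any compact set, while the $1/(1+|z|)$ behaviour at infinity along the negative real axis is precisely the content of \cite[Theorem 1.6]{Po99}. No further input (in particular no use of \cite[Lemma 3.3]{KW}) is needed for the present lemma.
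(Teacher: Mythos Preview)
Your proof is correct and follows essentially the same approach as the paper: the paper does not give an explicit argument but merely states that the estimates follow from \eqref{Est-MLF} and \cite[Lemma 3.3]{KW}, and your derivation makes the elementary scaling step $s^{\theta}/(1+s)\le 1$ explicit in place of that citation. Your observation that the constant is uniform in $\lambda$ (not just in $t$) is exactly what the subsequent applications require, and your remark that the hypothesis $0<\gamma<\alpha$ plays no role in the bound itself is also accurate.
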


\section{Basic energy estimates for the inhomogeneous linear problem}

\label{lin-pro}

Throughout the remainder of the article, without any mention, by a.e. on $%
\Omega $, we shall mean $\mu $-a.e. on $\Omega $. Let $1<\alpha <2$ and
consider the following fractional in time wave equation 
\begin{equation}
\begin{cases}
\mathbb{D}_{t}^{\alpha }u\left( x,t\right) +Au\left( x,t\right) =f\left(
x,t\right) \;\; & \mbox{ in }\;\Omega \times (0,T), \\ 
u(\cdot ,0)=u_{0},\;\;\partial _{t}u(\cdot ,0)=u_{1} & \mbox{ in }\;\Omega ,%
\end{cases}
\label{EQ-LI}
\end{equation}%
where $u_{0},u_{1}$ and $f$ are given functions. Our notion of weak
solutions to the system \eqref{EQ-LI} is as follows.

\begin{definition}
\label{def-weak} Let $0\leq \gamma\leq 1/2$, $1/2\leq \widetilde{\gamma}\leq 1$ and recall that $V_{\gamma }:=D\left( A^{\gamma
}\right) $. A function $u$ is said to be a weak solution of \eqref{EQ-LI} on 
$(0,T)$, for some $T>0$, if the following assertions hold.

\begin{itemize}
\item Regularity:%
\begin{equation}
u\in C\left( \left[ 0,T\right] ;V_{\widetilde{\gamma }}\right),\text{ }\partial_tu\in L^2(0,T;L^2(\Omega)) ,\text{ }%
\mathbb{D}_{t}^{\alpha }u\in C((0,T];V_{-\gamma }).  \label{reg-lin}
\end{equation}

\item Initial conditions: 
\begin{equation}
\lim_{t\rightarrow 0^{+}}\left\Vert u(\cdot ,t)-u_{0}\right\Vert _{V_{\sigma
}}=0,\lim_{t\rightarrow 0^{+}}\left\Vert \partial _{t}u(\cdot
,t)-u_{1}\right\Vert _{V_{-\beta }}=0,  \label{ini}
\end{equation}%
for some 
\begin{equation*}
\min \left\{ \widetilde{\gamma },\alpha ^{-1}\right\} >\sigma \geq 0\text{
and }\beta >\max \left\{ 0,\alpha ^{-1}-\widetilde{\gamma }\right\} .
\end{equation*}

\item Variational identity: for every $\varphi \in V_{1/2}$ and for a.e. $%
t\in (0,T)$, we have 
\begin{equation}
\langle \mathbb{D}_{t}^{\alpha }u(\cdot ,t),\varphi \rangle +\mathcal{E}%
_{A}(u(\cdot ,t),\varphi )=\left\langle f\left( \cdot ,t\right) ,\varphi
\right\rangle .  \label{Var-I}
\end{equation}%
The bracket $\left\langle \cdot ,\cdot \right\rangle $ denotes the
duality pairing between $V_{-1/2}$ and $V_{1/2},$ respectively.
\end{itemize}
\end{definition}
\begin{remark}
{\em Observe that the parameters $\sigma$ and $\beta$ in the initial conditions allow a wider range of  that those in \cite[Definition 2.3]{AGKW}.}
\end{remark}

We first prove well-posedness for the linear problem (recall that $1<\alpha
<2$).

\begin{theorem}
\label{theo-weak} Let $1/2\leq \widetilde{\gamma }\leq \alpha ^{-1},$ $0\leq
\gamma \leq 1/2$ be such that $0\leq \gamma +\widetilde{\gamma }\leq 1$. Let $%
u_{0}\in V_{\widetilde{\gamma }}$ and $u_{1}\in L^{2}\left( \Omega \right) $%
. Next, assume that%
\begin{equation*}
f\in C\left( (0,T];V_{-\gamma }\right) \cap L^{q}(0,T;L^{2}(\Omega )),\text{ 
}q:=\frac{p}{p-1}\text{,}
\end{equation*}%
where $p\in \left[ 1,\infty \right) $ (and $q=1$ if $p=\infty$) is such that 
\begin{equation*}
\alpha -1-\alpha \widetilde{\gamma }+\frac{1}{p}>0\text{ and }\alpha -2+%
\frac{1}{p}>0.
\end{equation*}%
Then the system \eqref{EQ-LI} has a unique weak solution given by%
\begin{align}
u(\cdot ,t)=& E_{\alpha ,1}(-m(\cdot )t^{\alpha })u_{0}(\cdot )+tE_{\alpha
,2}(-m(\cdot )t^{\alpha })u_{1}(\cdot ) \notag \\
& +\int_{0}^{t}f(\cdot ,\tau )(t-\tau )^{\alpha -1}E_{\alpha ,\alpha
}(-m(\cdot )(t-\tau )^{\alpha })d\tau .   \label{sol-spec}
\end{align}%
Moreover, there is a constant $C>0$ such that for all $t\in (0,T]$,%
\begin{align}
\Vert u(\cdot ,t)\Vert _{V_{\widetilde{\gamma }}}& \leq C\left( \Vert
u_{0}\Vert _{V_{\widetilde{\gamma }}}+t^{1-\alpha \widetilde{\gamma }}\Vert
u_{1}\Vert _{L^{2}(\Omega )}+t^{\alpha -1-\alpha \widetilde{\gamma }+\frac{1%
}{p}}\Vert f\Vert _{L^{q}((0,T);L^{2}(\Omega ))}\right) ,  \label{EST-1} \\
\Vert \partial _{t}u(\cdot ,t)\Vert _{L^{2}(\Omega )}& \leq C\left(
t^{\alpha \widetilde{\gamma }-1}\Vert u_{0}\Vert _{V_{\widetilde{\gamma }%
}}+\Vert u_{1}\Vert _{L^{2}(\Omega )}+t^{\frac{1}{p}+\alpha -2}\Vert f\Vert
_{L^{q}((0,T);L^{2}(\Omega ))}\right)  \label{EST-1-2}
\end{align}%
and%
\begin{align}
\Vert \mathbb{D}_{t}^{\alpha }u(\cdot ,t)\Vert _{V_{-\gamma }}& \leq
Ct^{\alpha \left( \gamma +\widetilde{\gamma }-1\right) }\Vert u_{0}\Vert
_{V_{\widetilde{\gamma }}}+Ct^{1-\alpha +\alpha \gamma }\Vert u_{1}\Vert
_{L^{2}(\Omega )} \notag \\
& +Ct^{\alpha \gamma -1+1/p}\Vert f\Vert _{L^{q}((0,T);L^{2}(\Omega
))}+\left\Vert f\left( \cdot ,t\right) \right\Vert _{V_{-\gamma }}.   \label{EST-1-3}
\end{align}
\end{theorem}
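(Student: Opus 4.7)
The plan is to construct the unique weak solution explicitly via the spectral representation of $A$ and then read the three energy estimates directly off of \eqref{sol-spec} using Lemma~\ref{lem-INE}. Applying the unitary $U:\mathcal{H}\to L^{2}(\Omega,\Sigma,\mu)$ that diagonalizes $A$ as multiplication by $m$, the linear problem \eqref{EQ-LI} decouples into the scalar fractional wave equation
\begin{equation*}
\mathbb{D}_{t}^{\alpha}v(t,\xi)+m(\xi)\,v(t,\xi)=g(t,\xi),\quad v(0,\xi)=v_{0}(\xi),\ \partial_{t}v(0,\xi)=v_{1}(\xi),
\end{equation*}
pointwise in $\xi\in\Omega$. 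The Laplace transform theory for this scalar equation (combined with the derivative identities \eqref{Est-MLF2}--\eqref{3}) gives the Mittag-Leffler representation which, once pulled back through $U^{-1}$, is exactly \eqref{sol-spec}. Taking this as the candidate solution, it remains to verify the claimed regularity, initial conditions, and variational identity, and to prove uniqueness.

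For the three energy estimates I apply the pointwise inequality $|\lambda^{\beta}t^{\gamma^{\ast}}E_{\alpha,\alpha^{\prime}}(-\lambda t^{\alpha})|\leq Ct^{\gamma^{\ast}-\alpha\beta}$ of Lemma~\ref{lem-INE}(i) term-by-term, with $\beta$ matched to the target norm (for instance $\beta=\widetilde{\gamma}$ for the $V_{\widetilde{\gamma}}$-norm of the second summand in \eqref{sol-spec}, and $\beta=1-\gamma$ or $\beta=1-\gamma-\widetilde{\gamma}$ for the $V_{-\gamma}$-norm of $Au$, using \eqref{Est-MLF} directly in the borderline case $\gamma^{\ast}=0$). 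The convolution term is handled by Minkowski's integral inequality in $\xi$ followed by H\"older in $\tau$; the hypotheses $\alpha-1-\alpha\widetilde{\gamma}+1/p>0$ and $\alpha-2+1/p>0$ are precisely the integrability thresholds that keep the kernels $(t-\tau)^{\alpha-1-\alpha\widetilde{\gamma}}$ and $(t-\tau)^{\alpha-2}$ in $L^{p}_{\tau}(0,t)$, producing the exponents $t^{\alpha-1-\alpha\widetilde{\gamma}+1/p}$ and $t^{\alpha-2+1/p}$ displayed in \eqref{EST-1} and \eqref{EST-1-2}. Estimate \eqref{EST-1-2} requires a termwise differentiation of \eqref{sol-spec} through \eqref{Est-MLF2}, \eqref{2}, \eqref{3}; the Leibniz boundary contribution vanishes since $(t-\tau)^{\alpha-1}|_{\tau=t}=0$ for $\alpha>1$. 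Estimate \eqref{EST-1-3} is obtained from the equation itself: $\mathbb{D}_{t}^{\alpha}u=-Au+f$ holds in $V_{-\gamma}$, and $\|Au\|_{V_{-\gamma}}=\|m^{1-\gamma}u\|_{L^{2}(\Omega)}$ is bounded term-by-term by the same scheme; the constraint $0\leq\gamma+\widetilde{\gamma}\leq 1$ is exactly what keeps $1-\gamma-\widetilde{\gamma}\in[0,1]$ so that Lemma~\ref{lem-INE}(i) is applicable.

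Convergence to the initial data in the weaker norms $V_{\sigma}$ and $V_{-\beta}$ of Definition~\ref{def-weak} is obtained by dominated convergence on the spectral representation, together with the elementary interpolation $|E_{\alpha,1}(-\lambda)-1|\leq C\lambda^{a}$ for any $a\in[0,1]$ to trade powers of $m$ against powers of $t$; the choice $a=\widetilde{\gamma}-\sigma$ (and its analogue for $\partial_{t}u$) produces vanishing rates $t^{\alpha(\widetilde{\gamma}-\sigma)}$ and $t^{\alpha(\beta+\widetilde{\gamma})-1}$, positive precisely when $\sigma<\min\{\widetilde{\gamma},\alpha^{-1}\}$ and $\beta>\max\{0,\alpha^{-1}-\widetilde{\gamma}\}$, which is exactly the flexibility gained over \cite{AGKW}. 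The variational identity \eqref{Var-I} then follows by pairing the $V_{-1/2}$-valued identity $\mathbb{D}_{t}^{\alpha}u+Au=f$ with $\varphi\in V_{1/2}$ and using self-adjointness to rewrite $\langle Au,\varphi\rangle=\mathcal{E}_{A}(u,\varphi)$. Uniqueness is immediate: the difference of two weak solutions is mapped by $U$ to the zero scalar fractional ODE on every fiber and hence vanishes a.e. The main obstacle in the whole scheme is the combinatorial bookkeeping that simultaneously keeps $\beta\in[0,1]$ and $\gamma^{\ast}\in(0,\alpha)$ for \emph{every} invocation of Lemma~\ref{lem-INE}, so that each of the time powers in \eqref{EST-1}--\eqref{EST-1-3} comes out with exactly the stated exponent; this is what rigidly enforces the combined constraints $1/2\leq\widetilde{\gamma}\leq 1/\alpha$, $\gamma+\widetilde{\gamma}\leq 1$, and the two lower bounds on $1/p$.
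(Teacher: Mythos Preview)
Your proposal is correct and follows essentially the same route as the paper: decompose \eqref{sol-spec} into $S_{1}+S_{2}+S_{3}\ast f$, apply Lemma~\ref{lem-INE} termwise with Minkowski/H\"older on the convolution to obtain \eqref{EST-1}--\eqref{EST-1-2}, use the identity $\mathbb{D}_{t}^{\alpha}u=-Au+f$ for \eqref{EST-1-3}, and verify \eqref{ini} and \eqref{Var-I} by dominated convergence and self-adjointness. The only cosmetic differences are that the paper handles the $S_{1}$-term in \eqref{ini} by pure dominated convergence rather than your explicit interpolation bound $|E_{\alpha,1}(-\lambda)-1|\leq C\lambda^{a}$, and it disposes of uniqueness by citing \cite{Ba01,KW} instead of your fiber-wise scalar argument.
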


\begin{proof}
We prove the result in several steps.

\textbf{Step 1}. We show that $u\in C([0,T];V_{\widetilde{\gamma }})$. Let $%
t\in \lbrack 0,T]$ and set 
\begin{align*}
S_{1}(t)u_{0}(\xi ):=&E_{\alpha ,1}(-m(\xi )t^{\alpha })u_{0}(\xi )\\
S_{2}(t)u_{1}(\xi ):=&tE_{\alpha ,2}(-m(\xi )t^{\alpha })u_{1}(\xi )\\ S_3(t)x:=&t^{\alpha-1}E_{\alpha,\alpha}(-m(\xi )t^{\alpha }),
\end{align*}%
and 
\begin{equation*}
(S_{3}\ast f(\xi ,\cdot ))(t):=\int_{0}^{t}f(\cdot ,\tau )(t-\tau )^{\alpha
-1}E_{\alpha ,\alpha }((-m(\cdot )(t-\tau )^{\alpha })d\tau
\end{equation*}%
so that 
\begin{equation*}
u(\xi ,t)=S_{1}(t)u_{0}(\xi )+S_{2}(t)u_{1}(\xi )+(S_{3}\ast f(\xi ,\cdot
))(t),\quad t\in \lbrack 0,T],\,\,\xi \in \Omega .
\end{equation*}%
Using \eqref{Est-MLF} we get that there is a constant $C>0$ such that for
every $t\in \lbrack 0,T]$, 
\begin{align*}
\Vert S_{1}(t)u_{0}(\cdot )\Vert _{V_{\widetilde{\gamma }}}^{2} & \leq
C^{2}\int_{\Omega }|(m(\xi ))^{\widetilde{\gamma }}u_{0}(\xi )|^{2}d\mu
=C^{2}\Vert u_{0}\Vert _{V_{\widetilde{\gamma }}}^{2},
\end{align*}%
which implies that 
\begin{equation}
\Vert S_{1}(t)u_{0}(\cdot )\Vert _{V_{\widetilde{\gamma }}}\leq C\Vert
u_{0}\Vert _{V_{\widetilde{\gamma }}}.  \label{S1}
\end{equation}%
Taking into account \eqref{IN-L1}, we obtain that there is a constant $C>0$
such that for every $t\in \lbrack 0,T]$, 
\begin{align*}
\Vert S_{2}(t)u_{1}(\cdot )\Vert _{V_{\widetilde{\gamma }}}^{2} & \leq
\int_{\Omega }C^{2}t^{2\left( 1-\alpha \widetilde{\gamma }\right)
}|u_{1}(\xi )|^{2}d\mu =C^{2}t^{2\left( 1-\alpha \widetilde{\gamma }\right)
}\Vert u_{1}\Vert _{L^{2}(\Omega )}^{2},
\end{align*}%
so that%
\begin{equation}
\Vert S_{2}(t)u_{1}(\cdot )\Vert _{V_{\widetilde{\gamma }}}\leq Ct^{\left(
1-\alpha \widetilde{\gamma }\right) }\Vert u_{1}\Vert _{L^{2}(\Omega )}.
\label{S2}
\end{equation}%
Using \eqref{IN-L1} again and the H\"{o}lder inequality, we get that there
is a constant $C>0$ such that for every $t\in \lbrack 0,T]$, 
\begin{align*}
\Vert (S_{3}\ast f(\xi ,\cdot ))(t)\Vert _{V_{\widetilde{\gamma }}} & \leq
\int_{0}^{t}\left( \int_{\Omega }C^{2}\left( t-\tau \right) ^{2\left( \alpha
-1-\alpha \widetilde{\gamma }\right) }|f(\xi ,\tau )|^{2}d\mu \right)
^{1/2}d\tau \\
& \leq C\left( \int_{0}^{t}\left( t-\tau \right) ^{p\left( \alpha -1-\alpha 
\widetilde{\gamma }\right) }d\tau \right) ^{1/p}\left( \int_{0}^{t}\Vert
f(\xi ,\tau )\Vert ^{q}d\tau \right) ^{1/q} \\
& \leq Ct^{\alpha -1-\alpha \widetilde{\gamma }+\frac{1}{p}}\Vert f\Vert
_{L^{q}((0,T);L^{2}(\Omega ))},
\end{align*}%
provided that $p\geq 1$ is such that $\alpha -1-\alpha \widetilde{\gamma }+%
\frac{1}{p}>0.$ It follows that 
\begin{equation}
\Vert (S_{3}\ast f(\xi ,\cdot ))(t)\Vert _{V_{\widetilde{\gamma }}}\leq
Ct^{\alpha -1-\alpha \widetilde{\gamma }+\frac{1}{p}}\Vert f\Vert
_{L^{q}((0,T);L^{2}(\Omega ))}.  \label{S3}
\end{equation}%
Let us see that $u\in C([0,T];V_{\widetilde{\gamma }})$. We start proving
that $S_{1}(t)u_{0}(\xi )$ is continuous on $[0,T]$. Indeed, for $h>0$, 
\begin{align*}
\Vert S_{1}(t+h)u_{0}(\cdot )-S_{1}(t)u_{0}(\cdot )\Vert _{V_{\widetilde{%
\gamma }}}^2  =&\int_{\Omega }|(E_{\alpha ,1}(-m(\xi )(t+h)^{\alpha
})\\
&-E_{\alpha ,1}(-m(\xi )t^{\alpha })) (m(\xi ))^{\widetilde{\gamma }%
}u_{0}(\xi )|^{2}d\mu.
\end{align*}%
Since $t\mapsto E_{\alpha ,1}(-m(\xi )t^{\alpha })u_{0}(\xi )$ is continuous
for every $t\in \lbrack 0,T]$,%
\begin{equation*}
|(E_{\alpha ,1}(-m(\xi )(t+h)^{\alpha })-E_{\alpha ,1}(-m(\xi )t^{\alpha
}))(m(\xi ))^{\widetilde{\gamma }}u_{0}(\xi )|^{2}\leq C^{2}|(m(\xi ))^{%
\widetilde{\gamma }}u_{0}(\xi )|^{2}
\end{equation*}%
and $u_{0}\in V_{\widetilde{\gamma }}$, it follows from Dominated
Convergence Theorem that $S_{1}(t)u_{0}(\xi )$ is continuous for all $t\in
\lbrack 0,T]$. The proof of each of the facts that $S_{2}(t)u_{1}(\xi )$ and 
$(S_{3}\ast f(\xi ,\cdot ))(t)$ are continuous on $[0,T]$ is similar. We
have shown that $u\in C([0,T];V_{\widetilde{\gamma }})$. It also follows
from the estimates \eqref{S1}, \eqref{S2} and \eqref{S3} that there is a
constant $C>0$ such that for every $t\in \lbrack 0,T]$, 
\begin{equation}
\Vert u(\cdot ,t)\Vert _{V_{\widetilde{\gamma }}}\leq C_{1}\left( \Vert
u_{0}\Vert _{V_{\widetilde{\gamma }}}+t^{1-\alpha \widetilde{\gamma }}\Vert
u_{1}\Vert _{L^{2}(\Omega )}+t^{\alpha -1-\alpha \widetilde{\gamma }+\frac{1%
}{p}}\Vert f\Vert _{L^{q}((0,T);L^{2}(\Omega ))}\right) .  \label{B-1}
\end{equation}

\textbf{Step 2}. Next, we show that  $\partial _{t}u\in
L^{2}(0,T;L^{2}(\Omega ))$. We notice that a simple calculation (using \eqref{2}) gives for
a.e. $t\in (0,T)$, 
\begin{align}
\partial _{t}u(\cdot ,t)=& -m(\cdot )t^{\alpha -1}E_{\alpha ,1}(-m(\cdot
)t^{\alpha })u_{0}(\cdot )+E_{\alpha ,1}(-m(\cdot )t^{\alpha })u_{1}(\cdot )
\label{D1-1} \\
& +\int_{0}^{t}f(\cdot ,\tau )(t-\tau )^{\alpha -2}E_{\alpha ,\alpha
-1}(-m(\cdot )(t-\tau )^{\alpha })d\tau .  \notag \\
=:& S_{1}^{\prime }(t)u_{0}(\cdot )+S_{2}^{\prime }(t)u_{1}(\cdot
)+(S_{3}^{\prime }\ast f(\xi ,\cdot ))(t).  \notag
\end{align}%
Proceeding as in \textbf{Step 1}, on account of (\ref{IN-L2}), we get the
following estimates: 
\begin{align*}
\Vert S_{1}^{\prime }(t)u_{0}(\cdot )\Vert _{L^{2}(\Omega )}^{2} & \leq
C^{2}t^{2\left( \alpha \widetilde{\gamma }-1\right) }\int_{\Omega }|(m(\xi
))^{\widetilde{\gamma }}u_{0}(\xi )|^{2}d\mu \\
& \leq C^{2}t^{2\left( \alpha \widetilde{\gamma }-1\right) }\Vert u_{0}\Vert
_{V_{\widetilde{\gamma }}}^{2},
\end{align*}%
as well as%
\begin{align*}
\Vert S_{2}^{\prime }(t)u_{1}(\cdot )\Vert _{L^{2}(\Omega )}^{2}&
=\int_{\Omega }|E_{\alpha ,1}(-m(\xi )t^{\alpha })u_{1}(\xi )|^{2}d\mu \\
& \leq C^{2}\Vert u_{1}\Vert _{L^{2}(\Omega )}^{2},
\end{align*}%
and%
\begin{align*}
\Vert (S_{3}^{\prime }\ast f(\xi ,\cdot ))(t)\Vert _{L^{2}(\Omega )} & \leq
C\int_{0}^{t}(t-\tau )^{\alpha -2}\Vert f(\cdot ,\tau )\Vert _{L^{2}(\Omega
)}d\tau \\
& \leq Ct^{\frac{1}{p}+\alpha -2}\Vert f\Vert _{L^{q}((0,T);L^{2}(\Omega ))},
\end{align*}%
for as long as $1/p+\alpha -2>0$. In summary, we get%
\begin{equation*}
\Vert S_{1}^{\prime }(t)u_{0}(\cdot )\Vert _{L^{2}(\Omega )}\leq Ct^{\alpha 
\widetilde{\gamma }-1}\Vert u_{0}\Vert _{V_{\widetilde{\gamma }}}\;%
\mbox{
and }\;\Vert S_{2}^{\prime }(t)u_{1}(\cdot )\Vert _{L^{2}(\Omega )}\leq
C\Vert u_{1}\Vert _{L^{2}(\Omega )},
\end{equation*}%
and%
\begin{equation}
\Vert (S_{3}^{\prime }\ast f(\xi ,\cdot ))(t)\Vert _{L^{2}(\Omega )}\leq Ct^{%
\frac{1}{p}+\alpha -2}\Vert f\Vert _{L^{q}((0,T);L^{2}(\Omega ))},
\label{B1-1}
\end{equation}%
for all $t\in (0,T]$. It thus follows from these estimates that there is a
constant $C>0$ such that%
\begin{equation}
\Vert \partial _{t}u(\cdot ,t)\Vert _{L^{2}(\Omega )}\leq C_{1}\left(
t^{\alpha \widetilde{\gamma }-1}\Vert u_{0}\Vert _{V_{\widetilde{\gamma }%
}}+\Vert u_{1}\Vert _{L^{2}(\Omega)}+t^{\frac{1}{p}+\alpha -2}\Vert
f\Vert _{L^{q}((0,T);L^{2}(\Omega ))}\right) .  \label{B-2}
\end{equation}%
In addition, \eqref{EST-1}-(\ref{EST-1-2}) follows from \eqref{B-1} and %
\eqref{B-2}. Also, observe that $\frac{1}{2\alpha}<\frac{1}{2}\leq \tilde{\gamma}$ implies that $\partial _{t}u\in
L^{2}(0,T;L^{2}(\Omega ))$.

\textbf{Step 3}. Next, we prove that $\mathbb{D}_{t}^{\alpha }u\in
C((0,T];V_{-\gamma })$. It follows from \eqref{sol-spec} that 
\begin{align}
\mathbb{D}_{t}^{\alpha }u(\cdot ,t)=& -m(\cdot )E_{\alpha ,1}(-m(\cdot
)t^{\alpha })u_{0}(\cdot )-m(\cdot )tE_{\alpha ,2}(-m(\cdot )t^{\alpha
})u_{1}(\cdot )  \label{dt-al} \\
& -m(\cdot )\int_{0}^{t}f(\cdot ,\tau )(t-\tau )^{\alpha -1}E_{\alpha
,\alpha }(-m(\cdot )(t-\tau )^{\alpha })\;d\tau +f(\cdot ,t)  \notag \\
& =-Au(\cdot ,t)+f(\cdot ,t).  \notag
\end{align}%
Using \eqref{Est-MLF} and \eqref{IN-L1}, we get the following estimates, for
every $0\leq \gamma \leq \frac{1}{2},$%
\begin{align}  \label{D1}
\left\Vert m(\cdot )E_{\alpha ,1}(-m(\cdot )t^{\alpha })u_{0}(\cdot
)\right\Vert _{V_{-\gamma }}^{2} & \leq C^{2}\int_{\Omega }t^{2\alpha \left(
\gamma +\widetilde{\gamma }-1\right) }|(m(\xi ))^{\widetilde{\gamma }%
}u_{0}(\xi )|^{2}d\mu  \notag \\
& \leq C^{2}t^{2\alpha \left( \gamma +\widetilde{\gamma }-1\right) }\Vert
u_{0}\Vert _{V_{\widetilde{\gamma }}}^{2},
\end{align}%
provided that $0\leq \gamma +\widetilde{\gamma }\leq 1$ and%
\begin{align}  \label{D2}
\left\Vert m(\cdot )tE_{\alpha ,2}(-m(\cdot )t^{\alpha })u_{1}(\cdot
)\right\Vert _{V_{-\gamma }}^{2} & \leq C^{2}t^{2\left( 1-\alpha +\alpha
\gamma \right) }\Vert u_{1}\Vert _{L^{2}(\Omega )}^{2}.
\end{align}

Similarly on account of (\ref{IN-L1}), we deduce%
\begin{align*}
& \left\Vert m(\cdot )\int_{0}^{t}f(\cdot ,\tau )(t-\tau )^{\alpha
-1}E_{\alpha ,\alpha }(-m(\cdot )(t-\tau )^{\alpha })d\tau \right\Vert
_{V_{-\gamma }} \\
& \leq \int_{0}^{t}\left( \int_{\Omega }C^{2}(t-\tau )^{2(\alpha -1-\alpha
(1-\gamma ))}|f(\cdot ,\tau )|^{2}d\mu \right) ^{1/2}d\tau \\
& \leq Ct^{\alpha \gamma -1+1/p}\Vert f\Vert _{L^{q}((0,T);L^{2}(\Omega ))},
\end{align*}%
provided that $1\leq p<\frac{1}{1-\alpha \gamma }\Leftrightarrow \alpha
\gamma -1+1/p>0.$ Consequently, we obtain%
\begin{equation}
\left\Vert m(\cdot )\int_{0}^{t}f(\cdot ,\tau )(t-\tau )^{\alpha
-1}E_{\alpha ,\alpha }(-m(\cdot )(t-\tau )^{\alpha })d\tau \right\Vert
_{V_{-\gamma }}\leq Ct^{\alpha \gamma -1+1/p}\Vert f\Vert
_{L^{q}((0,T);L^{2}(\Omega ))}.  \label{D3}
\end{equation}%
Since the functions $t\mapsto m(\xi )E_{\alpha ,1}(-m(\xi )t^{\alpha
})u_{0}(\xi )$, $t\mapsto m(\xi )tE_{\alpha ,2}(-m(\xi )t^{\alpha
})u_{1}(\xi )$ and 
\begin{equation*}
t\mapsto m(\xi )\int_{0}^{t}\tau ^{\alpha -1}E_{\alpha ,\alpha }(-m(\xi
)\tau ^{\alpha })f(\xi ,\cdot )d\tau
\end{equation*}%
in \eqref{dt-al} are continuous for every $t\in \lbrack 0,T]$, we can
conclude (using the Dominated Convergence Theorem) that $\mathbb{D}%
_{t}^{\alpha }u\in C((0,T];V_{-\gamma })$ since $f\in C\left(
(0,T];V_{-\gamma }\right) $. The estimate \eqref{EST-1-3} then follows from %
\eqref{D1}, \eqref{D2}, \eqref{D3} on account of (\ref{dt-al}).

\textbf{Step 4}. First, note that $u\left( t\right) \in V_{\widetilde{\gamma 
}}\subseteq V_{1/2},$ for $1/2\leq \widetilde{\gamma}\leq \alpha^{-1},$
for as long as $\gamma \in \left[ 0,1/2\right] $ and $\widetilde{\gamma }%
+\gamma \in \left[ 0,1\right] $. By definition, this says that $(m(\cdot
))^{1/2}u\left( t\right) \in L^{2}(\Omega )$. Then 
\begin{equation*}
(m(\cdot ))^{-1/2}Au\left( t\right) =(m(\cdot ))^{-1/2}(m(\cdot )u\left(
t\right) )=(m(\cdot ))^{1/2}u\left( t\right) \in L^{2}(\Omega),
\end{equation*}%
which implies that $Au(\cdot ,t)\in V_{-1/2}$, for all $t\in \left[ 0,T%
\right] .$ Since $\mathbb{D}_{t}^{\alpha }u(\cdot ,t)\in V_{-\gamma
}\subseteq V_{-1/2},$ and $f(\cdot ,t)\in L^{2}(\Omega ),$ for a.e. $t\in
(0,T)$, then taking the duality product in (\ref{dt-al}) we get the
variational identity \eqref{Var-I}.

\textbf{Step 5}. 
By \eqref{sol-spec}, for $t<1$ small enough we have%
\begin{equation}
u\left( \cdot ,t\right) -u_{0}(\cdot )=E_{\alpha ,1}(-m(\cdot )t^{\alpha
})u_{0}(\cdot )-u_{0}(\cdot )+S_{2}\left( t\right) u_{1}(\cdot )+(S_{3}\ast
f(\xi ,\cdot ))\left( t\right) .  \label{diff-u0}
\end{equation}%
The estimates \eqref{S2} and \eqref{S3} yield 
\begin{equation}
\left\Vert (S_{3}\ast f(\xi ,\cdot ))\left( t\right) \right\Vert _{V_{\sigma
}}\leq \left\Vert (S_{3}\ast f(\xi ,\cdot ))\left( t\right) \right\Vert _{V_{%
\widetilde{\gamma }}}\leq Ct^{\alpha -1-\alpha \widetilde{\gamma }+\frac{1}{p%
}}\Vert f\Vert _{L^{q}((0,T);L^{2}(\Omega ))}\rightarrow 0,  \label{diff-s3}
\end{equation}%
as $t\rightarrow 0^{+}$ since $V_{\widetilde{\gamma }}\hookrightarrow
V_{\sigma }$ for $0\leq \sigma <\widetilde{\gamma }$ and $\alpha -1-\alpha 
\widetilde{\gamma }+\frac{1}{p}>0$. Since $\sigma \alpha <1$, it also
follows that 
\begin{equation*}
\left\Vert S_{2}\left( t\right) u_{1}(\cdot )\right\Vert _{V_{\sigma }}\leq
\left( \int_{\Omega }|(m(\xi ))^{\sigma }tE_{\alpha ,2}(-m(\xi )t^{\alpha
})u_{1}(\xi )|^{2}d\mu \right) ^{1/2}\leq Ct^{1-\alpha \sigma }\Vert
u_{1}\Vert _{L^{2}(\Omega )}\rightarrow 0,
\end{equation*}%
as $t\rightarrow 0^{+}$. Finally, since $E_{\alpha ,1}(m(\xi )t^{\alpha
})u_{0}(\xi )\rightarrow u_{0}(\xi )$ as $t\rightarrow 0^{+}$, and 
\begin{align*}
& |(m(\xi ))^{\sigma }(E_{\alpha ,1}(-m(\xi )t^{\alpha })u_{0}(\xi
)-u_{0}(\xi ))|^{2} \leq 4C^{2}m_{0}^{2(\sigma -\widetilde{\gamma })}|(m(\xi
))^{\widetilde{\gamma }}u_{0}(\xi ))|^{2}\in L^{1}(\Omega ),
\end{align*}%
it follows from the Dominated Convergence Theorem that
\begin{equation*}
\left\Vert E_{\alpha ,1}(-m(\cdot )t^{\alpha })u_{0}(\cdot )-u_{0}(\cdot
)\right\Vert _{V_{\sigma }}=\left( \int_{\Omega }|(m(\xi ))^{\sigma
}(E_{\alpha ,1}(-m(\xi )t^{\alpha })u_{0}(\xi )-u_{0}(\xi ))|^{2}d\mu
\right) ^{1/2}\rightarrow 0,
\end{equation*}%
as $t\rightarrow 0^{+}$. Thus, the first statement of (\ref{ini}) is
verified. On account of (\ref{D1-1}), we have%
\begin{equation}
\partial _{t}u\left( \cdot ,t\right) -u_{1}(\cdot )=S_{1}^{\prime
}(t)u_{0}(\cdot )+E_{\alpha ,1}(-m(\cdot )t^{\alpha })u_{1}(\cdot
)-u_{1}(\cdot )+(S_{3}^{\prime }\ast f(\xi ,\cdot ))(t)  \label{diff-u1}
\end{equation}%
and once again, by (\ref{B1-1}), it is easy to see, on account of the
condition $1/p+\alpha -2>0,$ that%
\begin{equation}
\left\Vert (S_{3}\ast f(\xi ,\cdot ))^{\prime }(t)\right\Vert _{L^{2}\left(
\Omega \right) }\rightarrow 0\text{ as }t\rightarrow 0^{+}.
\label{diff-u1-1}
\end{equation}%
A simple calculation also yields from (\ref{IN-L1}), that%
\begin{align}
\left\Vert S_{1}^{\prime }(t)u_{0}(\cdot )\right\Vert _{V_{-\beta }} & \leq
Ct^{\alpha -1-\alpha \left( 1-\widetilde{\gamma }-\beta \right) }\left\Vert
u_{0}\right\Vert _{V_{\widetilde{\gamma }}}  \notag \\
& =Ct^{\alpha \widetilde{\gamma }+\alpha \beta -1}\left\Vert
u_{0}\right\Vert _{V_{\widetilde{\gamma }}}\rightarrow 0,  \label{diff-u1-2}
\end{align}%
as $t\rightarrow 0^{+}$, owing to the condition $\alpha \left( \widetilde{%
\gamma }+\beta \right) >1$. Finally, since $E_{\alpha ,1}(m(\xi )t^{\alpha
})u_{1}(\xi )\rightarrow u_{1}(\xi )$ as $t\rightarrow 0^{+}$, and 
\begin{equation*}
|(m(\xi ))^{-\beta }(E_{\alpha ,1}(-m(\xi )t^{\alpha })u_{1}(\xi )-u_{1}(\xi
))|^{2}\leq 4C^{2}m_{0}^{-2\beta }|(m(\xi ))^{\beta }u_{1}(\xi ))|^{2}\in
L^{1}(\Omega ),
\end{equation*}%
it follows from Dominated Convergence Theorem that in the limit as $%
t\rightarrow 0^{+}$,%
\begin{equation*}
\left\Vert E_{\alpha ,1}(-m(\cdot )t^{\alpha })u_{1}(\cdot )-u_{1}(\cdot
)\right\Vert _{V_{-\beta }}=\left( \int_{\Omega }|(m(\xi ))^{-\beta
}(E_{\alpha ,1}(-m(\xi )t^{\alpha })u_{1}(\xi )-u_{1}(\xi ))|^{2}d\mu
\right) ^{1/2}\rightarrow 0.
\end{equation*}%
Collecting the preceding three estimates yields the second statement of (\ref%
{ini}).

\textbf{Step 6}. Finally we show uniqueness. Let $u,v$ be any two weak
solutions of \eqref{EQ-LI} with the same initial data $u_{0},u_{1}$ and
source $f=f\left( x,t\right) $ and then set $w=u-v$. Then $w$ is a weak
solution of the system%
\begin{equation}
\begin{cases}
\mathbb{D}_{t}^{\alpha }w=-Aw,\;\; & \mbox{ in }\;\Omega \times (0,T), \\ 
w(\cdot ,0)=0,\;\partial _{t}w(\cdot ,0)=0\; & \mbox{ in }\;\Omega .%
\end{cases}
\label{diff}
\end{equation}%
This system has only a unique solution that is the null solution $w=0$ (see,
e.g., \cite{Ba01,KW}). Hence $u=v$. 
The proof of the theorem is finished.\emph{\ }
\end{proof}

\begin{remark}
{\em
Under the conditions of the previous theorem,  we have that if $\tilde{\gamma}\geq \frac{1}{\alpha}$, then  $u\in C^{1}([0,T];L^2(\Omega))$.   On the other hand,  if $\theta\in (\frac{1-\alpha\tilde{\gamma}}{\alpha},1-\tilde{\gamma})$, then $\partial_tu\in C([0,T];V_{-\theta}))$, compare with \cite[Theorem 1.1]{Lo-Sf21}.}
\end{remark}

\begin{remark}
{\em
An interesting class of solutions is that of $V_{1/2}\times L^{2}\left( \Omega
\right) $-energy solution which is obtained using the values $\gamma =%
\widetilde{\gamma }=1/2$ previously. In that case, the estimates (\ref{EST-1}%
)-(\ref{EST-1-3}) read as follows:%
\begin{align}
\Vert u(\cdot ,t)\Vert _{V_{1/2}}& \leq C\left( \Vert u_{0}\Vert
_{V_{1/2}}+t^{1-\frac{\alpha }{2}}\Vert u_{1}\Vert _{L^{2}(\Omega )}+t^{%
\frac{\alpha }{2}-1+\frac{1}{p}}\Vert f\Vert _{L^{q}((0,T);L^{2}(\Omega
))}\right) ,  \label{p1} \\
\Vert \partial _{t}u(\cdot ,t)\Vert _{L^{2}(\Omega )}& \leq C\left( t^{\frac{%
\alpha }{2}-1}\Vert u_{0}\Vert _{V_{1/2}}+\Vert u_{1}\Vert _{L^{2}(\Omega
)}+t^{\frac{1}{p}+\alpha -2}\Vert f\Vert _{L^{q}((0,T);L^{2}(\Omega
))}\right)  \label{p2}
\end{align}%
and%
\begin{align}
\Vert \mathbb{D}_{t}^{\alpha }u(\cdot ,t)\Vert _{V_{-1/2}}& \leq C\Vert
u_{0}\Vert _{V_{1/2}}+Ct^{1-\frac{\alpha }{2}}\Vert u_{1}\Vert
_{L^{2}(\Omega )}  \label{p3} \\
& +Ct^{\frac{\alpha }{2}-1+1/p}\Vert f\Vert _{L^{q}((0,T);L^{2}(\Omega
))}+\left\Vert f\left( \cdot ,t\right) \right\Vert _{V_{-1/2}}.  \notag
\end{align}%
Also we can show that, if $2/p+\alpha -3>0,$%
\begin{equation*}
g_{2-\alpha }\ast \left\Vert \partial _{t}u\right\Vert _{L^{2}\left( \Omega
\right) }^{2}\left( t\right) \leq Ct^{\frac{2}{p}+\alpha -3}\Vert f\Vert
_{L^{q}((0,T);L^{2}(\Omega ))}^{2}+Cg_{2-\alpha }\ast \Vert u_{1}\Vert
_{L^{2}\left( \Omega \right) }^{2}+Cg_{2-\alpha }\ast t^{\alpha -2}\Vert
u_{0}\Vert _{V_{1/2}}^{2}.
\end{equation*}
}
\end{remark}

\section{Weak energy solutions for the semilinear problem}

Consider the semilinear problem 
\begin{equation}
\begin{cases}
\mathbb{D}_{t}^{\alpha }u\left( x,t\right) +Au\left( x,t\right) =f(u\left(
x,t\right) ), & \mbox{ in }\;\Omega \times \left( 0,T\right) , \\ 
u(\cdot ,0)=u_{0},\;\;\partial _{t}u(\cdot ,0)=u_{1} & \text{ in }\Omega.%
\end{cases}
\label{EQ-NL}
\end{equation}%
Fix $0<T^{\star}\leq T$.  To solve this problem, we introduce the Banach space%
\begin{equation*}
X_{s,T^{\star }}=\left\{ u\in C\left( \left[ 0,T^{\star }\right]
;V_{1/2}\right) \cap C^{1}\left( (0,T^{\star }];L^{2}\left( \Omega \right)
\right) :\left\Vert u\right\Vert _{s,T^{\star }}<\infty \right\} ,
\end{equation*}%
where the norm is defined as (for a properly chosen $s>0,$ depending on the
physical parameters)%
\begin{equation*}
\left\Vert u\right\Vert _{s,T^{\star }}^{2}:=\sup_{t\in \left[ 0,T^{\star }%
\right] }\left( \left\Vert u\left( \cdot ,t\right) \right\Vert
_{V_{1/2}}^{2}+t^{2s}\left\Vert \partial _{t}u\left( \cdot ,t\right)
\right\Vert _{L^{2}\left( \Omega \right) }^{2}+g_{2-\alpha }\ast \left\Vert
\partial _{t}u\right\Vert _{L^{2}\left( \Omega \right) }^{2}\left( t\right)
\right) .
\end{equation*}%
Note that, for the latter, $g_{2-\alpha }\ast \left\Vert
\partial _{t}u\right\Vert _{L^{2}\left( \Omega \right) }^{2}\left( t\right)\to \left\Vert
\partial _{t}u\right\Vert _{L^{2}\left( \Omega \right) }^{2}$ as 
$\alpha\to 2^{-}$,where the limit is also the corresponding kinetic energy associated with a
weak solution in the case of the classical wave equation.  This  is the reason why we refer to this term as kinetic energy.

Next, define the energy  space $\mathcal{B}_{s,T^{\star },R^{\star }}\subset X_{s,T^{\star }},$
where%
\begin{equation*}
\mathcal{B}_{s,T^{\star },R^{\star }}:=\left\{ u\in X_{s,T^{\star
}}:\left\Vert u\right\Vert _{s,T^{\star }}\leq R^{\star }\text{ and }u\left(
0\right) =u_{0},\text{ }\partial _{t}u\left( 0\right) =u_{1}\right\} .
\end{equation*}%
Let $\Phi: \mathcal{B}_{s,T,R}\to \mathcal{B}_{s,T,R}$ defined by 
\begin{align}
\Phi \left( v\left( \cdot \right) \right) \left( t\right) =& E_{\alpha
,1}(-m(\cdot )t^{\alpha })u_{0}(\cdot )+tE_{\alpha ,2}(-m(\cdot )t^{\alpha
})u_{1}(\cdot )  \label{p4} \\
& +\int_{0}^{t}f(v\left( \cdot ,\tau \right) )(t-\tau )^{\alpha -1}E_{\alpha
,\alpha }(-m(\cdot )(t-\tau )^{\alpha })d\tau, \quad v\in \mathcal{B}_{s,T,R}.  \notag
\end{align}%
We say that a function $u$ is \textit{local weak energy solution} to \eqref{EQ-NL} if $\Phi(u(t))=u(t)$.

%
%
%

\begin{enumerate}
\item[(\textbf{Hf2})] We assume that there exists $C_{f}>0$ such that 
\begin{equation}
f(0)=0\quad \mbox{and}\quad |f^{\prime }(\sigma )|\leq C_{f}|\sigma
|^{r-1},\quad r>1,\,\sigma \in \mathbb{R},  \label{embed-supnorm}
\end{equation}%
and 
\begin{equation*}
V_{1/2}\hookrightarrow L^{2r}(\Omega ).
\end{equation*}
\end{enumerate}
Note that \eqref{embed-supnorm} implies that 
\begin{equation*}
|f(\sigma )|\leq C_{f}|\sigma |^{r},\quad \sigma \in \mathbb{R}.
\end{equation*}%

The next theorem is the main result of this section.  Through the Banach fixed point theorem we establish the existence of local weak solutions of \ref{EQ-NL} (on some interval $[0,T]$).  Then we show that the solution can be extended to a larger interval.($[0,T_{\ast}]$)  Finally,  we prove the maximal interval $[0,T_{\max})$  with the property that either $T_{\max}=\infty$ or $T_{\max}<\infty$.  In the last case the norm blows-up in the sense of \eqref%
{t-maximal} below.

\begin{theorem}
\label{main-weak}Assume (\textbf{Hf2}). The problem (\ref{EQ-NL}) has a
unique weak energy solution on $X_{s,T}$, for all $T<T_{\max }$, where
either $T_{\max }=\infty $ or $T_{\max }<\infty $, and in that case, 
\begin{equation}
\lim_{t\rightarrow T_{\max }}\left( \left\Vert u\left( t\right) \right\Vert
_{V_{1/2}}^{2}+t^{2s}\left\Vert \partial _{t}u\left( t\right) \right\Vert
_{L^{2}\left( \Omega \right) }^{2}+g_{2-\alpha }\ast \left\Vert \partial
_{t}u\right\Vert _{L^{2}\left( \Omega \right) }^{2}\left( t\right) \right)
=\infty .  \label{t-maximal}
\end{equation}
holds.
\end{theorem}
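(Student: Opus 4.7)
The plan is to apply the Banach fixed point theorem to the nonlinear map $\Phi$ of \eqref{p4} on the closed ball $\mathcal{B}_{s,T^{\star},R^{\star}}$, and then to extract the maximal interval by a standard continuation/contradiction argument. Given $v\in\mathcal{B}_{s,T^{\star},R^{\star}}$, we view $\Phi(v)$ as the weak solution of the \emph{linear} problem \eqref{EQ-LI} with source $F(x,t):=f(v(x,t))$, initial data $u_{0}\in V_{1/2}$, $u_{1}\in L^{2}(\Omega)$, and regularity parameter $\widetilde{\gamma}=1/2$, so that the estimates \eqref{p1}--\eqref{p3} of Theorem \ref{theo-weak} apply. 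The growth condition (\textbf{Hf2}) together with the embedding $V_{1/2}\hookrightarrow L^{2r}(\Omega)$ gives
\begin{equation*}
\|F(\cdot,t)\|_{L^{2}(\Omega)}\leq C_{f}\|v(\cdot,t)\|_{L^{2r}(\Omega)}^{r}\leq C\|v(\cdot,t)\|_{V_{1/2}}^{r}\leq C(R^{\star})^{r},
\end{equation*}
hence $\|F\|_{L^{q}(0,T^{\star};L^{2}(\Omega))}\leq C(T^{\star})^{1/q}(R^{\star})^{r}$. Picking $s\geq 1-\alpha/2$ to absorb the singularity $t^{\alpha/2-1}$ produced by $u_{0}\in V_{1/2}$ in \eqref{p2}, and invoking also the kinetic-energy bound from the remark following Theorem \ref{theo-weak}, one obtains an estimate of the form
\begin{equation*}
\|\Phi(v)\|_{s,T^{\star}}\leq C_{0}\bigl(\|u_{0}\|_{V_{1/2}}+\|u_{1}\|_{L^{2}(\Omega)}\bigr)+C_{1}(T^{\star})^{\kappa}(R^{\star})^{r}
\end{equation*}
for some $\kappa>0$ depending on $\alpha$ and the chosen $(p,q)$.

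Next, fix $R^{\star}:=2C_{0}\bigl(\|u_{0}\|_{V_{1/2}}+\|u_{1}\|_{L^{2}(\Omega)}\bigr)$ and shrink $T^{\star}$ so that $C_{1}(T^{\star})^{\kappa}(R^{\star})^{r-1}\leq 1/2$; this yields $\Phi(\mathcal{B}_{s,T^{\star},R^{\star}})\subseteq\mathcal{B}_{s,T^{\star},R^{\star}}$. For the contraction step, the mean value theorem and (\textbf{Hf2}) produce the pointwise bound $|f(u)-f(v)|\leq C_{f}\bigl(|u|^{r-1}+|v|^{r-1}\bigr)|u-v|$, whence H\"older's inequality combined with $V_{1/2}\hookrightarrow L^{2r}(\Omega)$ gives
\begin{equation*}
\|f(u)-f(v)\|_{L^{2}(\Omega)}\leq C\bigl(\|u\|_{V_{1/2}}^{r-1}+\|v\|_{V_{1/2}}^{r-1}\bigr)\|u-v\|_{V_{1/2}}.
\end{equation*}
Since $\Phi(u)-\Phi(v)$ solves the linear problem with zero initial data and source $f(u)-f(v)$, reapplying the linear estimates yields $\|\Phi(u)-\Phi(v)\|_{s,T^{\star}}\leq C_{2}(T^{\star})^{\kappa}(R^{\star})^{r-1}\|u-v\|_{s,T^{\star}}$, which becomes a strict contraction after possibly shrinking $T^{\star}$ further. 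The Banach fixed point theorem then supplies a unique $u\in\mathcal{B}_{s,T^{\star},R^{\star}}$ with $\Phi(u)=u$, i.e., a local weak energy solution.

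To obtain the maximal interval, set $T_{\max}:=\sup\{T^{\star}>0:\text{a weak energy solution exists on }[0,T^{\star}]\}$. The local existence result, restarted from any $t_{0}\in(0,T_{\max})$ with new initial data $\bigl(u(t_{0}),\partial_{t}u(t_{0})\bigr)\in V_{1/2}\times L^{2}(\Omega)$, produces a continuation window whose length $\delta=\delta(R)$ depends only on an upper bound $R$ for the quantity appearing in \eqref{t-maximal}. Hence, if $T_{\max}<\infty$ and that quantity remained bounded as $t\to T_{\max}^{-}$, we could extend the solution past $T_{\max}$, contradicting its definition; this gives the blow-up alternative \eqref{t-maximal}. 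Uniqueness on any compact subinterval $[0,T]\subset[0,T_{\max})$ follows by covering $[0,T]$ with finitely many intervals short enough that the contraction estimate above applies to the difference of two solutions sharing the same data.

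The principal obstacle is a coherent choice of the conjugate pair $(p,q)$ jointly satisfying the three constraints of Theorem \ref{theo-weak} with $\widetilde{\gamma}=1/2$, namely $\alpha-1-\alpha/2+1/p>0$ and $\alpha-2+1/p>0$, together with the kinetic-energy condition $2/p+\alpha-3>0$ from the remark, while simultaneously producing a strictly positive exponent $\kappa$ on $T^{\star}$ in \emph{every} component of the weighted norm $\|\cdot\|_{s,T^{\star}}$. Once this calibration is fixed, the singular weight $t^{2s}$ is precisely tailored to the $t^{\alpha/2-1}$ singularity of $\partial_{t}u$ inherited from $u_{0}\in V_{1/2}$, and the remaining verifications (continuity in $t$, attainment of the initial conditions, and the fixed point estimates) are essentially bookkeeping via the linear theory of Theorem \ref{theo-weak}.
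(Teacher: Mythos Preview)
Your contraction argument is essentially the same as the paper's: both apply the linear estimates \eqref{p1}--\eqref{p3} (the paper derives them inline rather than invoking Theorem~\ref{theo-weak} as a black box), both use the mean-value Lipschitz estimate on $f$, and both choose $s>1-\alpha/2$ to tame the $t^{\alpha/2-1}$ singularity from $u_{0}\in V_{1/2}$. That part is fine.

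The genuine gap is in your continuation argument. You write that the local existence result can be ``restarted from any $t_{0}\in(0,T_{\max})$ with new initial data $\bigl(u(t_{0}),\partial_{t}u(t_{0})\bigr)$'' and that the continuation window depends only on the size of that data. This is the standard argument for semigroup problems, but it fails here because the Caputo derivative $\mathbb{D}_{t}^{\alpha}$ is \emph{nonlocal in time}: the solution at time $t>t_{0}$ is not determined by $\bigl(u(t_{0}),\partial_{t}u(t_{0})\bigr)$ alone, but by the entire history $u|_{[0,t_{0}]}$. Concretely, the representation \eqref{sol-spec} always integrates from $0$, and there is no translation identity of the form $u(t)=S_{1}(t-t_{0})u(t_{0})+S_{2}(t-t_{0})\partial_{t}u(t_{0})+\int_{t_{0}}^{t}\cdots$; the operators $S_{1},S_{2},S_{3}$ do not form a two-parameter evolution family. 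Consequently, restarting with fresh Cauchy data at $t_{0}$ produces a different problem, not a continuation of the original one.

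The paper handles this by an explicit extension step: it fixes the already-constructed solution $u$ on $[0,T^{\star}]$, works in the space $\mathcal{K}_{s,T^{\star},R}$ of functions that agree with $u$ on $[0,T^{\star}]$, and shows that the \emph{same} map $\Phi$ (still integrating from $0$) is a contraction on $\mathcal{K}_{s,T^{\star},R}$ for small enough $\tau$. The blow-up alternative is then proved by showing that under a uniform bound the sequences $u(\cdot,t_{n})$, $\partial_{t}u(\cdot,t_{n})$, $g_{2-\alpha}\ast\|\partial_{t}u\|^{2}(t_{n})$ are Cauchy as $t_{n}\uparrow T_{\max}$ (Lemma~\ref{lem-34}), so that $u$ extends continuously to $[0,T_{\max}]$ and the extension argument applies once more. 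Your proposal needs to replace the ``restart'' step by an argument of this type that respects the memory of the fractional derivative.
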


\begin{proof}
We will the following:
\begin{itemize}
\item (Contraction argument) $\Phi $ is a contraction from $\mathcal{B}_{s,T^{\star},R^{\star}}\rightarrow \mathcal{%
B}_{s,T^{\star},R^{\star}}$, for some $T^{\star},R^{\star},s>0.$

\item (Extension argument) The solution $u$ can be extended by continuity from $\left[ 0,T^{\ast }%
\right] ,$ for some $T_{\ast }>0,$ to the interval $\left[ 0,T^{\ast
}+\varsigma \right] ,$ for some $\varsigma >0.$

\item (Global vs blow-up) Finally,  we show that that either $T_{\max }=\infty $ or else
if $T_{\max }<\infty $, one has \eqref%
{t-maximal}.

\end{itemize}

\textbf{Step 1 (The contraction argument)}. Since $f$ is continuously
differentiable, we have that $\Phi (u(\cdot ))(t)$ is continuously
differentiable on $(0,T^{\star }]$. We will show that by an appropriate
choice of $T^{\star },R^{\star }>0$, $\Phi :\mathcal{B}_{s,T^{\star
},R^{\star }}\rightarrow \mathcal{B}_{s,T^{\star },R^{\star }}$ is a
contraction with respect to the metric induced by the norm $\left\Vert \cdot
\right\Vert _{s,T^{\star }}$. The appropriate choice of $T^{\star },R^{\star
}>0$ will be specified below. We first show that $\Phi $ maps $\mathcal{B}%
_{s,T^{\star },R^{\star }}$ into $\mathcal{B}_{s,T^{\star },R^{\star }}$.
Indeed, let $u\in \mathcal{B}_{s,T^{\star },R^{\star }}$. Then, by
assumption (\textbf{Hf2}) and the fact that $u\in C([0,T^{\ast }],V_{1/2})$,
for every $t\in \lbrack 0,T^{\star }]$, we have that
\begin{equation}
\Vert f(u(\cdot ,t))\Vert _{L^{2}(\Omega )}\leq C_{f}\Vert u(\cdot ,t)\Vert
_{V_{1/2}}^{r},  \label{C1}
\end{equation}%
for some $C_{f}>0$. Using the estimate \eqref{C1} we get
\begin{align*}
& \left\Vert \int_{0}^{t}f(u(\xi ,\tau ))(t-\tau )^{\alpha -1}E_{\alpha
,\alpha }((-m(\xi )(t-\tau )^{\alpha })d\tau \right\Vert _{V_{1/2}} \\
& \leq C\int_{0}^{t}(t-\tau )^{\frac{\alpha }{2}-1}\Vert u(\cdot ,\tau
)\Vert _{V_{1/2}}^{r}d\tau \\
& \leq C(T^{\star })^{\frac{\alpha }{2}}(R^{\star })^{r}.
\end{align*}%
Thus, proceeding as the proof of Theorem \ref{theo-weak}, we get that there
is a constant $C>0$ such that for every $t\in \lbrack 0,T^{\star }]$, 
\begin{equation}
\Vert \Phi (u)(\cdot ,t)\Vert _{V_{1/2}}\leq C\left( \Vert u_{0}\Vert
_{V_{1/2}}+\Vert u_{1}\Vert _{L^{2}(\Omega )}+(T^{\star })^{\frac{\alpha }{2}%
}(R^{\star })^{r}\right) .  \label{C2}
\end{equation}%
Next, we show that $\Phi (u)\in C([0,T^{\star }];V_{1/2})$. Indeed, since $%
t\mapsto E_{\alpha ,1}(-m(\xi )t^{\alpha })v$ is continuous for every $t\in
\lbrack 0,T^{\star }]$ and for all $v\in \mathcal{H}$, 
\begin{equation*}
|(E_{\alpha ,1}(-m(\xi )(t+h)^{\alpha })-E_{\alpha ,1}(-m(\xi )t^{\alpha
}))(m(\xi ))^{1/2}u_{0}(\xi )|^{2}\leq C^{2}|(m(\xi ))^{1/2}u_{0}(\xi )|^{2}
\end{equation*}%
and $u_{0}\in V_{1/2}$, it follows from the Dominated Convergence Theorem that the function $%
E_{\alpha ,1}(-m(\cdot )t^{\alpha })u_{0}(\cdot )\in C([0,T^{\star
}];V_{1/2})$. On the other hand, we see that $tE_{\alpha ,2}(-m(\cdot
)t^{\alpha })u_{1}(\cdot )$ belongs to $C([0,T^{\star }];V_{1/2})$. The
continuity in $t=0$ is direct. Let $t>0$ and $h>0$. Using \eqref{2}, we get%
\begin{align*}
& \left\vert (m(\xi ))^{1/2}\left[ (t+h)E_{\alpha ,2}(-m(\xi )(t+h)^{\alpha
})-tE_{\alpha ,2}(-m(\xi )t^{\alpha })\right] \right\vert \\
& =\left\vert (m(\xi ))^{1/2}\int_{t}^{t+h}E_{\alpha ,1}(-m(\xi )\sigma
^{\alpha })d\sigma \right\vert \\
& \leq C(m(\xi ))^{-1/2}\int_{t}^{t+h}\frac{m(\xi )\sigma ^{\alpha }}{%
1+m(\xi )\sigma ^{\alpha }}\sigma \,d\sigma \\
& \leq \frac{Cm_{0}^{-1/2}}{\alpha -1}\left( \frac{1}{t^{\alpha -1}}-\frac{1%
}{(t+h)^{\alpha -1}}\right) .
\end{align*}%
Therefore, we deduce 
\begin{align*}
& \Vert \left[ (t+h)E_{\alpha ,2}(-m(\cdot )(t+h)^{\alpha })-tE_{\alpha
,2}(-m(\cdot )t^{\alpha })\right] u_{1}(\cdot )\Vert _{V_{1/2}} \\
& \leq \frac{Cm_{0}}{\alpha -1}\left( \frac{1}{t^{\alpha -1}}-\frac{1}{%
(t+h)^{\alpha -1}}\right) \Vert u_{1}(\cdot )\Vert _{L^{2}(\Omega
)}\rightarrow 0\quad \mbox{as}\quad h\rightarrow 0^{+}.
\end{align*}%
Next, 
\begin{align*}
& \Vert \int_{0}^{t+h}(t+h-\tau )^{\alpha -1}E_{\alpha ,\alpha }(-m(\cdot
)(t+h-\tau )^{\alpha })f(u(\cdot ,\tau ))d\tau \\
& -\int_{0}^{t}(t-\tau )^{\alpha -1}E_{\alpha ,\alpha }(-m(\cdot )(t-\tau
)^{\alpha })f(u(\cdot ,\tau ))\;d\tau \Vert _{V_{1/2}} \\
& \leq \int_{0}^{t}\Vert \lbrack (t+h-\tau )^{\alpha -1}E_{\alpha ,\alpha
}(-m(\cdot )(t+h-\tau )^{\alpha })-(t-\tau )^{\alpha -1}E_{\alpha ,\alpha
}(-m(\cdot )(t-\tau )^{\alpha })]f(u(\cdot ,\tau ))\Vert _{V_{1/2}}d\tau \\
& +\int_{t}^{t+h}\Vert (t+h-\tau )^{\alpha -1}E_{\alpha ,\alpha }(-m(\cdot
)(t+h-\tau )^{\alpha })f(u(\cdot ,\tau ))\Vert _{V_{1/2}}d\tau \\
& =:I_{1}+I_{2}.
\end{align*}%
Let us see that $I_{1}\rightarrow 0$ as $h\rightarrow 0^{+}$. First, setting 
\begin{equation*}
Q_{\alpha ,\beta }\left( x\right) :=x^{\beta -1}E_{\alpha ,\beta }(-m(\cdot
)x^{\alpha }).
\end{equation*}%
We need to prove that%
\begin{equation}
\lim_{h\rightarrow 0^{+}}\Vert \left[ Q_{\alpha ,\alpha }\left( t+h-\tau
\right) -Q_{\alpha ,\alpha }\left( t-\tau \right) \right] f(u(\cdot ,\tau
))\Vert _{V_{1/2}}=0.  \label{dct}
\end{equation}%
Indeed, since $t\mapsto t^{\alpha -1}E_{\alpha ,\alpha }(-m(\xi )t^{\alpha
})f(u(\xi ,t))$ is continuous for every $t\in \lbrack 0,T^{\star }]$, 
\begin{align*}
& |(m(\xi ))^{1/2}[Q_{\alpha ,\alpha }\left( t+h-\tau \right) -Q_{\alpha
,\alpha }\left( t-\tau \right) ]f(u(\xi ,\tau ))| \\
& \leq 2C(t-\tau )^{\frac{\alpha }{2}-1}|f(u(\xi ,\tau ))|,
\end{align*}%
and $f(u(\cdot ,\tau ))\in L^{2}(\Omega )$, \eqref{dct} follows from
Dominated Convergence Theorem. Moreover, it holds%
\begin{align*}
\Vert \left[ Q_{\alpha ,\alpha }\left( t+h-\tau \right) -Q_{\alpha ,\alpha
}\left( t-\tau \right) \right] f(u(\cdot ,\tau ))\Vert _{V_{1/2}}& \leq
C(t-\tau )^{\frac{\alpha }{2}-1}\Vert u(\cdot ,\tau )\Vert _{V_{1/2}}^{r} \\
& \leq C(t-\tau )^{\frac{\alpha }{2}-1}(R^{\star })^{r}.
\end{align*}%
Since $\tau \mapsto C(t-\tau )^{\frac{\alpha }{2}-1}(R^{\star })^{r}$ is
integrable, then $I_{1}\rightarrow 0$ as $h\rightarrow 0^{+}$ by the
Dominated Convergence Theorem. On the other hand, observe that 
\begin{equation*}
\Vert Q_{\alpha ,\alpha }\left( t+h-\tau \right) f(u(\cdot ,\tau ))\Vert
_{V_{1/2}}\leq C(t+h-\tau )^{\frac{\alpha }{2}-1}(R^{\star })^{r}
\end{equation*}%
implies 
\begin{equation*}
I_{2}\leq C(R^{\star })^{r}\int_{t}^{t+h}(t+h-\tau )^{\alpha -2}d\tau
=C(R^{\star })^{r}h^{\alpha -1}\rightarrow 0,\,\,h\rightarrow 0^{+}.
\end{equation*}%
Thus, the claim $\Phi (u)\in C([0,T^{\star }];V_{1/2})$ follows. Similarly,
we have that there is a constant $C>0,$ such that for every $t\in \lbrack
0,T^{\star }]$, 
\begin{align}
\Vert \Phi (u)^{\prime }(t)\Vert _{L^{2}(\Omega )}\leq & C\left( t^{\frac{%
\alpha }{2}-1}\Vert u_{0}\Vert _{V_{1/2}}+\Vert u_{1}\Vert _{L^{2}(\Omega
)}+(T^{\star })^{\alpha -1}\left\Vert u\right\Vert _{C\left( \left[
0,T^{\ast }\right] ;V_{1/2}\right) }^{r}\right)  \notag \\
\leq & C\left( t^{\frac{\alpha }{2}-1}\Vert u_{0}\Vert _{V_{1/2}}+\Vert
u_{1}\Vert _{L^{2}(\Omega )}+(T^{\star })^{\alpha -1}\left( R^{\star
}\right) ^{r}\right) . \label{C3-1}
\end{align}%
From here%
\begin{align}
t^{s}\Vert \Phi (u)^{\prime }(t)\Vert _{L^{2}(\Omega )}\leq & C\left( t^{s+%
\frac{\alpha }{2}-1}\Vert u_{0}\Vert _{V_{1/2}}+t^{s}\Vert u_{1}\Vert
_{L^{2}(\Omega )}+t^{s}(T^{\star })^{\alpha -1}\left\Vert u\right\Vert
_{C\left( \left[ 0,T^{\ast }\right] ;V_{1/2}\right) }^{r}\right) \notag\\
\leq & C\left( (T^{\star })^{s+\frac{\alpha }{2}-1}\Vert u_{0}\Vert
_{V_{1/2}}+(T^{\star })^{s}\Vert u_{1}\Vert _{L^{2}(\Omega )}+(T^{\star
})^{s+\alpha -1}\left( R^{\star }\right) ^{r}\right) .  \label{C-3}
\end{align}%
Clearly, $\Phi ^{\prime }(u)\in C((0,T^{\star }];L^{2}(\Omega ))$. Indeed,
since $t\mapsto m(\xi )t^{\alpha -1}E_{\alpha ,\alpha }(-m(\xi )t^{\alpha
})u_{0}(\xi )$ is continuous for every $t\in \lbrack 0,T^{\star }]$, 
\begin{align*}
& |m(\xi )(t+h)^{\alpha -1}E_{\alpha ,\alpha }(-m(\xi )(t+h)^{\alpha
})u_{0}(\xi )-m(\xi )t^{\alpha -1}E_{\alpha ,\alpha }(-m(\xi )t^{\alpha
})u_{0}(\xi )| \\
& \leq Ct^{\frac{\alpha }{2}-1}|(m(\xi ))^{1/2}u_{0}(\xi )|,
\end{align*}%
and $u_{0}\in V_{1/2}$, we have that by the Dominated Convergence Theorem
that  
$$m(\cdot )t^{\alpha -1}E_{\alpha ,\alpha }(-m(\xi )t^{\alpha
})u_{0}(\cdot )\in C([0,T^{\star }];L^{2}(\Omega )).$$
Analogously,
it can be shown that $E_{\alpha ,1}(-m(\cdot )t^{\alpha })u_{1}(\cdot )$
belongs to $C([0,T^{\star }];L^{2}(\Omega ))$. Notice that 
\begin{equation*}
\lim_{h\rightarrow 0^{+}}\left\vert \left( Q_{\alpha ,\alpha -1}(t+h-\tau
)-Q_{\alpha ,\alpha -1}(t-\tau )\right) f(u(\xi ,\tau ))\right\vert ^{2}=0,
\end{equation*}%
for a.e. $t>0,$ and%
\begin{align*}
& \left\vert \left( Q_{\alpha ,\alpha -1}(t+h-\tau )-Q_{\alpha ,\alpha
-1}(t-\tau )\right) f(u(\xi ,\tau ))\right\vert ^{2} \\
& \leq C^{2}(t-\tau )^{2(\alpha -2)}|f(u(\xi ,\tau ))|^{2}\in L^{1}(\Omega ).
\end{align*}%
Owing to \eqref{C1}, we then have%
\begin{equation*}
\lim_{h\rightarrow 0^{+}}\left\Vert [Q_{\alpha ,\alpha -1}(t+h-\tau
)-Q_{\alpha ,\alpha -1}(t-\tau )]f(u(\xi ,\tau ))\right\Vert _{L^{2}(\Omega
)}=0.
\end{equation*}%
Furthermore, 
\begin{align*}
& \left\Vert [Q_{\alpha ,\alpha -1}(t+h-\tau )-Q_{\alpha ,\alpha -1}(t-\tau
)]f(u(\xi ,\tau ))\right\Vert _{L^{2}(\Omega )} \\
& \leq C(t-\tau )^{\alpha -2}\Vert u(\cdot ,\tau )\Vert _{L^{2r}(\Omega
)}^{r}\in L^{1}(0,T).
\end{align*}%
The Dominated Convergence Theorem implies that 
\begin{equation*}
J_{1}:=\int_{0}^{t}\left\Vert [Q_{\alpha ,\alpha -1}(t+h-\tau )-Q_{\alpha
,\alpha -1}(t-\tau )]f(u(\xi ,\tau ))\right\Vert _{L^{2}(\Omega )}d\tau
\end{equation*}%
goes to zero as $h\rightarrow 0^{+}$. As before, we can prove that 
\begin{equation*}
J_{2}=\int_{t}^{t+h}\Vert (t+h-\tau )^{\alpha -2}E_{\alpha ,\alpha
-1}(-m(\cdot )(t+h-\tau )^{\alpha })f(u(\cdot ,\tau ))\Vert _{L^{2}(\Omega
)}d\tau \rightarrow 0,\quad h\rightarrow 0^{+}.
\end{equation*}%
Therefore, 
\begin{align*}
&\Big \Vert \int_{0}^{t+h}\left( Q_{\alpha ,\alpha -1}(t+h-\tau )-Q_{\alpha
,\alpha -1}(t-\tau )\right) f(u(\cdot ,\tau ))d\tau\Big \Vert _{L^{2}(\Omega )}
\\
& =:J_{1}+J_{2}\rightarrow 0,\quad h\rightarrow 0^{+}.
\end{align*}%
Henceforth, $\Phi ^{\prime }(u)\in C((0,T^{\star }];L^{2}(\Omega ))$. Next,
by \eqref{C3-1} and using the formula 
\begin{equation*}
\int_{0}^{t}g_{2-\alpha }(t-\sigma )\sigma ^{p}\,d\sigma =\frac{%
t^{p+2-\alpha }}{\Gamma (2-\alpha )}\mathbf{B}(p+1,2-\alpha ),
\end{equation*}%
where $\mathbf{B}(a,b)=\int_{0}^{1}\tau ^{a-1}(1-\tau )^{b-1}\,d\tau $ is
the standard Beta function for $a,b>0$, we obtain 
\begin{align*}
& g_{2-\alpha }\ast \Vert \partial _{t}\Phi \left( u\right) \Vert
_{L^{2}(\Omega )}^{2}(t) \\
& \leq C^{2}\int_{0}^{t}g_{2-\alpha }(t-\sigma )\left[ \sigma ^{\frac{\alpha 
}{2}-1}\Vert u_{0}(\cdot )\Vert _{V_{1/2}}+\Vert u_{1}(\cdot )\Vert
_{L^{2}(\Omega )}+(T^{\star })^{\alpha -1}(R^{\star })^{r}\right] ^{2}d\sigma
\\
& =C^{2}\left[ \mathbf{B}\left( \alpha -1,2-\alpha \right) \Vert u_{0}(\cdot
)\Vert _{V_{1/2}}^{2}\right. \\
& +\left. \mathbf{B}\left( \frac{\alpha }{2},2-\alpha \right) t^{1-\frac{%
\alpha }{2}}\Vert u_{0}(\cdot )\Vert _{V_{1/2}}\left( \Vert u_{1}(\cdot
)\Vert _{L^{2}(\Omega )}+(T^{\star })^{\alpha -1}(R^{\star })^{r}\right)
\right. \\
& +\left. t^{2-\alpha }\left( \Vert u_{1}(\cdot )\Vert _{L^{2}(\Omega
)}^{2}+(T^{\star })^{2(\alpha -1)}(R^{\star })^{2r}+\Vert u_{1}(\cdot )\Vert
_{L^{2}(\Omega )}(T^{\star })^{\alpha -1}(R^{\star })^{r}\right) \right] \\
& =C^{2}\left( \Vert u_{0}(\cdot )\Vert _{V_{1/2}}^{2}+t^{2-\alpha }\Vert
u_{1}(\cdot )\Vert _{L^{2}(\Omega )}^{2}+t^{2-\alpha }(T^{\star })^{2(\alpha
-1)}(R^{\star })^{2r}\right. \\
& +\left. t^{1-\frac{\alpha }{2}}\Vert u_{0}(\cdot )\Vert _{V_{1/2}}\Vert
u_{1}(\cdot )\Vert _{L^{2}(\Omega )}+t^{1-\frac{\alpha }{2}}\Vert
u_{0}(\cdot )\Vert _{V_{1/2}}(T^{\star })^{\alpha -1}(R^{\star })^{r}\right.
\\
& +\left. t^{2-\alpha }\Vert u_{1}(\cdot )\Vert _{L^{2}(\Omega )}(T^{\star
})^{\alpha -1}(R^{\star })^{r}\right) .
\end{align*}%
Hence, 
\begin{align}
g_{2-\alpha }\ast \Vert \partial _{t}\Phi \left( u\right) \Vert
_{L^{2}(\Omega )}^{2}(t)& \leq C^{2}\left( \Vert u_{0}(\cdot )\Vert
_{V_{1/2}}^{2}+(T^{\star })^{2-\alpha }\Vert u_{1}(\cdot )\Vert
_{L^{2}(\Omega )}^{2}+(T^{\star })^{\alpha }(R^{\star })^{2r}\right.
\notag\\
& +\left. (T^{\star })^{1-\frac{\alpha }{2}}\Vert u_{0}(\cdot )\Vert
_{V_{1/2}}\Vert u_{1}(\cdot )\Vert _{L^{2}(\Omega )}+\Vert u_{0}(\cdot
)\Vert _{V_{1/2}}(T^{\star })^{\frac{\alpha }{2}}(R^{\star })^{r}\right. 
\notag \\
& +\left. \Vert u_{1}(\cdot )\Vert _{L^{2}(\Omega )}T^{\star }(R^{\star
})^{r}\right) . \label{C3-2} 
\end{align}%
It also follows from \eqref{C2}, \eqref{C3} and \eqref{C3-2} that%
\begin{align*}
\Vert \Phi \left( u\right) (\cdot ,t)\Vert _{s,T^{\star }}^{2} =&\Vert \Phi
\left( u\right) (\cdot ,t)\Vert _{V_{1/2}}^{2}+t^{2s}\Vert \partial _{t}\Phi
\left( u\right) (\cdot ,t)\Vert _{L^{2}(\Omega )}^{2}+g_{2-\alpha }\ast
\Vert \partial _{t}\Phi \left( u\right) (\xi ,\cdot )\Vert _{L^{2}(\Omega
)}^{2}(t) \\
 \leq &C^{2}\left( 2+(T^{\star })^{2s+\alpha -2})\Vert u_{0}\Vert
_{V_{1/2}}^{2}+(1+(T^{\star })^{2s}+(T^{\star })^{2-\alpha })\Vert
u_{1}\Vert _{L^{2}(\Omega )}^{2}\right. \\
& +\left. \left( (T^{\star })^{\alpha }+(T^{\star })^{2(s+\alpha -1)}\right)
(R^{\star })^{2r}\right. \\
& +\left. (1+(T^{\star })^{2s+\frac{\alpha }{2}-1}+(T^{\star })^{1-\frac{%
\alpha }{2}})\Vert u_{0}\Vert _{V_{1/2}}\Vert u_{1}\Vert _{L^{2}(\Omega
)}\right. \\
& +\left. \left( 2(T^{\star })^{\frac{\alpha }{2}}+(T^{\star })^{2s+\frac{%
3\alpha }{2}-2}\right) \Vert u_{0}\Vert _{V_{1/2}}(R^{\star })^{r}\right. \\
& +\left. \left( (T^{\star })^{\frac{\alpha }{2}}+(T^{\star })^{2s+\alpha
-1}+T^{\star }\right) \Vert u_{1}\Vert _{L^{2}(\Omega )}(R^{\star
})^{r}\right) .
\end{align*}%
Doing some computations, we arrive to 
\begin{align*}
\Vert \Phi \left( u\right) (\cdot ,t)\Vert _{s,T^{\star }}& \leq C\left(
(2+(T^{\star })^{2s+\alpha -2})^{1/2}\Vert u_{0}\Vert
_{V_{1/2}}+((1+(T^{\star })^{2s}+(T^{\star })^{2-\alpha })^{1/2}\Vert
u_{1}\Vert _{L^{2}(\Omega )}\right. \\
& +\left. ((T^{\star })^{\alpha }+(T^{\star })^{2(s+\alpha
-1)})^{1/2}(R^{\star })^{r}\right) .
\end{align*}%
Moreover, using that $\sqrt{x^{2}+y^{2}}\leq x+y$ for all $x,y\geq 0$, we
have that
\begin{align*}
\Vert \Phi \left( u\right) (\cdot ,t)\Vert _{s,T^{\star }} \leq &C(\Vert
u_{0}\Vert _{V_{1/2}}+\Vert u_{1}\Vert _{L^{2}(\Omega )}) \\
& +C\left( (T^{\star })^{s+\frac{\alpha }{2}-1}\Vert u_{0}\Vert
_{V_{1/2}}+((T^{\star })^{s}+(T^{\star })^{1-\frac{\alpha }{2}})\Vert
u_{1}\Vert _{L^{2}(\Omega )}\right. \\
& +\left. ((T^{\star })^{\frac{\alpha }{2}}+(T^{\star })^{s+\alpha
-1})(R^{\star })^{r}\right) \\
& \leq C(\Vert u_{0}\Vert _{V_{1/2}}+\Vert u_{1}\Vert _{L^{2}(\Omega )}) \\
& +C\max \{(T^{\star })^{s+\frac{\alpha }{2}-1},(T^{\star })^{s}+(T^{\star
})^{1-\frac{\alpha }{2}}\}\left( \Vert u_{0}\Vert _{V_{1/2}}+\Vert
u_{1}\Vert _{L^{2}(\Omega )}\right) \\
& +C\left( ((T^{\star })^{\frac{\alpha }{2}}+(T^{\star })^{s+\alpha
-1})(R^{\star })^{r}\right) .
\end{align*}%
Letting now 
\begin{equation*}
R^{\star }\geq 3C(\Vert u_{0}\Vert _{V_{1/2}}+\Vert u_{1}\Vert
_{L^{2}(\Omega )}),
\end{equation*}%
we can find a sufficiently small time $T^{\star }>0$ such that 
\begin{align*}
& C\max \{(T^{\star })^{s+\frac{\alpha }{2}-1},(T^{\star })^{s}+(T^{\star
})^{1-\frac{\alpha }{2}}\}\left( \Vert u_{0}\Vert _{V_{1/2}}+\Vert
u_{1}\Vert _{L^{2}(\Omega )}\right) \\
& +C\left( ((T^{\star })^{\frac{\alpha }{2}}+(T^{\star })^{s+\alpha
-1})(R^{\star })^{r}\right) \leq \frac{2R^{\star }}{3},
\end{align*}%
that is, 
\begin{equation}
\Vert \Phi \left( u\right) \Vert _{s,T^{\star }}\leq R^{\star },  \label{T1}
\end{equation}%
{\color{black} provided that $0<s<1$ is such that 
\begin{equation*}
s>1-\frac{\alpha }{2}.
\end{equation*}%
} In this case, it follows that $\Phi (u)\in \mathcal{B}_{s,T^{\star
},R^{\star }},$ for all $u\in \mathcal{B}_{s,T^{\star },R^{\star }}$.

Next, we show that by choosing a possibly smaller $T^{\star }>0$, $\Phi $ is
a contraction on $\mathcal{B}_{s,T^{\star },R^{\star }}$. Indeed, let $%
u,v\in \mathcal{B}_{s,T^{\star },R^{\star }}$. Using the mean value theorem
it is easy to show that%
\begin{equation}
\Vert f(u(\cdot ,t))-f(v(\cdot ,t))\Vert _{L^{2}(\Omega )}\leq C\Vert
u(\cdot ,t)-v(\cdot ,t)\Vert _{V_{1/2}}\left( \Vert u(\cdot ,t)\Vert
_{V_{1/2}}+\Vert v(\cdot ,t)\Vert _{V_{1/2}}\right) ^{r-1}.  \label{EQ.4.10}
\end{equation}%
Similarly to the foregoing estimates, we can exploit%
\begin{align*}
&\Vert \Phi (u)\left( t\right) -\Phi (v)\left( t\right) \Vert _{V_{1/2}}\\
\leq &C\int_{0}^{t}(t-\tau )^{\frac{\alpha }{2}-1}\Vert f(u(\cdot ,\tau
))-f(v(\cdot ,\tau ))\Vert _{L^{2}(\Omega )}d\tau \\
 \leq &C\int_{0}^{t}(t-\tau )^{\frac{\alpha }{2}-1}\Vert u(\cdot ,\tau
)-v(\cdot ,\tau )\Vert _{V_{1/2}}\left( \Vert u(\cdot ,\tau )\Vert
_{V_{1/2}}+\Vert v(\cdot ,\tau )\Vert _{V_{1/2}}\right) ^{r-1}d\tau \\
 \leq &C\Vert u-v\Vert _{C([0,T^{\star }];V_{1/2})}(R^{\star
})^{r-1}(T^{\star })^{\frac{\alpha }{2}}.
\end{align*}%
Analogously, we have 
\begin{equation*}
\Vert \Phi (u)^{\prime }\left( t\right) -\Phi (v)^{\prime }\left( t\right)
\Vert _{L^{2}(\Omega )}\leq C\Vert u-v\Vert _{C([0,T^{\star
}];V_{1/2})}(R^{\star })^{r-1}(T^{\star })^{\alpha -1},
\end{equation*}%
and 
\begin{equation*}
t^{s}\Vert \Phi (u)^{\prime }\left( t\right) -\Phi (v)^{\prime }\left(
t\right) \Vert _{L^{2}(\Omega )}\leq C\Vert u-v\Vert _{C([0,T^{\star
}];V_{1/2})}(R^{\star })^{r-1}(T^{\star })^{s+\alpha -1}.
\end{equation*}%
Finally,%
\begin{equation*}
g_{2-\alpha }\ast \Vert (\Phi (u)-\Phi (v))^{\prime }\Vert _{L^{2}(\Omega
)}^{2}(t)\leq C^{2}\Vert u-v\Vert _{C([0,T^{\star }];V_{1/2})}^{2}(R^{\star
})^{2(r-1)}(T^{\star })^{\alpha }.
\end{equation*}%
Then, for each $t\in \lbrack 0,T^{\star }],$ 
\begin{align*}
& \Vert \Phi (u)\left( t\right) -\Phi (v)\left( t\right) \Vert _{V_{\gamma
}}^{2}+t^{2s}\Vert \Phi (u)^{\prime }(t)-\Phi (v)^{\prime }(t)\Vert
_{L^{2}(\Omega )}^{2}+g_{2-\alpha }\ast \Vert (\Phi (u)-\Phi (v))^{\prime
}\Vert _{L^{2}(\Omega )}^{2}(t) \\
& \leq C^{2}(R^{\star })^{2(r-1)}\left( 2(T^{\star })^{\alpha }+(T^{\star
})^{2(s+\alpha -1)}\right) \Vert u-v\Vert _{C([0,T^{\star }];V_{1/2})}^{2}.
\end{align*}%
\newline
Namely, it holds%
\begin{equation*}
\Vert \Phi (u)-\Phi (v)\Vert _{s,T^{\star }}\leq C(R^{\star })^{r-1}\left(
2(T^{\star })^{\alpha }+(T^{\star })^{2(s+\alpha -1)}\right) ^{1/2}\Vert
u-v\Vert _{C([0,T^{\star }];V_{1/2})}.
\end{equation*}%
Thus, for a sufficiently small $T^{\ast }\leq 1,$ the previous estimate
shows that $\Phi $ is a contraction.

\textbf{Step 2 (The extension argument)}. Let 
\begin{equation}
S_{1}(t)u_{0}(\xi ):=E_{\alpha ,1}(-m(\xi )t^{\alpha })u_{0}(\xi )\;%
\mbox{
and }\;S_{2}(t)u_{1}(\xi ):=tE_{\alpha ,2}(-m(\xi )t^{\alpha })u_{1}(\xi ),
\label{S12}
\end{equation}%
and 
\begin{equation}
(S_{3}\ast f(u(\xi ,\cdot )))(t):=\int_{0}^{t}f(u(\cdot ,\tau ))(t-\tau
)^{\alpha -1}E_{\alpha ,\alpha }((-m(\cdot )(t-\tau )^{\alpha })d\tau
\label{S33}
\end{equation}%
so that 
\begin{equation*}
u(\xi ,t)=S_{1}(t)u_{0}(\xi )+S_{2}(t)u_{1}(\xi )+(S_{3}\ast f(u(\xi ,\cdot
)))(t),\quad \xi \in \Omega .
\end{equation*}%
Let also 
\begin{equation}
S_{1}^{\prime }(t)u_{0}(\cdot ):=-m(\cdot )t^{\alpha -1}E_{\alpha
,1}(-m(\cdot )t^{\alpha })u_{0}(\cdot )\;\mbox{
and }\;S_{2}^{\prime }(t)u_{1}(\cdot ):=E_{\alpha ,1}(-m(\cdot )t^{\alpha
})u_{1}(\cdot ),  \label{S12p}
\end{equation}%
and 
\begin{equation}
(S_{3}^{\prime }\ast f(u(\xi ,\cdot )))(t):=\int_{0}^{t}f(u(\cdot ,\tau
))(t-\tau )^{\alpha -2}E_{\alpha ,\alpha -1}(-m(\cdot )(t-\tau )^{\alpha
})d\tau  \label{S3p}
\end{equation}%
so that 
\begin{equation*}
\partial _{t}u(\xi ,t)=S_{1}^{\prime }(t)u_{0}(\xi )+S_{2}^{\prime
}(t)u_{1}(\xi )+(S_{3}^{\prime }\ast f(u(\xi ,\cdot )))(t),\quad \xi \in
\Omega .
\end{equation*}%
Let $T^{\star }>0$ be the time of the contraction argument. Fix $\tau >0$
and consider the space 
\begin{align*}
\mathcal{K}_{s,T^{\star },R}:=& \Big\{v\in C([0,T^{\star }+\tau
];V_{1/2})\cap C^{1}((0,T^{\star }+\tau ];L^{2}(\Omega ))\text{:} \\
& v(\cdot ,t)=u(\cdot ,t),\;\qquad \forall \;t\in \lbrack 0,T^{\star }], \\
& \Vert v(\cdot ,t)-u(\cdot ,T^{\star })\Vert _{V_{1/2}}^{2}+%
\textcolor{black}{(t-T^{\star})^{2s}}\Vert \partial _{t}v(\cdot ,t)-\partial
_{t}u(\cdot ,T^{\star })\Vert _{L^{2}(\Omega )}^{2} \\
& +g_{2-\alpha }\ast \Vert \partial _{t}v(\cdot ,t)-\partial _{t}u(\cdot
,T^{\star })\Vert _{L^{2}(\Omega )}^{2}\leq R^{2},\;\;\forall \;t\in \lbrack
T^{\star },T^{\star }+\tau ]\Big\}.
\end{align*}%
Define the mapping $\Phi $ on $\mathcal{K}_{s,T^{\star },R}$ by 
\begin{align}
\Phi (v)(\cdot ,t)=& E_{\alpha ,1}(-m(\cdot )t^{\alpha })u_{0}(\cdot
)+tE_{\alpha ,2}(-m(\cdot )t^{\alpha })u_{1}(\cdot )\   \label{AA1} \\
& +\int_{0}^{t}f(v(\cdot ,\tau ))(t-\tau )^{\alpha -1}E_{\alpha ,\alpha
}(-m(\cdot )(t-\tau )^{\alpha })d\tau .  \notag
\end{align}%
Note that $\mathcal{K}_{s,T^{\star },R}$ when endowed with the norm of $%
\Vert \cdot \Vert _{s,T^{\star }}$ is a closed subspace of $\mathcal{B}%
_{s,T^{\star },R}$. We show that $\Phi $ has a fixed point in $\mathcal{K}%
_{s,T^{\star },R}$. Since $f$ is continuously differentiable, we have that
the mapping $t\mapsto \Phi (v(\cdot ))(t)$ is continuously differentiable on 
$[0,T^{\star }+\tau ]$. We will show that by properly choosing $\tau ,R>0$, $%
\Phi :\mathcal{K}_{s,T^{\star },R}\rightarrow \mathcal{K}_{s,T^{\star },R}$
is a contraction mapping with respect to the metric induced by the norm $%
\Vert \cdot \Vert _{s,T^{\star }}$. The appropriate choice of $\tau ,R>0$
will be specified below. First, We show that $\Phi $ maps $\mathcal{K}%
_{s,T^{\star },R}$ into $\mathcal{K}_{s,T^{\star },R}$. Indeed, let $v\in 
\mathcal{K}_{s,T^{\star },R}$. If $t\in \lbrack 0,T^{\star }]$, then $%
v(\cdot ,t)=u(\cdot ,t)$. Hence $\Phi (v(\cdot ))(t)=\Phi (u(\cdot
))(t)=u(\cdot ,t)$ and there is nothing to prove. If $t\in \lbrack T^{\star
},T^{\star }+\tau ]$, then%
\begin{align*}
& \Vert \Phi (v)(\cdot ,t)-u(\cdot ,T^{\star })\Vert _{V_{1/2}} \\
\leq & \Vert S_{1}(t)u_{0}(\cdot )-S_{1}(T^{\star })u_{0}(\cdot )\Vert
_{V_{1/2}}+\Vert S_{2}(t)u_{1}(\cdot )-S_{2}(T^{\star })u_{1}(\cdot )\Vert
_{V_{1/2}} \\
& +\int_{T^{\star }}^{t}\left\Vert (t-\sigma )^{\alpha -1}E_{\alpha ,\alpha
}(-m(\cdot )(t-\sigma )^{\alpha })f(v(\cdot ,\sigma ))\right\Vert
_{V_{1/2}}d\sigma \\
& +\int_{0}^{T^{\star }}\left\Vert \left[ (t-\sigma )^{\alpha -1}-(T^{\star
}-\sigma )^{\alpha -1}\right] E_{\alpha ,\alpha }(-m(\cdot )(t-\sigma
)^{\alpha })f(u(\cdot ,\sigma ))\right\Vert _{V_{1/2}}d\sigma \\
& +\int_{0}^{T^{\star }}\left\Vert (T^{\star }-\sigma )^{\alpha -1}\left[
E_{\alpha ,\alpha }(-m(\cdot )(t-\sigma )^{\alpha })-E_{\alpha ,\alpha
}(-m(\cdot )(T^{\star }-\sigma )^{\alpha })\right] f(u(\cdot ,\sigma
))\right\Vert _{V_{1/2}}d\sigma \\
=& \mathcal{N}_{1}+\mathcal{N}_{2}+\mathcal{N}_{3}+\mathcal{N}_{4}.
\end{align*}%
According to the proof of the contraction argument, we have that for every $%
T\geq 0$, the mappings $t\mapsto S_{1}(t)u_{0}(\xi )$ and $t\mapsto
S_{2}(t)u_{1}(\xi )$ belong to $C([0,T],V_{1/2})$. Hence we can choose $\tau
>0$ small such that for $t\in \lbrack T^{\star },T^{\star }+\tau ]$, we have 
\begin{equation}
\mathcal{N}_{1}:=\Vert S_{1}(t)u_{0}(\cdot )-S_{1}(T^{\star })u_{0}(\cdot
)\Vert _{V_{1/2}}+\Vert S_{2}(t)u_{1}(\cdot )-S_{2}(T^{\star })u_{1}(\cdot
)\Vert _{V_{1/2}}\leq \frac{R}{4\sqrt{3}}.  \label{N1}
\end{equation}%
Proceeding as the proofs above, we can choose $\tau >0$ small such that for $%
t\in \lbrack T^{\star },T^{\star }+\tau ]$, we have 
\begin{align}
\mathcal{N}_{2}:=& \int_{T^{\star }}^{t}\left\Vert (t-\sigma )^{\alpha
-1}E_{\alpha ,\alpha }(-m(\cdot )(t-\sigma )^{\alpha })f(v(\cdot ,\sigma
))\right\Vert _{V_{1/2}}d\sigma  \label{N2} \\
\leq & C\tau ^{\frac{\alpha }{2}}\left( \Vert v(\cdot ,t)\Vert
_{V_{1/2}}^{r}\right) \leq 2C\tau ^{\frac{\alpha }{2}}(R^{\star })^{r}\leq 
\frac{R}{4\sqrt{3}}.  \notag
\end{align}%
For the third norm we have that 
\begin{equation}
\mathcal{N}_{3}:=\int_{0}^{T^{\star }}\left\Vert \left[ (t-\sigma )^{\alpha
-1}-(T^{\star }-\sigma )^{\alpha -1}\right] E_{\alpha ,\alpha }(-m(\cdot
)(t-\sigma )^{\alpha })f(u(\cdot ,\sigma ))\right\Vert _{V_{1/2}}d\sigma .
\label{N3}
\end{equation}%
Since 
\begin{equation*}
\lim_{t\rightarrow T^{\star }}\left\vert m(\xi )^{1/2}\left[ (t-\sigma
)^{\alpha -1}-(T^{\star }-\sigma )^{\alpha -1}\right] E_{\alpha ,\alpha
}(-m(\xi )(t-\sigma )^{\alpha })f(u(\xi ,\sigma )\right\vert =0
\end{equation*}%
and%
\begin{align*}
& |(m(\xi )^{1/2}\left[ (t-\sigma )^{\alpha -1}-(T^{\star }-\sigma )^{\alpha
-1}\right] E_{\alpha ,\alpha }(-m(\xi )(t-\sigma )^{\alpha })f(u(\xi ,\sigma
))| \\
& \leq C(T^{\star }-\sigma )^{\frac{\alpha }{2}-1}|f(u(\xi ,\sigma ))|,
\end{align*}%
owing to (\textbf{Hf2}), then 
\begin{equation*}
\left\Vert \left[ (t-s)^{\alpha -1}-(T^{\star }-s)^{\alpha -1}\right]
E_{\alpha ,\alpha }(-m(\cdot )(t-s)^{\alpha })f(u(\cdot ,s))\right\Vert
_{V_{1/2}}\;\rightarrow 0\;\mbox{ as }\;t\rightarrow T^{\star }
\end{equation*}%
by the Dominated Convergence Theorem. This implies that we can choose $\tau
>0$ small enough, such that for $t\in \lbrack T^{\star },T^{\star }+\tau ]$, 
\begin{equation}
\mathcal{N}_{3}=\int_{0}^{T^{\star }}\left\Vert \left[ (t-s)^{\alpha
-1}-(T^{\star }-s)^{\alpha -1}\right] E_{\alpha ,\alpha }(-m(\cdot
)(t-s)^{\alpha })f(u(\cdot ,s))\right\Vert _{V_{1/2}}ds\leq \frac{R}{4\sqrt{3%
}}.  \label{N-3-2}
\end{equation}%
With the same argument as for $\mathcal{N}_{3}$, we can choose $\tau >0$
small such that for every $t\in \lbrack T^{\star },T^{\star }+\tau ]$ we have%
\begin{align}
\mathcal{N}_{4}:=\int_{0}^{T^{\star }}& \left\Vert (T^{\star }-\sigma
)^{\alpha -1}\left[ E_{\alpha ,\alpha }(-m(\cdot )(t-\sigma )^{\alpha
})-E_{\alpha ,\alpha }(-m(\cdot )(T^{\star }-\sigma )^{\alpha })\right]
f(u(\cdot ,\sigma ))\right\Vert _{V_{1/2}}d\sigma  \notag \\
\leq & \frac{R}{4\sqrt{3}}.  \label{N4}
\end{align}%
Thus, for $\tau >0$ small enough, we get 
\begin{equation*}
\Vert \Phi (v)(\cdot ,t)-u(\cdot ,T^{\star })\Vert _{V_{1/2}}^{2}\leq \frac{%
R^{2}}{3}.
\end{equation*}%
For the time derivative, we also have that 
\begin{align*}
& \Vert \Phi (v(\cdot ))^{\prime }(t)-\partial _{t}u(\cdot ,T^{\star })\Vert
_{L^{2}(\Omega )} \\
\leq & \Vert S_{1}^{\prime }(t)u_{0}(\cdot )-S_{1}^{\prime }(T^{\star
})u_{0}(\cdot )\Vert _{L^{2}(\Omega )}+\Vert S_{2}^{\prime }(t)u_{1}(\cdot
)-S_{2}^{\prime }(T^{\star })u_{1}(\cdot )\Vert _{L^{2}(\Omega )} \\
& +\int_{T^{\star }}^{t}\left\Vert (t-\sigma )^{\alpha -2}E_{\alpha ,\alpha
-1}(-m(\cdot )(t-\sigma )^{\alpha })f(v(\cdot ,\sigma ))\right\Vert
_{L^{2}(\Omega )}d\sigma \\
& +\int_{0}^{T^{\star }}\left\Vert \left[ (t-\sigma )^{\alpha -2}-(T^{\star
}-\sigma )^{\alpha -2}\right] E_{\alpha ,\alpha -2}(-m(\cdot )(t-\sigma
)^{\alpha })f(u(\cdot ,\sigma ))\right\Vert _{L^{2}(\Omega )}d\sigma \\
& +\int_{0}^{T^{\star }}\left\Vert (T^{\star }-\sigma )^{\alpha -2}\left[
E_{\alpha ,\alpha -1}(-m(\cdot )(t-\sigma )^{\alpha })-E_{\alpha ,\alpha
-1}(-m(\cdot )(T^{\star }-\sigma )^{\alpha })\right] f(u(\cdot ,\sigma
))\right\Vert _{L^{2}(\Omega )}d\sigma \\
=& \mathcal{M}_{1}+\mathcal{M}_{2}+\mathcal{M}_{3}+\mathcal{M}_{4}.
\end{align*}%
Using the same argument as the corresponding terms above, we can choose $%
\tau \in \left( 0,1\right) $ small such that for every $t\in \lbrack
T^{\star },T^{\star }+\tau ]$, 
\begin{equation}
\mathcal{M}_{1}:=\Vert S_{1}^{\prime }(t)u_{0}(\cdot )-S_{1}^{\prime
}(T^{\star })u_{0}(\cdot )\Vert _{V_{1/2}}+\textcolor{black}{\tau^s}\Vert
S_{2}^{\prime }(t)u_{1}(\cdot )-S_{2}^{\prime }(T^{\star })u_{1}(\cdot
)\Vert _{L^{2}(\Omega )}\leq \frac{R}{4\sqrt{3}},  \label{M1}
\end{equation}%
and 
\begin{equation}
\mathcal{M}_{2}:=\int_{T^{\star }}^{t}\left\Vert (t-\sigma )^{\alpha
-2}E_{\alpha ,\alpha -1}(-m(\cdot )(t-\sigma )^{\alpha })f(v(\cdot ,\sigma
))\right\Vert _{L^{2}(\Omega )}d\sigma \leq \frac{R}{4\sqrt{3}},  \label{M2}
\end{equation}%
and 
\begin{equation}
\mathcal{M}_{3}:=\int_{0}^{T^{\star }}\left\Vert \left[ (t-\sigma )^{\alpha
-2}-(T^{\star }-\sigma )^{\alpha -2}\right] E_{\alpha ,\alpha -2}(-m(\cdot
)(t-\sigma )^{\alpha })f(u(\cdot ,\sigma ))\right\Vert _{L^{2}(\Omega
)}d\sigma \leq \frac{R}{4\sqrt{3}},  \label{M3}
\end{equation}%
and 
\begin{align}
\mathcal{M}_{4}:=\int_{0}^{T^{\star }}& \left\Vert (T^{\star }-\sigma
)^{\alpha -2}\left[ E_{\alpha ,\alpha -1}(-m(\cdot )(t-\sigma )^{\alpha
})-E_{\alpha ,\alpha -1}(-m(\cdot )(T^{\star }-\sigma )^{\alpha })\right]
f(u(\cdot ,\sigma ))\right\Vert _{L^{2}(\Omega )}d\sigma  \notag \\
\leq & \frac{R}{4\sqrt{3}}.  \label{M4}
\end{align}%
Inequalities \eqref{M1}-\eqref{M4} imply that 
\begin{equation}
\Vert \Phi (v(\cdot ))^{\prime }(t)-\partial _{t}u(\cdot ,T^{\star })\Vert
_{L^{2}(\Omega )}^{2}\leq \frac{R^{2}}{3},  \label{M5-0}
\end{equation}%
so that 
\begin{equation}
\textcolor{black}{(t-T^{\star})^{2s}}\Vert \Phi (v(\cdot ))^{\prime
}(t)-\partial _{t}u(\cdot ,T^{\star })\Vert _{L^{2}(\Omega )}^{2}\leq %
\textcolor{black}{\tau^{2s}}\frac{R^{2}}{3}\leq \frac{R^{2}}{3}.  \label{N5}
\end{equation}%
Next, using \eqref{M5-0}, we get 
\begin{equation*}
\left( g_{2-\alpha }\ast |\Vert \Phi (v(\cdot ))^{\prime }(t)-\partial
_{t}u(\cdot ,T^{\star })\Vert _{L^{2}(\Omega )}\Vert ^{2}\right) (t)\leq 
\frac{R^{2}}{3}\tau ^{2-\alpha }\leq \frac{R^{2}}{3}.
\end{equation*}%
It follows from \eqref{N5}-\eqref{M5-0} that there exists $\tau >0$ small such
that for every $t\in \lbrack T^{\star },T^{\star }+\tau ]$, 
\begin{align*}
& \Vert \Phi (v)(\cdot ,t)-u(\cdot ,T^{\star })\Vert _{V_{1/2}}^{2}+%
\textcolor{black}{(t-T^{\star})^{2s}}\Vert \Phi (v)^{\prime }(\cdot
,t)-\partial _{t}u(\cdot ,T^{\star })\Vert _{L^{2}(\Omega )}^{2} \\
& +g_{2-\alpha }\ast \Vert \Phi (v(\cdot ))^{\prime }(t)-\partial
_{t}u(\cdot ,T^{\star })\Vert _{L^{2}(\Omega )}^{2}(t)\leq R^{2}.
\end{align*}%
We have shown that $\Phi $ maps $\mathcal{K}_{s,T^{\star },R}$ into $%
\mathcal{K}_{s,T^{\star },R}$. Next, we show that $\Phi $ is a contraction
on $\mathcal{K}_{s,T^{\star },R}$. Let $v,w\in \mathcal{K}_{s,T^{\star },R}$%
. Then 
\begin{equation*}
\Phi (v(\cdot ))(t)-\Phi (w(\cdot ))(t)=\int_{0}^{t}(f(v(\cdot
,s))-f(w(\cdot ,s)))(t-s)^{\alpha -1}E_{\alpha ,\alpha }(-m(\cdot
)(t-s)^{\alpha })ds.
\end{equation*}%
If $t\in \lbrack 0,T^{\star }]$, then it follows that%
\begin{equation*}
\Vert \Phi (v)-\Phi (w)\Vert _{s,T^{\star }}\leq C(R^{\star })^{r-1}\left(
(T^{\star })^{\alpha }+(T^{\star })^{2(s+\alpha -1)}+(T^{\star })^{\alpha
+2}\right) ^{1/2}\Vert v-w\Vert _{C([0,T^{\star }];V_{1/2})}.
\end{equation*}%
If $t\in \lbrack T^{\star },T^{\star }+\tau ]$, then proceeding as in the
contraction estimates, we get that there is a constant $C>0$ such that%
\begin{align*}
& \Vert \Phi (v)\left( t\right) -\Phi (w)\left( t\right) \Vert _{V_{1/2}} \\
& \leq C\int_{T^{\star }}^{T^{\star }+\tau }(t-\tau )^{\frac{\alpha }{2}%
-1}\Vert v(\cdot ,\tau )-w(\cdot ,\tau )\Vert _{V_{1/2}}\left( \Vert v(\cdot
,\tau )\Vert _{V_{1/2}}+\Vert w(\cdot ,\tau )\Vert _{V_{1/2}}\right)
^{r-1}d\tau \\
& \leq C\Vert v-w\Vert _{C([T^{\star },T^{\star }+\tau
];V_{1/2})}R^{r-1}\tau ^{\frac{\alpha }{2}}.
\end{align*}%
In a similar way, we have that there is a constant $C>0$ such that 
\begin{equation}\label{EE2}
\Vert \Phi (v)^{\prime }(\cdot ,t)-\Phi (w)^{\prime }(\cdot ,t)\Vert
_{L^{2}(\Omega )}\leq C\Vert v-w\Vert _{C([T^{\star },T^{\star }+\tau
];V_{1/2})}R^{r-1}\tau ^{\alpha -1}.
\end{equation}%
This in turn implies that 
\begin{equation*}
\textcolor{black}{(t-T^{\star})^{s}}\Vert \Phi (v)^{\prime }(\cdot ,t)-\Phi
(w)^{\prime }(\cdot ,t)\Vert _{L^{2}(\Omega )}\leq C\Vert v-w\Vert
_{C([T^{\star },T^{\star }+\tau ];V_{1/2})}R^{r-1}\tau ^{\textcolor{black}{s}%
+\alpha -1}.
\end{equation*}%
Next, by \eqref{EE2}, we get 
\begin{equation*}
g_{2-\alpha }\ast \Vert (\Phi (v)-\Phi (w))^{\prime }\Vert _{L^{2}(\Omega
)}^{2}(t)\leq C\Vert v-w\Vert _{C([T^{\star },T^{\star }+\tau
];V_{1/2})}^{2}R^{2(r-1)}\tau ^{\alpha }.
\end{equation*}%
It follows from the preceding estimates that there is a constant $C>0$ such
that 
\begin{equation*}
\Vert \Phi (v)-\Phi (w)\Vert _{s,T^{\ast }}\leq CR^{r-1}(2\tau ^{\alpha
}+\tau ^{2(\textcolor{black}{s}+\alpha -1)})^{1/2}\Vert v-w\Vert _{s,T^{\ast
}}.
\end{equation*}%
Then choosing $\tau >0$ small enough so that%
\begin{equation*}
CR^{r-1}(2\tau ^{\alpha }+\tau ^{2(\textcolor{black}{s}+\alpha -1)})^{1/2}<1,
\end{equation*}%
we deduce once again that $\Phi $ is a contraction on $\mathcal{K}%
_{s,T^{\star },R}$. Hence, $\Phi $ has a unique fixed point $v$ on $\mathcal{%
K}_{s,T^{\star },R}$. This completes the extension argument.

\textbf{Step 3 (Blowup-vs Global solution).}To complete the proof of the
main theorem we need the following statement whose proof is standard and is
omitted for the sake of brevity.

\begin{lemma}
\label{lem-34} Let $T\in (0,\infty )$ and $u:\mathcal{H}\times \lbrack
0,T)\rightarrow L^{2}(\Omega )$ be such that $u(\xi ,\cdot )$ is
continuously differentiable for a.e. $\xi \in \mathcal{H}$ with 
\begin{equation*}
\sup_{t\in \lbrack 0,T)}\left( \Vert u(\cdot ,t)\Vert
_{V_{1/2}}^{2}+t^{2s}\Vert \partial _{t}u(\cdot ,t)\Vert _{L^{2}(\Omega
)}^{2}+g_{2-\alpha }\ast \Vert \partial _{t}u(\cdot ,t)\Vert _{L^{2}(\Omega
)}^{2}(t)\right) <\infty .
\end{equation*}
Let%
\begin{equation*}
\mathbb{E}(\cdot ,t):=t^{\alpha -1}E_{\alpha ,\alpha }(-m(\cdot )t^{\alpha
})\;\mbox{
and }\;\mathbb{E}^{\prime }(\cdot ,t):=t^{\alpha -2}E_{\alpha ,\alpha
-1}(-m(\cdot )t^{\alpha }).
\end{equation*}%
Let $t_{n}\in \lbrack 0,T)$ be a sequence such that $\lim_{n\rightarrow
\infty }t_{n}=T$. Then 
\begin{equation}
\lim_{n\rightarrow \infty }\int_{0}^{t_{n}}\left\Vert \left[ \mathbb{E}%
(t_{n}-\sigma )-\mathbb{E}(T-\sigma )\right] f(u(\cdot ,\sigma ))\right\Vert
_{V_{1/2}}d\sigma =0,  \label{331}
\end{equation}%
and 
\begin{equation}
\lim_{n\rightarrow \infty }\int_{0}^{t_{n}}\left\Vert \left[ \mathbb{E}%
^{\prime }(t_{n}-\sigma )-\mathbb{E}^{\prime }(T-\sigma )\right] f(u(\cdot
,\sigma ))\right\Vert _{L^{2}(\Omega )}d\sigma =0.  \label{332}
\end{equation}
\end{lemma}

Let now 
\begin{equation*}
\mathcal{T}:=\Big\{T\in \lbrack 0,\infty ):\;\exists \;u:[0,T]\rightarrow
L^{2}(\Omega )\;\mbox{
unique local solution to \eqref{EQ-NL} in }\;[0,T]\Big\},
\end{equation*}%
and set $T_{\max }:=\sup \mathcal{T}$. Then we have a continuously
differentiable function (in the second variable) $u:\Omega\times
\lbrack 0,T_{\max })\rightarrow L^{2}(\Omega)$ which is the local
solution to \eqref{EQ-NL} on $[0,T_{\max })$. If $T_{\max }=\infty $, then $%
u $ is a global solution. Now if $T_{\max }<\infty $ we shall show that we
have \eqref{t-maximal}. Assume that there exists $K_{0}<\infty $ such that 
\begin{equation}
\Vert u(\cdot ,t)\Vert _{V_{1/2}}^{2}+t^{2s}\Vert \partial _{t}u(\cdot
,t)\Vert _{L^{2}(\Omega )}^{2}+g_{2-\alpha }\ast \Vert \partial _{t}u(\cdot
,t)\Vert _{L^{2}(\Omega )}^{2}(t)\leq K_{0}^{2},\;\;\forall \;t\in \lbrack
0,T_{\max }).  \label{AS-bd}
\end{equation}%
Let $(t_{n})\subset \lbrack 0,T_{\max })$ be a sequence that converges to $%
T_{\max }$. Let $t_{n}>t_{m}$ and 
\begin{equation*}
K:=\sup_{t\in \lbrack 0,T_{\max })}\Vert f(u(\cdot ,t))\Vert _{L^{2}(\Omega
)}.
\end{equation*}%
Then using the assumption \eqref{AS-bd}, we get from Lemma \ref{lem-34} that 
\begin{equation*}
\left\Vert \int_{t_{m}}^{t_{n}}\mathbb{E}(T_{\max }-\sigma )f(u(\cdot
,\sigma ))d\sigma \right\Vert _{V_{1/2}}\leq \frac{2CK}{\alpha }\left[
(T_{\max }-t_{n})^{\frac{\alpha }{2}}-(T_{\max }-t_{m})^{\frac{\alpha }{2}}%
\right] \rightarrow 0,
\end{equation*}%
$\mbox{as }n,m\rightarrow \infty .$

We use the notations of $S_{j}$ and $S_{j}^{\prime }$ ($j=1,2,3$) given in %
\eqref{S12}- \eqref{S3p}. Then, by Lemma \ref{lem-34} we get 
\begin{equation*}
\Vert u(\cdot ,t_{n})-u(\cdot ,t_{m})\Vert _{V_{1/2}}\rightarrow 0\quad %
\mbox{as }n,m\rightarrow \infty ,
\end{equation*}%
Analogously, for $t_{n}>t_{m}$ we have that 
\begin{equation*}
\left\Vert \int_{t_{m}}^{t_{n}}\mathbb{E}^{\prime }(T_{\max }-\sigma
)f(u(\cdot ,\sigma ))d\sigma \right\Vert _{L^{2}(\Omega )}\leq \frac{CK}{%
\alpha -1}\left[ (T_{\max }-t_{n})^{\alpha -1}-(T_{\max }-t_{m})^{\alpha -1}%
\right] \rightarrow 0,
\end{equation*}%
$\mbox{as }n,m\rightarrow \infty $. Thus, by Lemma \ref{lem-34}, we obtain%
\begin{equation*}
\Vert \partial _{t}u(\cdot ,t_{n})-\partial _{t}u(\cdot ,t_{m})\Vert
_{L^{2}(\Omega )}\rightarrow 0,
\end{equation*}%
as $n,m\rightarrow \infty $. In particular, 
\begin{equation*}
t^{s}\Vert \partial _{t}u(\cdot ,t_{n})-\partial _{t}u(\cdot ,t_{m})\Vert
_{L^{2}(\Omega )}\rightarrow 0,\quad \mbox{as}\quad m,n\rightarrow \infty .
\end{equation*}%
Next, for $\sigma <t_{n}<T_{\max }$,%
\begin{align*}
\Vert \partial _{\sigma }u(\cdot ,\sigma )\Vert _{L^{2}(\Omega )}& \leq
C\left( \sigma ^{\frac{\alpha }{2}-1}\Vert u_{0}\Vert _{V_{1/2}}+\Vert
u_{1}\Vert _{L^{2}(\Omega )}+K\sigma ^{\alpha -1}\right) \\
& \leq C\left( \sigma ^{\frac{\alpha }{2}-1}\Vert u_{0}\Vert
_{V_{1/2}}+\Vert u_{1}\Vert _{L^{2}(\Omega )}+K(T_{\max })^{\alpha
-1}\right) .
\end{align*}%
Proceeding in a similar way as in \eqref{C3-2} (substituting $T^{\star }$ by 
$T_{\max }$ and $(R^{\star })^{r}$ by $K$), we have 
\begin{align*}
g_{2-\alpha }\ast \Vert \partial _{t}u\Vert _{L^{2}(\Omega )}^{2}(t)& \leq
C^{2}\left( \Vert u_{0}(\cdot )\Vert _{V_{1/2}}^{2}+t^{2-\alpha }\Vert
u_{1}(\cdot )\Vert _{L^{2}(\Omega )}^{2}+t^{2-\alpha }(T_{\max })^{2(\alpha
-1)}K^{2}\right. \\
& +\left. t^{1-\frac{\alpha }{2}}\Vert u_{0}(\cdot )\Vert _{V_{1/2}}\Vert
u_{1}(\cdot )\Vert _{L^{2}(\Omega )}+t^{1-\frac{\alpha }{2}}\Vert
u_{0}(\cdot )\Vert _{V_{1/2}}(T_{\max })^{\alpha -1}K\right. \\
& +\left. t^{2-\alpha }\Vert u_{1}(\cdot )\Vert _{L^{2}(\Omega )}(T_{\max
})^{\alpha -1}K\right) .
\end{align*}%
This in turn implies 
\begin{align*}
& g_{2-\alpha }\ast \Vert \partial _{t}u\Vert _{L^{2}(\Omega
)}^{2}(t_{n})-g_{2-\alpha }\ast \Vert \partial _{t}u\Vert _{L^{2}(\Omega
)}^{2}(t_{m}) \\
& \leq (t_{n}^{2-\alpha }-t_{m}^{2-\alpha })\left( \Vert u_{1}(\cdot )\Vert
_{L^{2}(\Omega )}^{2}+(T_{\max })^{2(\alpha -1)}K^{2}+\Vert u_{1}(\cdot
)\Vert _{L^{2}(\Omega )}(T_{\max })^{\alpha -1}K\right) \\
& +(t_{n}^{1-\frac{\alpha }{2}}-t_{m}^{1-\frac{\alpha }{2}})\left( \Vert
u_{0}(\cdot )\Vert _{V_{1/2}}\Vert u_{1}(\cdot )\Vert _{L^{2}(\Omega
)}+\Vert u_{0}(\cdot )\Vert _{V_{1/2}}(T_{\max })^{\alpha -1}K\right)
\rightarrow 0\quad \mbox{as}\quad m,n\rightarrow \infty .
\end{align*}%
It follows that $(u(\cdot ,t_{n}))_{n\in {\mathbb{N}}}$, $(\partial
_{t}u(\cdot ,t_{n}))_{n\in {\mathbb{N}}}$ and $g_{2-\alpha }\ast \Vert
\partial _{t}u\Vert _{L^{2}(\Omega )}^{2}(t_{n})$ are Cauchy sequences, and
therefore, they have limits $u_{T_{\max }}\left( \cdot ,t\right) $, $%
\partial _{t}u_{T_{\max }}\left( \cdot ,t\right) $ and $g_{2-\alpha }\ast
\Vert \partial _{t}u_{T_{\max }}\Vert _{L^{2}(\Omega )}^{2}$ such that $%
u_{T_{\max }}\left( \cdot ,t\right) \in V_{1/2}$ and $\partial
_{t}u_{T_{\max }}\left( \cdot ,t\right) \in L^{2}(\Omega )$. Thus, we can
extend $u$ over $[0,T_{\max }]$ to obtain the equality 
\begin{equation*}
u(\xi ,t)={S}_{1}(t)u_{0}(\xi )+{S}_{2}(t)u_{1}(\xi )+({S}_{3}\ast f(u(\xi
,\cdot ))(t),
\end{equation*}%
for all $t\in \lbrack 0,T_{\max }]$. By the extension argument above, we can
extend the solution to some larger interval. This is a contradiction with
the definition of $T_{\max }>0$. The proof is finished
\end{proof}

\section{Strong energy solutions for the linear case}

In this section we prove the existence  of strong solutions  in the linear case. The nonlinear problem will be considered in the next section.

\begin{definition}
\label{def-strong}
Let $\frac{1}{2}\leq \gamma\leq 1$ and $0\leq \widetilde{\gamma}\leq 1$. We say that a function $u$ is a strong solution of %
\eqref{EQ-LI} on $(0,T)$, for some $T>0$, if it is a weak solution in the
sense of Definition \ref{def-weak} and, additionally,%
\begin{equation}
\{\mathbb{D}_{t}^{\alpha }u,Au\}\in L^{l}(0,T;L^{2}\left( \Omega \right) )\,,%
\text{ }u\in W^{1,\zeta }\left( 0,T;V_{\widetilde{\gamma }}\right) \cap
C^{1}((0,T];V_{\widetilde{\gamma }})\cap C\left( \left[ 0,T\right]
;V_{\gamma }\right) ,  \label{reg-strong}
\end{equation}%
where $\zeta \in \left[ 1,\infty \right]$, is such that $\alpha \left(
\gamma -\widetilde{\gamma }\right) +\zeta ^{-1}>1,$ and%
\begin{equation*}
\left\{ 
\begin{array}{ll}
1\leq l<\min \{\frac{1}{\alpha \left( 1-\gamma \right) },\frac{1}{\alpha
\left( 1-\widetilde{\gamma }\right) -1}\}, & \text{if }\widetilde{\gamma }%
<1-\alpha ^{-1}, \\ 
1\leq l<\frac{1}{\alpha \left( 1-\gamma \right) }, & \text{if }\widetilde{%
\gamma }\geq 1-\alpha ^{-1}.%
\end{array}%
\right.
\end{equation*}%
In this case the main equation of \eqref{EQ-LI} is satisfied pointwise a.e.
in $\Omega \times \left( 0,T\right) .$
\end{definition}

\begin{remark}
When $l=1$ we recover  \cite[Definition 3.3]{AGKW}.  It means that our definition of strong solution is more general that the previous one.
\end{remark}

We have the following fundamental existence result for strong solutions.

\begin{theorem}
\label{theo-strong} Let $\frac{1}{2}\leq \gamma\leq 1$ and $0\leq \widetilde{\gamma}\leq 1$ and assume%
\begin{equation*}
f\in W^{1,\frac{p}{p-1}}(0,T;L^{2}(\Omega )),\text{ for }1\leq p\leq \infty .
\end{equation*}%
Then the following assertions hold.

\begin{enumerate}
\item[(a)] If $u_{0}\in V_{\gamma }$ and $u_{1}\in V_{\widetilde{\gamma }}$,
there is a constant $C>0$ such that for every $t\in (0,T]$,%
\begin{align}
& \left\Vert \mathbb{D}_{t}^{\alpha }u(\cdot ,t)\right\Vert _{L^{2}\left(
\Omega \right) }+\left\Vert Au(\cdot ,t)\right\Vert _{L^{2}\left( \Omega
\right) }  \label{est-mathbbd} \\
\leq & C\left( t^{\alpha \left( \gamma -1\right) }\Vert u_{0}\Vert
_{V_{\gamma }}+t^{1-\alpha \left( 1-\widetilde{\gamma }\right) }\Vert
u_{1}\Vert _{V_{\widetilde{\gamma }}}\right)  \notag \\
& +C\left( \Vert f(\cdot ,t)\Vert _{L^{2}(\Omega )}+\Vert f(\cdot ,0)\Vert
_{L^{2}(\Omega )}+Ct^{\frac{1}{p}}\Vert \partial _{t}f\Vert _{L^{p/\left(
p-1\right) }((0,T);L^{2}(\Omega ))}\right)  \notag
\end{align}%
and, if $\gamma -\widetilde{\gamma }\in (0,1],$ we have
\begin{align}
\left\Vert \partial _{t}u(\cdot ,t)\right\Vert _{V_{\widetilde{\gamma }}}&
\leq C\left( t^{ \alpha \left( \gamma -\widetilde{\gamma }\right)
-1 }\Vert u_{0}\Vert _{V_{\gamma }}+\Vert u_{1}\Vert _{V_{\widetilde{%
\gamma }}}\right.  \notag \\
&\left.+t^{\frac{1}{p}+\alpha \left( 1-\widetilde{\gamma }\right) -2}\Vert
f\Vert _{L^{p/\left( p-1\right) }((0,T);L^{2}(\Omega ))}\right) ,\text{ if }%
\frac{1}{p}+\alpha \left( 1-\widetilde{\gamma }\right) -2>0.
\label{est-mathbbd2} \\
\Vert u(\cdot ,t)\Vert _{V_{\gamma }}& \leq C\left( \Vert u_{0}\Vert
_{V_{\gamma }}+t^{1-\alpha \left( \gamma -\widetilde{\gamma }\right) }\Vert
u_{1}\Vert _{V_{\widetilde{\gamma }}}+t^{\frac{1}{p}+\alpha \left( 1-\gamma
\right) -1}\Vert f\Vert _{L^{p/\left( p-1\right) }((0,T);L^{2}(\Omega
))}\right) .  \label{est-mathbbd3}
\end{align}%

In particular, the system \eqref{EQ-LI} has a unique strong solution in the
sense of Definition \ref{def-strong}, that is given exactly by %
\eqref{sol-spec}.

\item[(b)] Let $0\leq \theta <1$ and assume that $u_{0}\in V_{\gamma },$ $%
u_{1}\in V_{\widetilde{\gamma }}$ with both $\gamma -\theta ,$ $\widetilde{%
\gamma }-\theta \in \left[ 0,1\right] $. Then the strong solution of %
\eqref{EQ-LI} also satisfies the following estimate:%
\begin{align}
\left\Vert \partial _{t}^{2}u(\cdot ,t)\right\Vert _{V_{\theta }}\leq &
C\left( t^{\alpha (\gamma -\theta )-2}\Vert u_{0}\Vert _{V_{\gamma
}}+t^{\alpha (\widetilde{\gamma }-\theta -1)}\Vert u_{1}\Vert _{V_{%
\widetilde{\gamma }}}\right. \notag \\
& \qquad \left. +t^{\frac{1}{p}+\alpha \left( 1-\theta \right) -2}\Vert
\partial _{t}f\Vert _{L^{p/\left( p-1\right) }(0,T;L^{2}(\Omega
))}+t^{\alpha \left( 1-\theta \right) -2}\Vert f(\cdot ,0)\Vert
_{L^{2}(\Omega )}\right) .   \label{est-strong}
\end{align}%
\newline

\item[(c)] The initial conditions are also satisfied in the following
(strong) sense: 
\begin{equation}  \label{ini-strong}
\lim_{t\rightarrow 0^{+}}\left\Vert u(\cdot ,t)-u_{0}\right\Vert _{V_{\sigma
}}=0,\lim_{t\rightarrow 0^{+}}\left\Vert \partial _{t}u(\cdot
,t)-u_{1}\right\Vert _{V_{\beta }}=0,
\end{equation}%
for $\sigma \in \lbrack 0,\gamma ],$ $\sigma -\widetilde{\gamma }\in \left[
0,1\right] $ with $1-\alpha \left( \sigma -\widetilde{\gamma }\right) >0$
only if $\sigma >\widetilde{\gamma },$ \textbf{and} $\beta \in \lbrack 0,%
\widetilde{\gamma }]$ is such that $\gamma -\beta \in \left[ 0,1\right] $
and $\gamma -\beta >1/\alpha .$
\end{enumerate}
\end{theorem}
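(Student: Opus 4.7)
The plan is to start from the representation formula \eqref{sol-spec}, which Theorem \ref{theo-weak} already identifies as the unique weak solution under the present hypotheses, and to upgrade the pointwise-in-$t$ estimates derived there to the stronger ones \eqref{est-mathbbd}--\eqref{est-strong} by exploiting the additional smoothness $f\in W^{1,p/(p-1)}(0,T;L^{2}(\Omega ))$ together with the sharper initial data $u_{0}\in V_{\gamma },\,u_{1}\in V_{\widetilde{\gamma }}$. Exactly as before, the solution splits as
\begin{equation*}
u(\cdot,t)=S_{1}(t)u_{0}(\cdot)+S_{2}(t)u_{1}(\cdot)+(S_{3}\ast f(\cdot,\cdot))(t),
\end{equation*}
and each piece is handled with the weighted Mittag-Leffler bounds \eqref{IN-L1}--\eqref{IN-L2} of Lemma \ref{lem-INE}, redistributing the multiplier $m(\xi)$ into $m^{\beta}\cdot m^{\sigma}$ so that the required weight is absorbed into $u_{0}$ or $u_{1}$ while the residual power of $\lambda=m$ is consumed by \eqref{IN-L1} or \eqref{IN-L2}.

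First I would establish \eqref{est-mathbbd}. Writing $m=m^{1-\gamma}\cdot m^{\gamma}$ and applying \eqref{IN-L1} with $\beta=1-\gamma$ yields $\Vert m E_{\alpha ,1}(-mt^{\alpha })u_{0}\Vert _{L^{2}(\Omega)}\leq Ct^{\alpha(\gamma-1)}\Vert u_{0}\Vert _{V_{\gamma}}$, and the analogous choice $\beta=1-\widetilde{\gamma}$ handles the $u_{1}$-piece. The convolution term is the delicate one, and the decisive observation, obtained from \eqref{Est-MLF2}, is the identity
\begin{equation*}
m(t-\tau)^{\alpha-1}E_{\alpha,\alpha}(-m(t-\tau)^{\alpha})=\partial_{\tau}\bigl[E_{\alpha,1}(-m(t-\tau)^{\alpha})\bigr];
\end{equation*}
integrating by parts in $\tau$ rewrites the convolution as
\begin{equation*}
f(\cdot,t)-E_{\alpha,1}(-mt^{\alpha})f(\cdot,0)-\int_{0}^{t}E_{\alpha,1}(-m(t-\tau)^{\alpha})\partial_{\tau}f(\cdot,\tau)\,d\tau,
\end{equation*}
and the uniform bound $|E_{\alpha,1}|\leq C$ from \eqref{Est-MLF} together with H\"older's inequality reproduces the three $f$-contributions in \eqref{est-mathbbd}. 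The bound on $\Vert \mathbb{D}_{t}^{\alpha}u\Vert _{L^{2}(\Omega)}$ then comes for free from the pointwise identity $\mathbb{D}_{t}^{\alpha}u=-Au+f$.

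For \eqref{est-mathbbd2} and \eqref{est-mathbbd3} I would differentiate \eqref{sol-spec} using \eqref{Est-MLF2}--\eqref{3} and apply \eqref{IN-L1} termwise, now against the weight $m^{\widetilde{\gamma}}$; the same integration-by-parts step as above converts the $\partial_{t}(S_{3}\ast f)$-contribution into a piece involving $\partial_{t}f$, generating the factor $t^{1/p+\alpha(1-\widetilde{\gamma})-2}$. The regularity assertions in \eqref{reg-strong} follow by integrating the resulting time-weighted bounds in $\tau$: the piecewise range of $l$ is dictated by which of the two singularities $t^{\alpha(\gamma-1)}$ or $t^{1-\alpha(1-\widetilde{\gamma})}$ is nonintegrable near $t=0$, and the condition $\alpha(\gamma-\widetilde{\gamma})+\zeta^{-1}>1$ is the precise threshold extracted from \eqref{est-mathbbd2} for $\partial_{t}u\in L^{\zeta}(0,T;V_{\widetilde{\gamma}})$.

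Part (b) is the main obstacle. One differentiates \eqref{sol-spec} twice in $t$, producing via \eqref{3} the factor $t^{\alpha-2}E_{\alpha,\alpha-1}(-mt^{\alpha})$, and must then integrate by parts once more against $\partial_{\tau}f$ to tame the otherwise nonintegrable kernel $(t-\tau)^{\alpha-3}$. The extraction of the weight $m^{\theta}$ is carried out by \eqref{IN-L2}: for the $u_{0}$-piece one writes $m^{1+\theta}=m^{1-(\gamma-\theta)}\cdot m^{\gamma}$ and invokes \eqref{IN-L2} with the lemma's $\gamma_{L}=\gamma-\theta$, yielding $t^{\alpha(\gamma-\theta)-2}\Vert u_{0}\Vert _{V_{\gamma}}$; the analogous choice $\gamma_{L}=\widetilde{\gamma}-\theta$ produces $t^{\alpha(\widetilde{\gamma}-\theta-1)}\Vert u_{1}\Vert _{V_{\widetilde{\gamma}}}$ for the $u_{1}$-piece, and this is exactly where the hypothesis $\gamma-\theta,\,\widetilde{\gamma}-\theta\in[0,1]$ intervenes to place these exponents in the admissible range of Lemma \ref{lem-INE}. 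Part (c) is finally routine: each contribution to $u(\cdot,t)-u_{0}$ and to $\partial_{t}u(\cdot,t)-u_{1}$ is treated by the Dominated Convergence Theorem, exactly as in Step 5 of the proof of Theorem \ref{theo-weak}, with the stronger hypotheses $u_{0}\in V_{\gamma},\,u_{1}\in V_{\widetilde{\gamma}}$ relaxing the admissible range of $\sigma$ and $\beta$ to the one stated in \eqref{ini-strong}.
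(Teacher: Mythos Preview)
Your proposal is correct and follows essentially the same route as the paper: split $u=S_{1}u_{0}+S_{2}u_{1}+S_{3}\ast f$, use the integration-by-parts identity $m(t-\tau)^{\alpha-1}E_{\alpha,\alpha}(-m(t-\tau)^{\alpha})=\partial_{\tau}E_{\alpha,1}(-m(t-\tau)^{\alpha})$ from \eqref{Est-MLF2} to obtain \eqref{est-mathbbd}, differentiate twice with the Leibniz boundary term $t^{\alpha-2}E_{\alpha,\alpha-1}(-mt^{\alpha})f(\cdot,0)$ for part (b), and run Dominated Convergence for part (c).

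One small correction: for \eqref{est-mathbbd2} and \eqref{est-mathbbd3} the paper does \emph{not} integrate by parts. The convolution kernel $m^{\widetilde{\gamma}}(t-\tau)^{\alpha-2}E_{\alpha,\alpha-1}(-m(t-\tau)^{\alpha})$ is already controlled by $(t-\tau)^{\alpha(1-\widetilde{\gamma})-2}$ via \eqref{IN-L1}, and H\"older against $f$ itself produces the stated factor $t^{1/p+\alpha(1-\widetilde{\gamma})-2}\Vert f\Vert_{L^{p/(p-1)}}$; this is why \eqref{est-mathbbd2}--\eqref{est-mathbbd3} carry $\Vert f\Vert$ rather than $\Vert\partial_{t}f\Vert$. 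If you did integrate by parts there you would obtain a different (in fact less singular) exponent, not the one claimed.
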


\begin{proof}
(a) First, since $f\in W^{1,p/\left( p-1\right) }(0,T;L^{2}(\Omega ))$, then 
$f\in C([0,T];L^{2}(\Omega ))\subset C([0,T];V_{-\gamma })$. Second, let $%
u_{0}\in V_{\gamma }$, $u_{1}\in V_{\widetilde{\gamma }}$ and $u$ the weak
solution of \eqref{EQ-LI}. Recall that $\mathbb{D}^{\alpha }u=-Au+f$ is
given by \eqref{dt-al}. Using \eqref{IN-L2}, we get the following estimates: 
\begin{equation}\label{D2-SS0}
\Vert m(\cdot )E_{\alpha ,1}(-m(\cdot )t^{\alpha })u_{0}(\cdot )\Vert
_{L^{2}(\Omega )}^{2}\leq C^{2}t^{2\alpha \left( \gamma -1\right) }\Vert
u_{0}\Vert _{V_{\gamma }},
\end{equation}%
where we require that $\gamma >1-\frac{1}{\alpha }$ so that the coefficient $%
t^{\alpha \left( \gamma -1\right) }$ is (at least) integrable near the origin%
\footnote{%
But this holds since $\gamma \geq \frac{1}{2}>1-\frac{1}{\alpha }.$}.
Similarly, we get%
\begin{equation}
\left\Vert m(\cdot )tE_{\alpha ,2}(-m(\cdot )t^{\alpha })u_{1}(\cdot
)\right\Vert _{L^{2}(\Omega )}\leq Ct^{1-\alpha \left( 1-\widetilde{\gamma }%
\right) }\Vert u_{1}\Vert _{V_{\widetilde{\gamma }}},  \label{D2-SS}
\end{equation}%
provided that $\widetilde{\gamma }\in \left[ 0,1\right] $. Using %
\eqref{Est-MLF2} and integrating by parts, we deduce 
\begin{align}
& \left\Vert \int_{0}^{t}m(\cdot )(t-\tau )^{\alpha -1}E_{\alpha ,\alpha
}(-m(\cdot )(t-\tau )^{\alpha })f(\cdot ,\tau )d\tau \right\Vert
_{L^{2}(\Omega )}  \notag\\
=& \Vert -f(\cdot ,t)+E_{\alpha ,1}(-m(\cdot )t^{\alpha })f(\cdot ,0)  \notag
\\
& +\int_{0}^{t}f^{\prime }(\cdot ,\tau )E_{\alpha ,1}(-m(\cdot )(t-\tau
)^{\alpha })d\tau \Vert _{L^{2}(\Omega )}  \notag \\
\leq & \Vert f(\cdot ,t)\Vert _{L^{2}(\Omega )}+C\Vert f(\cdot ,0)\Vert
_{L^{2}(\Omega )}+Ct^{\frac{1}{p}}\Vert \partial _{t}f\Vert _{L^{p/\left(
p-1\right) }((0,T);L^{2}(\Omega ))},  \label{D3-SS} 
\end{align}%
for $p\in \left[ 1,\infty \right] $. It follows from \eqref{D2-SS0}, %
\eqref{D2-SS}, \eqref{D3-SS} that $\mathbb{D}_{t}^{\alpha }u\in
L^{l}((0,T);L^{2}(\Omega ))$ since%
\begin{equation*}
f\in W^{1,p/\left( p-1\right) }\left( (0,T);L^{2}(\Omega )\right) .
\end{equation*}%
The estimate \eqref{est-mathbbd} then follows from \eqref{D2-SS0}, %
\eqref{D2-SS}, \eqref{D3-SS} on account of \eqref{dt-al}. Since $Au=-\mathbb{%
D}_{t}^{\alpha }u+f$, we have also shown that $u\in L^{l}((0,T);D(A))$.
Thus, $u$ is a unique strong solution of \eqref{EQ-LI}. Next, we exploit the
formula (\ref{D1-1}) to derive the estimate (\ref{est-mathbbd2}). By token
of (\ref{IN-L1}), we first observe that%
\begin{equation*}
\Vert S_{1}^{\prime }(t)u_{0}(\cdot )\Vert _{V_{\widetilde{\gamma }%
}}^{2}\leq C^{2}t^{2\left( \alpha \left( \gamma -\widetilde{\gamma }\right)
-1\right) }\Vert u_{0}\Vert _{V_{\gamma }}^{2},
\end{equation*}%
provided that $0<\gamma -\widetilde{\gamma }\leq 1$. Moreover,%
\begin{equation*}
\Vert S_{2}^{\prime }(t)u_{1}(\cdot )\Vert _{V_{\widetilde{\gamma }%
}}^{2}\leq C^{2}\Vert u_{1}\Vert _{V_{\widetilde{\gamma }}}^{2},
\end{equation*}%
and%
\begin{align*}
\Vert (S_{3}^{\prime }\ast f(\xi ,\cdot ))(t)\Vert _{V_{\widetilde{\gamma }%
}}& \leq C\int_{0}^{t}(t-\tau )^{\alpha -2-\alpha \widetilde{\gamma }}\Vert
f(\cdot ,\tau )\Vert _{L^{2}(\Omega )}d\tau \\
& \leq Ct^{\frac{1}{p}+\alpha \left( 1-\widetilde{\gamma }\right) -2}\Vert
f\Vert _{L^{p/\left( p-1\right) }((0,T);L^{2}(\Omega ))},
\end{align*}%
provided that $\frac{1}{p}+\alpha \left( 1-\widetilde{\gamma }\right) >2,$
for $p\in \left[ 1,\infty \right] .$ Collecting all these estimates together
yields (\ref{est-mathbbd2}). The estimate (\ref{est-mathbbd3}) follows from
the proof of Theorem \ref{theo-weak}\ with some minor (obvious)
modifications.

(b) Let $0\leq \theta <1$ and assume that $u_{0}\in V_{\gamma }$ and $%
u_{1}\in V_{\widetilde{\gamma }}$. Let $u$ be the strong solution of %
\eqref{EQ-LI}. From a simple calculation, for a.e. $t\in (0,T)$, we have%
\begin{align}
\partial _{t}^{2}u(\cdot ,t)=& m(\cdot )t^{\alpha -2}E_{\alpha ,\alpha
-1}(-m(\cdot )t^{\alpha })u_{0}(\cdot )-m(\cdot )E_{\alpha ,\alpha
}(-m(\cdot )t^{\alpha })u_{1}(\cdot ) \notag \\
& +\int_{0}^{t}(t-\tau )^{\alpha -2}E_{\alpha ,\alpha -1}(-m(\cdot )(t-\tau
)^{\alpha })f^{\prime }(\cdot ,\tau )d\tau  \notag \\
& +t^{\alpha -2}E_{\alpha ,\alpha -1}(-m(\cdot )t^{\alpha })f(\cdot ,0) 
\notag \\
=:& S_{1}^{\prime \prime }(t)u_{0}(\cdot )+S_{2}^{\prime \prime
}(t)u_{1}(\cdot )+(S_{3}^{\prime }\ast f^{^{\prime }}(\xi ,\cdot
))(t)+t^{\alpha -2}E_{\alpha ,\alpha -1}(-m(\cdot )t^{\alpha })f(\cdot ,0).   \label{DD-2}
\end{align}%
Note that 
\begin{equation*}
\Vert S_{1}^{\prime \prime }(t)u_{0}(\cdot )\Vert _{V_{\theta
}}^{2}=\int_{\Omega }\left\vert (m(\xi ))^{1-\gamma +\theta }t^{\alpha
-2}E_{\alpha ,\alpha -1}(-m(\xi )t^{\alpha })(m(\xi ))^{\gamma }u_{0}(\xi
)\right\vert ^{2}d\mu .
\end{equation*}%
Since $0\leq \theta <1$, we have from \eqref{IN-L1} that%
\begin{align*}
\Vert S_{1}^{\prime \prime }(t)u_{0}(\cdot )\Vert _{V_{\theta }}& \leq
Ct^{\left( \alpha -2-\alpha \left( 1-\gamma +\theta \right) \right) }\left(
\int_{\Omega }\left\vert (m(\xi ))^{\gamma }u_{0}(\xi )\right\vert ^{2}d\mu
\right) ^{\frac{1}{2}} \\
& =Ct^{\alpha (\gamma -\theta )-2}\Vert u_{0}\Vert _{V_{\gamma }},
\end{align*}%
provided that $\gamma -\theta \in \left[ 0,1\right] $. Next,%
\begin{equation*}
\Vert S_{2}^{\prime \prime }(t)u_{1}(\cdot )\Vert _{V_{\theta
}}^{2}=\int_{\Omega }\left\vert (m(\xi ))^{1-\widetilde{\gamma }+\theta
}E_{\alpha ,\alpha }(-m(\xi )t^{\alpha })(m(\xi ))^{\widetilde{\gamma }%
}u_{1}(\xi )\right\vert ^{2}d\mu ,
\end{equation*}%
and since $\widetilde{\gamma }-\theta \in \left[ 0,1\right] $, using %
\eqref{IN-L2} we deduce that%
\begin{equation*}
\Vert S_{2}^{\prime \prime }(t)u_{1}(\cdot )\Vert _{V_{\theta }}\leq
Ct^{\alpha (\widetilde{\gamma }-\theta -1)}\Vert u_{1}\Vert _{V_{\widetilde{%
\gamma }}}.
\end{equation*}%
For the third term, we have%
\begin{align*}
& \Vert (S_{3}^{\prime }\ast f^{\prime }(\xi ,\cdot ))(t)\Vert _{V_{\theta }}
\\
& \leq \int_{0}^{t}\left( \int_{\Omega }|(t-\tau )^{\alpha -2}(m(\xi
))^{\theta }E_{\alpha ,\alpha -1}(m(\xi )(t-\tau )^{\alpha })f^{\prime }(\xi
,\tau )|^{2}d\mu \right) ^{1/2}d\tau .
\end{align*}%
Using \eqref{IN-L1}, the H\"{o}lder inequality, the fact that $\frac{1}{p}%
+\alpha \left( 1-\theta \right) >2$ and $\theta\leq \tilde{\gamma}$, we find 
\begin{equation*}
\Vert (S_{3}^{\prime }\ast f^{\prime }(\xi ,\cdot ))(t)\Vert _{V_{\theta
}}\leq Ct^{\frac{1}{p}+\alpha \left( 1-\theta \right) -2}\Vert \partial
_{t}f\Vert _{L^{p/\left( p-1\right) }((0,T);L^{2}(\Omega ))}.
\end{equation*}%
For the fourth term, using once again \eqref{IN-L1}, we get 
\begin{equation*}
\left\Vert t^{\alpha -2}E_{\alpha ,\alpha -1}(-m(\cdot )t^{\alpha })f(\cdot
,0)\right\Vert _{V_{\theta }}\leq Ct^{\alpha \left( 1-\theta \right)
-2}\Vert f(\cdot ,0)\Vert _{L^{2}(\Omega )}.
\end{equation*}%

(c) Finally, let us verify the initial conditions. For this argument, we
shall exploit once again (\ref{diff-u0}) and (\ref{diff-u1}). Now, since 
$1-\frac{1}{\alpha }<\frac{1}{2}\leq \gamma$, then the condition $1\leq p<%
\frac{1}{1-\alpha (1-\gamma )}$ make sense as before. Thus, from \eqref{S3},
we get 
\begin{equation*}
\left\Vert (S_{3}\ast f(\xi ,\cdot ))(t)\right\Vert _{V_{\gamma }}\leq
Ct^{\alpha \left( 1-\gamma \right) -1+\frac{1}{p}}\Vert f\Vert _{L^{p/\left(
p-1\right) }((0,T);L^{2}(\Omega ))}\rightarrow 0,\text{ as }%
t\rightarrow 0^{+}.
\end{equation*}

Moreover, for $\sigma \leq \gamma ,$ we have that
\begin{equation*}
\Vert S_{2}(t)u_{1}(\cdot )\Vert _{V_{\sigma }}\leq Ct^{1-\alpha \left(
\sigma -\widetilde{\gamma }\right) }\Vert u_{1}\Vert _{V_{\gamma
}},\rightarrow 0,\quad t\rightarrow 0^{+}.
\end{equation*}%
Furthermore, for $u_{0}\in V_{\gamma }$, $E_{\alpha ,1}(m(\xi )t^{\alpha
})u_{0}(\xi )\rightarrow u_{0}(\xi )$ as $t\rightarrow 0^{+}$, and $\sigma
<\gamma ,$%
\begin{equation*}
|(m(\xi ))^{\sigma }(E_{\alpha ,1}(-m(\xi )t^{\alpha })u_{0}(\xi )-u_{0}(\xi
))|^{2}\leq 4C^{2}m_{0}^{2(\sigma -\gamma )}|(m(\xi ))^{\gamma }u_{0}(\xi
))|^{2}\in L^{1}(\Omega ),
\end{equation*}%
one has from the Dominated Convergence Theorem that 
\begin{align*}
& \left\Vert E_{\alpha ,1}(-m(\cdot )t^{\alpha })u_{0}(\cdot )-u_{0}(\cdot
)\right\Vert _{V_{\sigma }} \\
& =\left( \int_{\Omega }|(m(\xi ))^{\sigma }(E_{\alpha ,1}(-m(\xi )t^{\alpha
})u_{0}(\xi )-u_{0}(\xi ))|^{2}d\mu \right) ^{1/2}\rightarrow 0,
\end{align*}%
as $t\rightarrow 0^{+}$. Then the first limit of (\ref{ini-strong}) is a
simple corollary of these estimates and the identity (\ref{diff-u0}).
Analogously, for the second statement of (\ref{ini-strong}) we may exploit
the formula (\ref{diff-u1}). From \eqref{IN-L1} and the implied condition $%
1\leq p<\frac{1}{2-\alpha \left( 1-\widetilde{\gamma }\right) }$ and $\beta
\leq \widetilde{\gamma }$, we get 
\begin{equation*}
\left\Vert (S_{3}^{\prime }\ast f(\xi ,\cdot ))(t)\right\Vert _{V_{\beta
}}\leq Ct^{\alpha \left( 1-\beta \right) -2+\frac{1}{p}}\Vert f\Vert
_{L^{p/\left( p-1\right) }((0,T);L^{2}(\Omega ))}\rightarrow 0\text{ as }%
t\rightarrow 0^{+}.
\end{equation*}%
In addition, we have that
\begin{equation*}
\Vert S_{1}^{\prime }(t)u_{0}(\cdot )\Vert _{V_{\beta }}^{2}=\int_{\Omega
}|(m(\xi ))^{1-\gamma +\beta }t^{\alpha -1}E_{\alpha ,1}(-m(\xi )t^{\alpha
})(m(\xi ))^{\gamma }u_{0}(\xi )|^{2}d\mu .
\end{equation*}%
Since $1-\gamma +\beta \in \left[ 0,1\right] $,  it follows that
\begin{equation}
\Vert S_{1}^{\prime }(t)u_{0}(\cdot )\Vert _{V_{\beta }}\leq Ct^{\alpha
\left( \gamma -\beta \right) -1}\Vert u_{0}\Vert _{V_{\gamma }}  \label{dd-3}
\end{equation}%
by the assumption $\alpha \left( \gamma -\beta \right) >1$, we get the
desired vanishing convergence as $t\rightarrow 0^{+},$ of the right hand
side of (\ref{dd-3}). Additionally, the Dominated Convergence Theorem shows%
\begin{align*}
& \left\Vert E_{\alpha ,1}(-m(\cdot )t^{\alpha })u_{1}(\cdot )-u_{1}(\cdot
)\right\Vert _{V_{\beta }} \\
& \leq m_{0}^{-\left( \widetilde{\gamma }-\beta \right) }\left( \int_{\Omega
}|E_{\alpha ,1}(-m(\xi )t^{\alpha })(m(\xi ))^{\widetilde{\gamma }}u_{1}(\xi
)-(m(\xi ))^{\widetilde{\gamma }}u_{1}(\xi )|^{2}d\mu \right) ^{1/2}
\end{align*}%
converges to 0, as $t \rightarrow 0^{+}.$ The proof of the theorem is
complete.
\end{proof}

\begin{remark}
A comparison with \cite[Theorem 3.4]{AGKW} lead us a better regularity by the range of the parameters $\gamma,\widetilde{\gamma}$.  This includes the case $\gamma=1/\alpha$ and $\widetilde{\gamma}=0$.
Moreover, 
If $\gamma>\theta+\frac{1}{\alpha}$, $\tilde{\gamma}>\theta+1-\frac{1}{\alpha}$ and $\theta<1-\frac{1}{\alpha}$, then $u\in W^{2,1}((0,T);V_{\theta})$.  Moreover, 
\begin{align}
\left\Vert \partial _{t}^{2}u\right\Vert _{L^1(0,T;V_{\theta })}\leq &
C\left( T^{\alpha (\gamma -\theta )-1}\Vert u_{0}\Vert _{V_{\gamma
}}+T^{\alpha (\widetilde{\gamma }-\theta -1)+1}\Vert u_{1}\Vert _{V_{%
\widetilde{\gamma }}}\right. \notag \\
& \qquad \left. +T^{\frac{1}{p}+\alpha \left( 1-\theta \right) -1}\Vert
\partial _{t}f\Vert _{L^{p/\left( p-1\right) }(0,T;L^{2}(\Omega
))}+T^{\alpha \left( 1-\theta \right) -1}\Vert f(\cdot ,0)\Vert
_{L^{2}(\Omega )}\right) .   \label{est-strong1}
\end{align}
In particular, if $\theta=0$, we recover the estimate (3.30) of  \cite[Theorem 3.3, part (b)]{AGKW}.
\end{remark}

\section{Strong energy solutions for the semilinear case}

From now on, we assume the same conditions for the parameters given in Definition \ref{def-strong}.\\

In order to show the existence of strong energy solutions to the semilinear
problem \eqref{EQ-NL}, we introduce the Banach space%
\begin{equation*}
X_{\delta _{1},\delta _{2},T}=\left\{ u\in C\left( \left[ 0,T\right]
;V_{1/2}\right) \cap C^{1}\left( [0,T];L^{2}\left( \Omega \right) \cap
C((0,T]:D(A)\right) :\left\Vert u\right\Vert _{\delta _{1},\delta
_{2},T}<\infty \right\} ,
\end{equation*}%
where the norm is defined as (for properly chosen $\delta _{1},\delta
_{2}>0, $ depending on the physical parameters)%
\begin{equation*}
\left\Vert u\right\Vert _{\delta _{1},\delta _{2},T}:=\sup_{t\in \left[ 0,T%
\right] }\left( \left\Vert u\left( \cdot ,t\right) \right\Vert _{V_{\gamma
}}+t^{\delta _{1}}\left\Vert \partial _{t}u\left( \cdot ,t\right)
\right\Vert _{L^{2}\left( \Omega \right) }+t^{\delta _{2}}\Vert \mathbb{D}%
_{t}^{\alpha }u(\cdot ,t)\Vert _{L^{2}\left( \Omega \right) }+t^{\delta
_{2}}\left\Vert Au(\cdot ,t)\right\Vert _{L^{2}\left( \Omega \right)
}\right) .
\end{equation*}%
Next, define the space $\mathcal{B}_{\delta _{1},\delta _{2},T,R}\subset X_{\delta
_{1},\delta _{2},T},$ where%
\begin{equation*}
\mathcal{B}_{\delta _{1},\delta _{2},T,R}:=\left\{ u\in X_{\delta
_{1},\delta _{2},T}:\left\Vert u\right\Vert _{\delta _{1},\delta _{2},T}\leq
R\text{ and }u\left( 0\right) =u_{0},\text{ }\partial _{t}u\left( 0\right)
=u_{1}\right\} .
\end{equation*}
The \textit{strong energy solution} is defined in a similar way that the weak energy solution for the semilinear case.


We will impose the following assumption on the nonlinearity $f$.

\begin{enumerate}
\item[(\textbf{Hf3})] There exist two monotone increasing (real-valued)
functions $Q_{1},Q_{2}\geqslant 0$ such that%
\begin{equation*}
f(0)=0\;\text{and }\left\vert f^{^{\prime }}\left( s\right) \right\vert \leq
Q_{1}\left( \left\vert s\right\vert \right) ,\text{ }\left\vert f\left(
s\right) \right\vert \leq Q_{2}\left( \left\vert s\right\vert \right) \text{%
, for all }s\in {\mathbb{R}}.
\end{equation*}
\end{enumerate}

and 
\begin{equation*}
V_{\gamma}\hookrightarrow L^{\infty}(\Omega ).
\end{equation*}

The main result of the section is the following.

\begin{theorem}
\label{main-weak2}Assume (\textbf{Hf3}). The problem (\ref{EQ-NL0}) has a
unique strong energy solution in $X_{\delta _{1},\delta _{2},T}$, for all $%
T<T_{\max }$, where either $T_{\max }=\infty $ or $T_{\max }<\infty $, and
in that case, 
\begin{equation}
\underset{t\rightarrow T_{\max }^{-}}{\lim \sup }\left( \left\Vert u\left(
\cdot ,t\right) \right\Vert _{V_{\gamma }}+t^{\delta _{1}}\left\Vert
\partial _{t}u\left( \cdot ,t\right) \right\Vert _{L^{2}\left( \Omega
\right) }+t^{\delta _{2}}\Vert \mathbb{D}_{t}^{\alpha }u(\cdot ,t)\Vert
_{L^{2}\left( \Omega \right) }+t^{\delta _{2}}\left\Vert Au(\cdot
,t)\right\Vert _{L^{2}\left( \Omega \right) }\right) =\infty .  \label{bl}
\end{equation}
\end{theorem}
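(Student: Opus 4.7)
The plan is to parallel the proof of Theorem \ref{main-weak}, but now using the linear strong--solution estimates of Theorem \ref{theo-strong} in place of the weak ones, with the new unknown viewed as a fixed point of the same Duhamel map
\begin{equation*}
\Phi(v)(t) = S_1(t)u_0 + S_2(t)u_1 + (S_3\ast f(v))(t),
\end{equation*}
now acting on the space $X_{\delta_1,\delta_2,T}$. The proof will proceed in three steps: (i) a local contraction on $\mathcal{B}_{\delta_1,\delta_2,T^{\star},R^{\star}}$, (ii) a continuity--extension argument on $[T^{\star},T^{\star}+\tau]$, and (iii) a blow-up alternative at $T_{\max}$. The main obstacle will be calibrating the weights $\delta_1,\delta_2$ (and the radius/time $R^{\star},T^{\star}$) so that all singular time factors appearing in the strong linear estimates (\ref{est-mathbbd})--(\ref{est-mathbbd3}) are integrable and absorbable, while simultaneously producing a genuine contraction.

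For the contraction step I will first use (\textbf{Hf3}) together with $V_\gamma\hookrightarrow L^\infty(\Omega)$ to get
\begin{equation*}
\|f(v(\cdot,t))\|_{L^2(\Omega)} \le Q_2(\|v(\cdot,t)\|_{L^\infty})\,\mu(\Omega)^{1/2} \le C\,Q_2(C\|v(\cdot,t)\|_{V_\gamma}),
\end{equation*}
and analogously $\|f(v)-f(w)\|_{L^2(\Omega)}\le C\,Q_1(CR)\|v-w\|_{V_\gamma}$ on the ball of radius $R$. Also, since $\partial_t f(v) = f'(v)\partial_t v$, we obtain
\begin{equation*}
\|\partial_t f(v(\cdot,t))\|_{L^2(\Omega)} \le Q_1(C\|v(\cdot,t)\|_{V_\gamma})\,\|\partial_t v(\cdot,t)\|_{L^2(\Omega)}.
\end{equation*}
This lets me view $f(v)\in W^{1,p/(p-1)}(0,T;L^2(\Omega))$ for $v\in \mathcal{B}_{\delta_1,\delta_2,T,R}$, provided $p$ is chosen so that the weight $t^{-\delta_1\cdot p/(p-1)}$ is integrable; then Theorem \ref{theo-strong} applied with $\widetilde\gamma=0$ or $\widetilde\gamma=1/2$ (and $\gamma$ as in the statement) delivers, for $t\in(0,T^{\star}]$, bounds of the form
\begin{align*}
\|\Phi(v)(\cdot,t)\|_{V_\gamma} &\le C(\|u_0\|_{V_\gamma}+\|u_1\|_{V_{\widetilde\gamma}}) + C(T^{\star})^{\alpha(1-\gamma)-1+1/p}\,\Xi(R),\\
t^{\delta_1}\|\partial_t\Phi(v)(\cdot,t)\|_{L^2(\Omega)} &\le C(T^{\star})^{\delta_1}(\cdots) + C(T^{\star})^{\delta_1+\alpha(1-\widetilde\gamma)-2+1/p}\Xi(R),\\
t^{\delta_2}\bigl(\|\mathbb{D}_t^\alpha\Phi(v)\|_{L^2}+\|A\Phi(v)\|_{L^2}\bigr) &\le C(T^{\star})^{\delta_2+\alpha(\gamma-1)}\|u_0\|_{V_\gamma} + \cdots + C(T^{\star})^{\delta_2+1/p}\|\partial_t f(v)\|_{L^{p/(p-1)}}+C(T^{\star})^{\delta_2}\Xi(R),
\end{align*}
where $\Xi(R)$ collects the $Q_1,Q_2$-dependent factors. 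I will then choose $\delta_2\ge \alpha(1-\gamma)$, $\delta_1\ge 2-\alpha(1-\widetilde\gamma)-1/p$, take $R^{\star}\ge 3C(\|u_0\|_{V_\gamma}+\|u_1\|_{V_{\widetilde\gamma}})$ and $T^{\star}$ so small that all $T^{\star}$-powers are positive and the $\Xi(R^{\star})$-terms are $\le R^{\star}/3$. The same computations on the difference $\Phi(v)-\Phi(w)$, using the Lipschitz bound through $Q_1$, produce the contraction after a possibly smaller $T^{\star}$. Continuity of each of the four components of the norm in $t$ is handled exactly as in the proof of Theorem \ref{main-weak}, via the dominated convergence theorem applied to the representations of $S_1,S_2,S_3$ and their time derivatives together with the $Au$-formula $Au=-\mathbb{D}_t^\alpha u + f(u)$.

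For the extension step I will freeze the initial data at $T^{\star}$, take $(u(\cdot,T^{\star}),\partial_t u(\cdot,T^{\star}))\in V_\gamma\times V_{\widetilde\gamma}$ (which is guaranteed by the local regularity), and repeat the contraction on a closed set
\begin{equation*}
\mathcal{K}_{\delta_1,\delta_2,T^{\star},R}=\bigl\{v\in X_{\delta_1,\delta_2,T^{\star}+\tau}:\ v(\cdot,t)=u(\cdot,t)\ \forall t\le T^{\star},\ \text{norm-deviation on }[T^{\star},T^{\star}+\tau]\le R\bigr\},
\end{equation*}
estimating the increments $\Phi(v)(t)-u(T^{\star})$ in each of the four components by splitting the Duhamel integral at $T^{\star}$ and using dominated convergence term by term (as in the $\mathcal{N}_i$, $\mathcal{M}_i$ decomposition of Theorem \ref{main-weak}). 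The only genuinely new piece is controlling $\|\mathbb{D}_t^\alpha\Phi(v)\|_{L^2}$ and $\|A\Phi(v)\|_{L^2}$ near $t=T^{\star}$, which is immediate once the $V_\gamma$ and $\partial_t$ pieces are controlled, via the identity $A\Phi(v)=f(v)-\mathbb{D}_t^\alpha\Phi(v)$ and the integration-by-parts formula (\ref{D3-SS}).

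For the blow-up alternative I define $T_{\max}:=\sup\{T\ge 0:\ \exists\text{ unique strong solution on }[0,T]\}$; if $T_{\max}<\infty$ and (\ref{bl}) fails, then there exists $K_0<\infty$ bounding the full norm on $[0,T_{\max})$, whence by (\textbf{Hf3}) and $V_\gamma\hookrightarrow L^\infty$ the quantities $\|f(u(\cdot,t))\|_{L^2}$ and $\|\partial_t f(u(\cdot,t))\|_{L^2}$ are bounded uniformly on $[0,T_{\max})$. A Lemma~\ref{lem-34}-type Cauchy argument applied now also to the new components ($\mathbb{D}_t^\alpha u$ and $Au$ via $Au=-\mathbb{D}_t^\alpha u+f(u)$) yields a limit $(u_{T_{\max}},\partial_t u_{T_{\max}})\in V_\gamma\times L^2(\Omega)$ with all four norm pieces finite; restarting the local existence from $T_{\max}$ then contradicts maximality. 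The hardest point in this last step will be proving the Cauchy property of $\mathbb{D}_t^\alpha u(\cdot,t_n)$ in $L^2$: I expect to handle it by writing $\mathbb{D}_t^\alpha u = -Au+f(u)$ and using the already-established Cauchy property of $Au(\cdot,t_n)$, which itself follows by differentiating the Duhamel formula for $u$ in $V_\gamma$ one extra ``order'' via the integration-by-parts identity appearing in \eqref{D3-SS} together with the uniform bound on $\partial_t f(u)$. This closes the proof.
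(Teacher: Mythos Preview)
Your proposal is correct and follows essentially the same three-step scheme (contraction on $\mathcal{B}_{\delta_1,\delta_2,T,R}$, extension via a $\mathcal{K}$-set, blow-up alternative) as the paper. The only packaging difference is that you invoke Theorem~\ref{theo-strong} as a black box for the bounds on $\Phi(v)$, while the paper re-derives the Mittag-Leffler estimates directly; in particular the paper effectively takes $p=\infty$ (so the integration-by-parts identity analogous to \eqref{D3-SS} gives $\int_0^t\|f'(u)\partial_\tau u\|_{L^2}\,d\tau\le Q_1(R)\,T^{1-\delta_1}R$ without any $L^{p/(p-1)}$ compatibility condition) and fixes the weights explicitly as $\max\{0,1-\alpha\gamma\}\le\delta_1<1$ and $\delta_2\ge\max\{\alpha(1-\gamma),\alpha-1\}$. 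One wording caution: your phrase ``freeze the initial data at $T^\star$'' is misleading since the Caputo derivative is nonlocal in time and the problem cannot be restarted at $T^\star$; however, the set $\mathcal{K}$ you actually write down (functions agreeing with $u$ on $[0,T^\star]$, same Duhamel map with the original $u_0,u_1$) is precisely the paper's construction, so the argument goes through.
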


\begin{proof}
Since $f$ is continuously differentiable, we have that $\Phi (u(\cdot ))(t)$
is continuously differentiable on $(0,T]$. We will show that by an
appropriate choice of $T,R>0$, $\Phi :\mathcal{B}_{\delta _{1},\delta
_{2},T,R}\rightarrow \mathcal{B}_{\delta _{1},\delta _{2},T,R}$ is a
contraction with respect to the metric induced by the norm $\left\Vert \cdot
\right\Vert _{\delta _{1},\delta _{2},T}$. The appropriate choice of $T,R>0$
will be specified below. We first show that $\Phi $ maps $\mathcal{B}%
_{\delta _{1},\delta _{2},T,R}$ into $\mathcal{B}_{\delta _{1},\delta
_{2},T,R}$. Indeed, let $u\in \mathcal{B}_{\delta _{1},\delta _{2},T,R}$.
Let 
\textcolor{black}{
\begin{align*}
\max\{0,1-\alpha\gamma\}\leq \delta_1<1\quad \mbox{and}\quad  \max\{\alpha(1-\gamma),\alpha-1\}\leq \delta_2.
\end{align*}} By (\textbf{Hf3},) we have that 
\begin{equation*}
\Vert w\Vert _{L^{\infty }(\Omega )}\leq C\Vert w\Vert _{V_{\gamma }}.
\end{equation*}%
Set 
\begin{equation*}
S_{1}(t)u_{0}(\xi ):=E_{\alpha ,1}(-m(\xi )t^{\alpha })u_{0}(\xi )\;%
\mbox{
and }\;S_{2}(t)u_{1}(\xi ):=tE_{\alpha ,2}(-m(\xi )t^{\alpha })u_{1}(\xi ),
\end{equation*}%
and 
\begin{equation*}
(S_{3}\ast f(u(\xi ,\cdot )))(t):=\int_{0}^{t}f(u(\cdot ,\tau ))(t-\tau
)^{\alpha -1}E_{\alpha ,\alpha }((-m(\cdot )(t-\tau )^{\alpha })d\tau
\end{equation*}%
so that 
\begin{equation*}
\Phi (u(\xi ,t))=S_{1}(t)u_{0}(\xi )+S_{2}(t)u_{1}(\xi )+(S_{3}\ast f(u(\xi
,\cdot )))(t),\quad t\in \lbrack 0,T],\,\,\xi \in \Omega .
\end{equation*}%
Next, by assumption (\textbf{Hf3}), \eqref{embed-supnorm} and the facts that 
$\mu(\Omega)<\infty $ and $u\in C([0,T],V_{\gamma })$, for every $t\in \lbrack
0,T]$, 
\begin{equation*}
\Vert f(u(\cdot ,t))\Vert _{L^{2}(\Omega )}\leq \left( \int_{\Omega
}(Q_{2}(C_{1}\Vert u(\cdot ,t)\Vert _{V_{\gamma }})^{2}\;d\mu \right)
^{1/2}\leq C_{f}Q_{2}(\Vert u(\cdot ,t)\Vert _{V_{\gamma }}),
\end{equation*}%
that is, 
\begin{equation}
\Vert f(u(\cdot ,t))\Vert _{L^{2}(\Omega )}\leq C_{f}Q_{2}\left( \Vert
u(\cdot ,t)\Vert _{V_{\gamma }}\right) ,  \label{C1-1}
\end{equation}%
for some $C_{f}>0$. 
Using the estimate \eqref{C1-1}  we get
\begin{align*}
\Vert (S_{3}\ast f(\xi ,\cdot ))(\cdot )\Vert _{V_{\gamma }}\leq &
\int_{0}^{t}\left( \int_{\Omega }C^{2}(t-\tau )^{2(\alpha -1-\alpha \gamma
)}|f(u(\xi ,\tau ))|^{2}\;d\mu \right) ^{1/2}\;d\tau \\
& \leq CQ_{2}(R)T^{\alpha (1-\gamma )}.
\end{align*}%
Thus, proceeding as the proof of Theorem \ref{theo-weak}, we get that there
is a constant $C>0$ such that for every $t\in \lbrack 0,T]$, 
\begin{equation*}
\Vert \Phi (u(\cdot ))(t)\Vert _{V_{\gamma }}\leq C\left( \Vert u_{0}\Vert
_{V_{\gamma }}+\Vert u_{1}\Vert _{L^{2}(\Omega )}+Q_{2}\left( R\right)
T^{\alpha (1-\gamma )}\right) .
\end{equation*}%
Let us see that $\Phi u\in C([0,T];V_{\gamma })$. We start proving that $%
S_{1}(t)u_{0}(\xi )$ is continuous on $[0,T]$. Indeed, since $t\mapsto
S_{1}(t)x$ is continuous for each $t\in \lbrack 0,T]$,  for $h>0$ we have 
\begin{align*}
& \Vert S_{1}(t+h)u_{0}(\cdot )-S_{1}(t)u_{0}(\cdot )\Vert _{V_{\gamma }} \\
& =\int_{\Omega }|(E_{\alpha ,1}(-m(\xi )(t+h)^{\alpha })-E_{\alpha
,1}(-m(\xi )t^{\alpha }))(m(\xi ))^{\gamma }u_{0}(\xi )|^{2}\;d\mu .
\end{align*}%
Since $t\mapsto E_{\alpha ,1}(-m(\xi )t^{\alpha })u_{0}(\xi )$ is continuous
for every $t\in \lbrack 0,T]$, 
\begin{equation*}
|(E_{\alpha ,1}(-m(\xi )(t+h)^{\alpha })-E_{\alpha ,1}(-m(\xi )t^{\alpha
}))(m(\xi ))^{\gamma }u_{0}(\xi )|^{2}\leq C^{2}|(m(\xi ))^{\gamma
}u_{0}(\xi )|^{2}
\end{equation*}%
and $u_{0}\in V_{\gamma }$, it follows from Dominated Convergence Theorem
that $S_{1}(t)u_{0}(\xi )$ is continuous for all $t\in \lbrack 0,T]$. The
proof of the fact that $S_{2}(t)u_{1}(\xi )$ is continuous on $[0,T]$ is
similar. Now, 
\begin{align*}
& \Bigg\Vert \int_{0}^{t+h}(t+h-\tau )^{\alpha -1}E_{\alpha ,\alpha }(-m(\cdot
)(t+h-\tau )^{\alpha })f(u(\cdot ,\tau ))\;d\tau \\
& -\int_{0}^{t}(t-\tau )^{\alpha -1}E_{\alpha ,\alpha }(-m(\cdot )(t-\tau
)^{\alpha })f(u(\cdot ,\tau ))\;d\tau \Big\Vert _{V_{\gamma }} \\
& \leq \int_{0}^{t}\Vert \lbrack (t+h-\tau )^{\alpha -1}E_{\alpha ,\alpha
}(-m(\cdot )(t+h-\tau )^{\alpha })-(t-\tau )^{\alpha -1}E_{\alpha ,\alpha
}(-m(\cdot )(t-\tau )^{\alpha })]f(u(\cdot ,\tau ))\Vert _{V_{\gamma
}}\;d\tau \\
& +\int_{t}^{t+h}\Vert (t+h-\tau )^{\alpha -1}E_{\alpha ,\alpha }(-m(\cdot
)(t+h-\tau )^{\alpha })f(u(\cdot ,\tau ))\Vert _{V_{\gamma }}\;d\tau
=:I_{1}+I_{2}.
\end{align*}%
Let us see that $I_{1}\rightarrow 0$ as $h\rightarrow 0^{+}$. First, we need
to prove 
\begin{equation}
\lim_{h\rightarrow 0^{+}}\Vert \lbrack Q_{\alpha ,\alpha }(t+h-\tau
)-Q_{\alpha ,\alpha }(t-\tau )]f(u(\cdot ,\tau ))\Vert _{V_{\gamma }}=0.
\label{Dct}
\end{equation}%
Indeed, since $t\mapsto t^{\alpha -1}E_{\alpha ,\alpha }(-m(\xi )t^{\alpha
})f(u(\xi ,t))$ is continuous for every $t\in \lbrack 0,T]$, 
\begin{equation*}
|(m(\xi ))^{\gamma }[Q_{\alpha ,\alpha }(t+h-\tau )-Q_{\alpha ,\alpha
}(t-\tau )]f(u(\cdot ,\tau ))|\leq 2C(t-\tau )^{\alpha (1-\gamma
)-1}|f(u(\xi ,\tau ))|
\end{equation*}%
and $f(u(\cdot ,\tau ))\in L^{2}(\Omega )$, \eqref{Dct} follows from
Dominated Convergence Theorem. Moreover, 
\begin{equation*}
\Vert \lbrack Q_{\alpha ,\alpha }(t+h-\tau )-Q_{\alpha ,\alpha }(t-\tau
)]f(u(\cdot ,\tau ))\Vert _{V_{\gamma }}\leq C(t-\tau )^{\alpha (1-\gamma
)-1}Q_{2}(R).
\end{equation*}%
Since $\tau \mapsto C(t-\tau )^{\alpha (1-\gamma )-1}Q_{2}(R)$ is
integrable, then $I_{1}\rightarrow 0$ as $h\rightarrow 0^{+}$ by the
Dominated Convergence Theorem.

On the other hand, observe that 
\begin{equation*}
\Vert (t+h-\tau )^{\alpha -1}E_{\alpha ,\alpha }(-m(\cdot )(t+h-\tau
)^{\alpha })f(u(\cdot ,\tau ))\Vert _{V_{\gamma }}\leq C(t+h-\tau )^{\alpha
(1-\gamma )-1}Q_{2}(R)
\end{equation*}%
implies 
\begin{equation*}
I_{2}\leq CQ_{2}(R)\int_{t}^{t+h}(t+h-\tau )^{\alpha (1-\gamma )-1}\;d\tau
\leq CQ_{2}(R)h^{\alpha (1-\gamma )}\rightarrow 0,\qquad h\rightarrow 0^{+}.
\end{equation*}%
We have shown that $\Phi u\in C([0,T];V_{\gamma })$. Similarly, we have that
there is a constant $C>0$ such that for every $t\in \lbrack 0,T]$, 
\begin{align}
& t^{\delta _{1}}\Vert \Phi (u(\cdot ))^{\prime }(t)\Vert _{L^{2}(\Omega )} 
\notag \\
\leq & C\left( t^{\delta _{1}+\alpha \gamma -1}\Vert u_{0}\Vert _{V_{\gamma
}}+t^{\delta _{1}}\Vert u_{1}\Vert _{L^{2}(\Omega )}+T^{\delta _{1}+\alpha
-1}Q_{2}\left( \left\Vert u\right\Vert _{C\left( \left[ 0,T\right]
;V_{\gamma }\right) }\right) \right)  \notag \\
\leq & C\left( T^{\delta _{1}+\alpha \gamma -1}\Vert u_{0}\Vert _{V_{\gamma
}}+T^{\delta _{1}}\Vert u_{1}\Vert _{L^{2}(\Omega )}+T^{\delta _{1}+\alpha
-1}Q_{2}\left( R\right) \right) .  \label{C3}
\end{align}%
Let us see that $\Phi (u)\in C^{1}([0,T];L^{2}(\Omega ))$. From the
Dominated Convergence Theorem one gets $m(\cdot )t^{\alpha -1}E_{\alpha
,\alpha }(-m(\xi )t^{\alpha })u_{0}(\cdot )\in C([0,T];L^{2}(\Omega ))$ and $%
E_{\alpha ,1}(-m(\cdot )t^{\alpha })u_{1}(\cdot )\in C([0,T];L^{2}(\Omega ))$%
. Now, since 
\begin{equation*}
\lim_{h\rightarrow 0^{+}}|[Q_{\alpha ,\alpha -1}(t+h-\tau )-Q_{\alpha
,\alpha -1}(t-\tau )]f(u(\xi ,\tau ))|=0\quad a.e.,
\end{equation*}%
\begin{align*}
& |[Q_{\alpha ,\alpha -1}(t+h-\tau )-Q_{\alpha ,\alpha -1}(t-\tau )]f(u(\xi
,\tau ))| \\
& \leq C(t-\tau )^{\alpha -2}|f(u(\xi ,\tau ))|,
\end{align*}%
and \eqref{C1-1}, we have that 
\begin{equation*}
\lim_{h\rightarrow 0^{+}}\left\Vert [Q_{\alpha ,\alpha -1}(t+h-\tau
)-Q_{\alpha ,\alpha -1}(t-\tau )]f(u(\xi ,\tau ))\right\Vert _{L^{2}(\Omega
)}=0.
\end{equation*}%
The Dominated Convergence Theorem implies that 
\begin{equation*}
J_{1}:=\int_{0}^{t}\left\Vert [Q_{\alpha ,\alpha -1}(t+h-\tau )-Q_{\alpha
,\alpha -1}(t-\tau )]f(u(\xi ,\tau ))\right\Vert _{L^{2}(\Omega )}\;d\tau
\end{equation*}%
goes to zero as $h\rightarrow 0^{+}$. As before, we can prove that 
\begin{equation*}
J_{2}:=\int_{t}^{t+h}\Vert (t+h-\tau )^{\alpha -2}E_{\alpha ,\alpha
-1}(-m(\cdot )(t+h-\tau )^{\alpha })f(u(\cdot ,\tau ))\Vert _{L^{2}(\Omega
)}\;d\tau \rightarrow 0,\quad h\rightarrow 0^{+}.
\end{equation*}%
Therefore 
\begin{align*}
& \Big\Vert \int_{0}^{t+h}(t+h-\tau )^{\alpha -2}E_{\alpha ,\alpha -1}(-m(\cdot
)(t+h-\tau )^{\alpha })f(u(\cdot ,\tau ))\;d\tau \\
& -\int_{0}^{t}(t-\tau )^{\alpha -2}E_{\alpha ,\alpha -1}(-m(\cdot )(t-\tau
)^{\alpha })f(u(\cdot ,\tau ))\;d\tau \Big\Vert _{L^{2}(\Omega )}\rightarrow
0,\quad h\rightarrow 0^{+}.
\end{align*}%
Hence $\Phi (u)\in C^{1}([0,T];L^{2}(\Omega ))$. Let us see that $\Phi
(u)\in C((0,T];D(A))$. Let $0<t\leq T$. First, from the proof of the linear
part (see \eqref{S1} and \eqref{D2-SS} with $\widetilde{\gamma }=0$) we
have 
\begin{equation*}
\Vert m(\cdot )E_{\alpha ,1}(-m(\cdot )t^{\alpha })u_{0}(\cdot )\Vert
_{L^{2}(\Omega )}^{2}\leq C^{2}t^{2\alpha \left( \gamma -1\right) }\Vert
u_{0}\Vert _{V_{\gamma }},
\end{equation*}%
and 
\begin{equation*}
\left\Vert m(\cdot )tE_{\alpha ,2}(-m(\cdot )t^{\alpha })u_{1}(\cdot
)\right\Vert _{L^{2}(\Omega )}\leq Ct^{1-\alpha }\Vert u_{1}\Vert
_{L^{2}(\Omega )}.
\end{equation*}%
Note that 
\begin{equation*}
\int_{0}^{t}\Vert f^{\prime }(u(\cdot ,\tau ))\partial _{\tau }u(\cdot ,\tau
)\Vert _{L^{2}(\Omega )}\;d\tau \leq Q_{1}(R)\int_{0}^{t}\Vert \partial
_{\tau }u(\cdot ,\tau ))\Vert _{L^{2}(\Omega )}\,d\tau .
\end{equation*}%
Since $\sup_{t\in \lbrack 0,T]}t^{\delta _{1}}\Vert \partial _{t}u(\cdot
,t)\Vert _{L^{2}(\Omega )}=T^{\delta _{1}}\sup_{t\in \lbrack 0,T]}\Vert
\partial _{t}u(\cdot ,t)\Vert _{L^{2}(\Omega )}\leq R$, then 
\begin{equation*}
\int_{0}^{t}\Vert f^{\prime }(u(\cdot ,\tau ))\partial _{\tau }u(\cdot ,\tau
)\Vert _{L^{2}(\Omega )}\;d\tau \leq Q_{1}(R)T^{1-\delta _{1}}R.
\end{equation*}%
Using the previous inequality, \eqref{C1-1} and integration by parts, we
obtain 
\begin{align*}
& \left\Vert \int_{0}^{t}m(\cdot )(t-\tau )^{\alpha -1}E_{\alpha ,\alpha
}(-m(\cdot )(t-\tau )^{\alpha })f(u(\cdot ,\tau ))\;d\tau \right\Vert
_{L^{2}(\Omega )} \\
& \leq C[Q_{2}(R)+Q_{1}(R)T^{1-\delta _{1}}R].
\end{align*}%
Hence 
\begin{align}
t^{\delta _{2}}\left\Vert A\Phi (u(\cdot ))(t)\right\Vert _{L^{2}\left(
\Omega \right) }& \leq C\left( T^{\alpha (\gamma -1)+\delta _{2}}\Vert
u_{0}\Vert _{V_{\gamma }}+T^{1-\alpha +\delta _{2}}\Vert u_{1}\Vert
_{L^{2}(\Omega )}\right. \notag\\
& +T^{\delta _{2}}Q_{2}(R)+T^{1+\delta _{2}-\delta _{1}}Q_{1}(R)R\left.
{}\right) .  \label{EQ-5.17} 
\end{align}%
We continue with the proof that $\Phi (u)\in C((0,T];D(A))$. Observe that
for $0<t\leq T$ 
\begin{align*}
& \Vert S_{1}(t+h)u_{0}(\cdot )-S_{1}(t)u_{0}(\cdot )\Vert _{D(A)}^{2} \\
& =\int_{\Omega }\left\vert m(\xi )[E_{\alpha ,1}(-m(\xi )(t+h)^{\alpha
})-m(\xi )E_{\alpha ,1}(-m(\xi )t^{\alpha })]u_{0}(\xi )\right\vert
^{2}\,d\mu ,
\end{align*}%
and by \eqref{IN-L2} we get 
\begin{align*}
& \left\vert m(\xi )E_{\alpha ,1}(-m(\xi )(t+h)^{\alpha })u_{0}(\xi )-m(\xi
)E_{\alpha ,1}(-m(\xi )t^{\alpha })u_{0}(\xi )\right\vert ^{2} \\
& \leq C^{2}t^{2\alpha (\gamma -1)}((m(\xi ))^{\gamma }u_{0}(\xi ))^{2}\in
L^{1}(\Omega ).
\end{align*}%
By the continuity of $t\mapsto S_{1}(t)x$ ($x\in \Omega $), we obtain from
the Dominated Convergence Theorem that 
\begin{equation*}
\Vert S_{1}(t+h)u_{0}(\cdot )-S_{1}(t)u_{0}(\cdot )\Vert
_{D(A)}^{2}\rightarrow 0\qquad (h\rightarrow 0^{+}).
\end{equation*}%
Analogously, we prove that 
\begin{equation*}
\Vert S_{2}(t+h)u_{1}(\cdot )-S_{2}(t)u_{1}(\cdot )\Vert
_{D(A)}^{2}\rightarrow 0\qquad (h\rightarrow 0^{+}).
\end{equation*}%
Next, 
\begin{align*}
& \Big\Vert \int_{0}^{t+h}(t+h-\tau )^{\alpha -1}E_{\alpha ,\alpha }(-m(\cdot
)(t+h-\tau )^{\alpha })f(u(\cdot ,\tau ))\;d\tau \\
& -\int_{0}^{t}(t-\tau )^{\alpha -1}E_{\alpha ,\alpha }(-m(\cdot )(t-\tau
)^{\alpha })f(u(\cdot ,\tau ))\;d\tau\Big \Vert _{D(A)} \\
& \leq \left\Vert m(\cdot )\int_{0}^{t}[Q_{\alpha ,\alpha }(t+h-\tau
)-Q_{\alpha ,\alpha }(t-\tau )]f(u(\cdot ,\tau ))\;d\tau \right\Vert
_{L^{2}(\Omega )} \\
& +\left\Vert m(\cdot )\int_{t}^{t+h}(t+h-\tau )^{\alpha -1}E_{\alpha
,\alpha }(-m(\cdot )(t+h-\tau )^{\alpha })f(u(\cdot ,\tau ))\;d\tau
\right\Vert _{L^{2}(\Omega )}=:I_{1}+I_{2}.
\end{align*}%
Let us see that $I_{1}\rightarrow 0$ as $h\rightarrow 0^{+}$. First, we need
to prove 
\begin{equation}
\lim_{h\rightarrow 0^{+}}\Vert \lbrack Q_{\alpha ,\alpha }(t+h-\tau
)-Q_{\alpha ,\alpha }(t-\tau )]f(u(\cdot ,\tau ))\Vert _{D(A)}=0.
\label{dct-1}
\end{equation}%
Indeed, since $t\mapsto Q_{\alpha ,\alpha }(t)f(u(\xi ,t))$ is continuous
for every $t\in \lbrack 0,T]$ and 
\begin{equation*}
|m(\xi )[Q_{\alpha ,\alpha }(t+h-\tau )-Q_{\alpha ,\alpha }(t-\tau
)]f(u(\cdot ,\tau ))|\leq C(t-\tau )^{-1}|f(u(\xi ,\tau ))|\in L^{2}(\Omega
),
\end{equation*}%
\eqref{dct-1} follows from Dominated Convergence Theorem. Moreover, 
\begin{align*}
I_{1} =&\left\Vert \int_{0}^{t}m(\cdot )[Q_{\alpha ,\alpha }(t+h-\tau
)-Q_{\alpha ,\alpha }(t-\tau )]f(u(\cdot ,\tau ))\;d\tau \right\Vert
_{L^{2}(\Omega )} \\
\leq & \Vert -f(u(\cdot ,t))[E_{\alpha ,1}(-m(\cdot )h^{\alpha })-1]\Vert
_{L^{2}(\Omega )}\\
&+\Vert f(u(\cdot ,0))[E_{\alpha ,1}(-m(\cdot )(t+h)^{\alpha
})-E_{\alpha ,1}(-m(\cdot )t^{\alpha })]\Vert _{L^{2}(\Omega )} \\
& +\Vert \int_{0}^{t}[E_{\alpha ,1}(-m(\cdot )(t-\tau +h)^{\alpha
})-E_{\alpha ,1}(-m(\cdot )(t-\tau )^{\alpha })]f^{\prime }(u(\cdot ,\tau
))\partial _{\tau }u(\cdot ,\tau )\;d\tau \Vert _{L^{2}(\Omega )} \\
& =:I_{11}+I_{12}+I_{13}.
\end{align*}%
Since $|f(\xi ,t)|\in L^{2}(\Omega )$ for $\xi \in \Omega $ and $0\leq t\leq
T$, then $I_{11}$ and $I_{12}$ converge to zero as $h\rightarrow 0^{+}$ by
the Dominated Convergence Theorem. On the other hand, we have that 
\begin{align*}
& |[E_{\alpha ,1}(-m(\cdot )(t-\tau +h)^{\alpha })-E_{\alpha ,1}(-m(\cdot
)(t-\tau )^{\alpha })]f^{\prime }(u(\cdot ,\tau ))\partial _{\tau }u(\cdot
,\tau )| \\
& \leq 2C|f^{\prime }(u(\cdot ,\tau ))\partial _{\tau }u(\cdot ,\tau )|\leq
2CQ_{1}(r)|\partial _{\tau }u(\cdot ,\tau )|\in L^{2}(\Omega ).
\end{align*}%
Then, the Dominated Convergence Theorem implies that 
\begin{equation*}
\Vert \lbrack E_{\alpha ,1}(-m(\cdot )(t-\tau +h)^{\alpha })-E_{\alpha
,1}(-m(\cdot )(t-\tau )^{\alpha })]f^{\prime }(u(\cdot ,\tau ))\partial
_{\tau }u(\cdot ,\tau )\Vert _{L^{2}(\Omega )}\rightarrow 0\quad
(h\rightarrow 0^{+}).
\end{equation*}%
Thus 
\begin{align*}
& \Vert \lbrack E_{\alpha ,1}(-m(\cdot )(t-\tau +h)^{\alpha })-E_{\alpha
,1}(-m(\cdot )(t-\tau )^{\alpha })]f^{\prime }(u(\cdot ,\tau ))\partial
_{\tau }u(\cdot ,\tau )\Vert _{L^{2}(\Omega )} \\
& \leq 2CQ_{1}(R)\Vert \partial _{\tau }u(\cdot ,\tau )\Vert _{L^{2}(\Omega
)}\leq 2CQ_{1}(R)T^{-\delta _{1}}R.
\end{align*}%
The Dominated Convergence Theorem guarantees that $I_{13}$ goes to zero as $%
h\rightarrow 0^{+}$. Next, we see that $I_{2}\rightarrow 0$ as $t\rightarrow
0^{+}$. Integration by parts gives 
\begin{align*}
I_{2}& \leq \Vert -f(u(\cdot ,t+h))+f(u(\cdot ,t))E_{\alpha ,1}(-m(\cdot
)h^{\alpha })\Vert _{L^{2}(\Omega )} \\
& +\left\Vert \int_{t}^{t+h}f^{\prime }(u(\cdot ,\tau ))\partial _{\tau
}u(\cdot ,\tau )E_{\alpha ,1}(-m(\cdot )(t-\tau +h)^{\alpha })\,d\tau
\right\Vert _{L^{2}(\Omega )} \\
& =:I_{21}+I_{22}.
\end{align*}%
By continuity we have that $I_{21}\rightarrow 0$ as $h\rightarrow 0^{+}$. On
the other hand, 
\begin{align*}
I_{22}& \leq \int_{t}^{t+h}\left\Vert f^{\prime }(u(\cdot ,\tau ))\partial
_{\tau }u(\cdot ,\tau )E_{\alpha ,1}(-m(\cdot )(t-\tau +h)^{\alpha
})\right\Vert _{L^{2}(\Omega )}\,d\tau \\
& \leq CQ_{1}(R)T^{-\delta _{1}}Rh\rightarrow 0\qquad (h\rightarrow 0^{+}).
\end{align*}%
The conclusion follows.

On the other hand, for $u_{1}(\cdot ,t),u_{2}(\cdot ,t)\in V_{\gamma }$, the
mean value theorem implies%
\begin{equation}
\Vert f(u_{1}(\cdot ,t))-f(u_{2}(\cdot ,t))\Vert _{L^{2}(\Omega )}\leq
CQ_{1}(R)\Vert u_{1}(\cdot ,t)-u_{2}(\cdot ,t)\Vert _{V_{\gamma }}.
\label{EQ.5.19}
\end{equation}%
Similarly to the foregoing estimates, we can exploit%
\begin{equation*}
\Vert \Phi (u_{1}(\cdot ))(t)-\Phi (u_{2}(\cdot ))(t)\Vert _{V_{\gamma
}}\leq CQ_{1}(R)\int_{0}^{t}(t-\tau )^{\alpha (1-\gamma )-1}\Vert
u_{1}(\cdot ,\tau )-u_{2}(\cdot ,\tau )\Vert _{V_{\gamma }}d\tau .
\end{equation*}%
Hence 
\begin{equation}
\Vert \Phi (u_{1}(\cdot ))(t)-\Phi (u_{2}(\cdot ))(t)\Vert _{V_{\gamma
}}\leq CQ_{1}(R)\Vert u_{1}-u_{2}\Vert _{C([0,T];V_{\gamma })}T^{\alpha
(1-\gamma )}.  \label{Eq-5-20}
\end{equation}%
In particular, since $v=0$ belongs to $V_{\gamma }$, $\Phi v=0$ and $\Phi
(u(\cdot ))(t)\in V_{\gamma }$ (because $u(\cdot ,t)\in V_{\gamma }$), we
get 
\begin{equation*}
\Vert f\Phi (u(\cdot ))(t)\Vert _{L^{2}(\Omega )}\leq CQ_{1}(R)RT^{\alpha
(1-\gamma )},
\end{equation*}%
where we used \eqref{Eq-5-20}. Then, from the identity $\mathbb{D}%
_{t}^{\alpha }\Phi u=-A\Phi u+f\Phi u$ and \eqref{EQ-5.17}, we have 
\begin{align}
t^{\delta _{2}}\left\Vert \mathbb{D}_{t}^{\alpha }\Phi (u(\cdot
))(t)\right\Vert _{L^{2}(\Omega )}& \leq t^{\delta _{2}}\left\Vert A\Phi
(u(\cdot ))(t)\right\Vert _{L^{2}\left( \Omega \right) }+t^{\delta
_{2}}\Vert f\Phi (u(\cdot ))(t))\Vert _{L^{2}(\Omega )}  \notag \\
& \leq C\left( T^{\alpha (\gamma -1)+\delta _{2}}\Vert u_{0}\Vert
_{V_{\gamma }}+T^{1-\alpha +\delta _{2}}\Vert u_{1}\Vert _{L^{2}(\Omega
)}\right.  \notag \\
& +\left. T^{\delta _{2}}Q_{2}(R)+(T^{1+\delta _{2}-\delta _{1}}+T^{\delta
_{2}+\alpha (1-\gamma )})Q_{1}(R)R\right) .  \label{EQU-5.21}
\end{align}%
From \eqref{C2}, \eqref{C3}, \eqref{EQ-5.17} and \eqref{EQU-5.21}, we obtain 
\begin{align}
& \Vert \Phi (u(\cdot ))(t)\Vert _{V_{\gamma }}+t^{\delta _{1}}\Vert \Phi
(u(\cdot ))^{\prime }(t)\Vert _{L^{2}(\Omega )}+t^{\delta _{2}}\left\Vert
A\Phi (u(\cdot ))(t)\right\Vert _{L^{2}\left( \Omega \right) }+t^{\delta
_{2}}\left\Vert \mathbb{D}_{t}^{\alpha }\Phi (u(\cdot ))(t)\right\Vert
_{L^{2}(\Omega )} \notag \\
& \leq C\left( \Vert u_{0}\Vert _{V_{\gamma }}+\Vert u_{1}\Vert
_{L^{2}(\Omega )}+Q_{2}\left( R\right) T^{\alpha (1-\gamma )}\right)  \notag
\\
& +C\left( T^{\delta _{1}+\alpha \gamma -1}\Vert u_{0}\Vert _{V_{\gamma
}}+T^{\delta _{1}}\Vert u_{1}\Vert _{L^{2}(\Omega )}+T^{\delta _{1}+\alpha
-1}Q_{2}\left( R\right) \right)  \notag \\
& +C\left( T^{\alpha (\gamma -1)+\delta _{2}}\Vert u_{0}\Vert _{V_{\gamma
}}+T^{1-\alpha +\delta _{2}}\Vert u_{1}\Vert _{L^{2}(\Omega )}+T^{\delta
_{2}}Q_{2}(R)+T^{1+\delta _{2}-\delta _{1}}Q_{1}(R)R\right)  \notag \\
& +C\left( T^{\alpha (\gamma -1)+\delta _{2}}\Vert u_{0}\Vert _{V_{\gamma
}}+T^{1-\alpha +\delta _{2}}\Vert u_{1}\Vert _{L^{2}(\Omega )}+T^{\delta
_{2}}Q_{2}(R)+(T^{1+\delta _{2}-\delta _{1}}+T^{\delta _{2}+\alpha (1-\gamma
)})Q_{1}(R)R\right) .   \label{main-in-non}
\end{align}%
Letting now 
\begin{equation*}
R\geq 2C(\Vert u_{0}\Vert _{V_{\gamma }}+\Vert u_{1}\Vert _{L^{2}(\Omega )}),
\end{equation*}%
we can find a sufficiently small time $T>0$ such that 
\begin{align}
& CQ_{2}(R)(T^{\alpha (1-\gamma )}+T^{\alpha -1+\delta _{1}}+T^{\delta _{2}})
\notag\\
& +C\max \{T^{\delta _{1}+\alpha \gamma -1},T^{\delta _{1}}\}(\Vert
u_{0}\Vert _{V_{\gamma }}+\Vert u_{1}\Vert _{L^{2}(\Omega )})  \notag \\
& +C\max \{T^{\alpha (\gamma -1)+\delta _{2}},T^{1-\alpha +\delta
_{2}}\}(\Vert u_{0}\Vert _{V_{\gamma }}+\Vert u_{1}\Vert _{L^{2}(\Omega )}) 
\notag \\
& +CQ_{1}(R)(T^{1+\delta _{2}-\delta _{1}}+T^{\delta _{2}+\alpha (1-\gamma
)})R\leq \frac{R}{2}.  \label{Eq-519} 
\end{align}%
Thus, taking into account \eqref{main-in-non}, we deduce 
\begin{align*}
\Vert \Phi u\Vert _{\delta _{1},\delta _{2},T}=& \sup_{0\leq t\leq T}\left\{
\Vert \Phi (u(\cdot ))(t)\Vert _{V_{\gamma }}+\Vert \partial _{t}\Phi
(u(\cdot ))(t)\Vert _{L^{2}(\Omega )}+t^{\delta _{2}}\left\Vert A\Phi
(u(\cdot ))(t)\right\Vert _{L^{2}(\Omega )}\right. \\
& \left. +t^{\delta _{2}}\left\Vert \mathbb{D}_{t}^{\alpha }\Phi (u(\cdot
))(t)\right\Vert _{L^{2}(\Omega )}\right\} \leq R.
\end{align*}%
From here we deduce that $\Phi u\in \mathcal{B}_{\delta _{1},\delta
_{2},T,R} $. Next, we show that by choosing a possibly smaller $T>0$, $\Phi $
is a contraction on $\mathcal{B}_{\delta _{1},\delta _{2},T,R}$. Indeed, let 
$u,v\in \mathcal{B}_{\delta _{1},\delta _{2},T,R}$. Analogously to %
\eqref{Eq-5-20}, we have 
\begin{equation*}
\Vert \Phi (u(\cdot ))^{\prime }\left( t\right) -\Phi (v(\cdot ))^{\prime
}\left( t\right) \Vert _{L^{2}(\Omega )}\leq CQ_{1}(R)\Vert u-v\Vert
_{C([0,T];V_{\gamma })}T^{\alpha -1},
\end{equation*}%
and 
\begin{equation*}
t^{\delta _{1}}\Vert \Phi (u(\cdot ))^{\prime }\left( t\right) -\Phi
(v(\cdot ))^{\prime }\left( t\right) \Vert _{L^{2}(\Omega )}\leq
CQ_{1}(R)\Vert u-v\Vert _{C([0,T];V_{\gamma })}T^{\delta _{1}+\alpha -1}.
\end{equation*}%
Now, integration by parts gives 
\begin{align*}
& \Vert A\Phi (u(\cdot ))(t)-A\Phi (v(\cdot ))(t)\Vert _{L^{2}(\Omega )} \\
& \leq CQ_{1}(R)\Vert u-v\Vert _{C([0,T];V_{\gamma
})}+CQ_{1}(R)\int_{0}^{t}\tau ^{-\delta _{1}}\tau ^{\delta _{1}}\Vert
\partial _{\tau }u(\cdot ,\tau )-\partial _{\tau }v(\cdot ,\tau )\Vert
_{L^{2}(\Omega )}\,d\tau \\
& \leq CQ_{1}(R)\Vert u-v\Vert _{C([0,T];V_{\gamma })}+CQ_{1}(R)T^{1-\delta
_{1}}\Vert u-v\Vert _{\delta _{1},\delta _{2},T}.
\end{align*}%
Hence 
\begin{equation*}
t^{\delta _{2}}\Vert A\Phi (u(\cdot ))(t)-A\Phi (v(\cdot ))(t)\Vert
_{L^{2}(\Omega )}\leq CQ_{1}(R)(T^{\delta _{2}}+T^{1+\delta _{2}-\delta
_{1}})\Vert u-v\Vert _{\delta _{1},\delta _{2},T}.
\end{equation*}%
On the other hand, since $\Phi u,\Phi v\in \mathcal{B}_{\delta _{1},\delta
_{2},T,R}$, we obtain from \eqref{Eq-5-20} that 
\begin{equation*}
\Vert f\Phi (u(\cdot ))(t)-f\Phi (v(\cdot ))(t)\Vert _{L^{2}(\Omega )}\leq
C^{2}(Q_{1}(R))^{2}\Vert u-v\Vert _{C([0,T];V_{\gamma })}T^{\alpha (1-\gamma
)}.
\end{equation*}%
From here we deduce that 
\begin{align*}
& t^{\delta _{2}}\Vert \mathbb{D}_{t}^{\alpha }\Phi (u(\cdot ))(t)-\mathbb{D}%
_{t}^{\alpha }\Phi (v(\cdot ))(t)\Vert _{L^{2}(\Omega )}\leq t^{\delta
_{2}}\Vert A\Phi (u(\cdot ))(t)-A\Phi (v(\cdot ))(t)\Vert _{L^{2}(\Omega )}
\\
& \leq CQ_{1}(R)(T^{\delta _{2}}+T^{1+\delta _{2}-\delta
_{1}}+Q_{1}(R)T^{\delta _{2}+\alpha (1-\gamma )})\Vert u-v\Vert _{\delta
_{1},\delta _{2},T}.
\end{align*}%
Hence 
\begin{align*}
& \Vert \Phi u-\Phi v\Vert _{\delta _{1},\delta _{2},T} \\
& \leq CQ_{1}(R)(T^{\alpha (1-\gamma )}+T^{\delta _{1}+\alpha -1}+T^{\delta
_{2}}+T^{1+\delta _{2}-\delta _{1}}+Q(R)T^{\delta _{2}+\alpha (1-\gamma
)})\Vert u-v\Vert _{\delta _{1},\delta _{2},T}.
\end{align*}%
Choosing $T>0$ smaller than the one determined by \eqref{Eq-519} such that 
\begin{equation*}
CQ_{1}(R)(T^{\alpha (1-\gamma )}+T^{\delta _{1}+\alpha -1}+T^{\delta
_{2}}+T^{1+\delta _{2}-\delta _{1}}+Q_{1}(R)T^{\delta _{2}+\alpha (1-\gamma
)})<1,
\end{equation*}%
it follows that the mapping $\Phi $ is a contraction. In a similar way than
before, we can show that $u$ can be extended by continuity from $[0,T_{\ast
}]$ for some $T_{\ast }$, to the interval $\left[ 0,T_{\ast }+\varsigma %
\right] ,$ for some $\varsigma >0$. The rest of the proof (associated with
the extension argument and (\ref{bl}))\ goes in an analogous fashion as in
the proof of Theorem \ref{main-weak}. The proof is complete.
\end{proof}

The following result gives information about the regularity of the second derivative of strong solutions.
\begin{theorem}
\label{cor-str}Let the assumptions \emph{\textbf{(Hf3)}} and $u$ be a weak solution in the sense of Definition  \ref{def-weak}.  Let $1/2\leq \gamma\leq 1$, $0\leq \widetilde{\gamma}\leq 1$, 
$0\leq \theta <1$ and
\begin{align*}
1\leq \zeta'<\frac{1}{2-\alpha(1-\theta)},\quad  \frac{1}{\zeta'}=1-\frac{1}{\zeta}.
\end{align*}
Assume that $u_{0}\in V_{\gamma },$ $%
u_{1}\in V_{\widetilde{\gamma }}$ with both $\gamma -\theta ,$ $\widetilde{%
\gamma }-\theta \in \left[ 0,1\right] $. Then $u$ is a strong solution in the sense of %
\ref{def-strong} and satisfies the following estimate:%
\begin{align*}
&\left\Vert \partial _{t}^{2}u\right\Vert _{V_{\theta}}\\
&\leq  C\left( T^{\alpha (\gamma-\theta)-2}\Vert u_{0}\Vert _{V_{\gamma
}}+T^{\alpha (\tilde{\gamma}-\theta-1 )}\Vert u_{1}\Vert _{V_{\tilde{\gamma} }}\right.
 \\
& \qquad \left. CT^{\frac{1}{\zeta'}+\alpha(1-\theta)-2}\|f'(u)\|_{L^{\infty}((0,T);L^{\infty}(\Omega))}\,\|\partial_tu\|_{L^{\zeta}((0,T);V_{\tilde{\gamma}})}+T^{\alpha-2 -\alpha\theta}\Vert f(u(\cdot ,0))\Vert
_{L^{2}( \Omega)}\right). \notag
\end{align*}%
\end{theorem}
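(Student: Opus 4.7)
The plan is to reduce the semilinear estimate to the linear estimate \eqref{est-strong} of Theorem \ref{theo-strong}(b), applied with the source term $F(\cdot,t) := f(u(\cdot,t))$ in place of $f$. Since $u$ is a weak solution of \eqref{EQ-NL}, it satisfies the Duhamel representation \eqref{sol-spec} with $f$ replaced by $F$, and differentiating twice in $t$ (as in \eqref{DD-2}) decomposes $\partial_t^2 u$ into four pieces governed by $u_0$, $u_1$, $\partial_t F$, and $F(\cdot,0)$.

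The key preparatory step is to verify that $F \in W^{1,\zeta}(0,T;L^2(\Omega))$ with a quantitative bound. In the notation of Theorem \ref{theo-strong}(b), this corresponds to choosing the parameter ``$p$'' there as $p=\zeta'$, so that $p/(p-1)=\zeta$ matches the time integrability of $\partial_t F$. From (\textbf{Hf3}), the weak-solution regularity $u \in C([0,T];V_\gamma)$, and the embedding $V_\gamma \hookrightarrow L^\infty(\Omega)$, I get $\|F(\cdot,t)\|_{L^2(\Omega)} \leq \mu(\Omega)^{1/2}\, Q_2(C\|u\|_{C([0,T];V_\gamma)})$. The chain rule gives $\partial_t F(\cdot,t) = f'(u(\cdot,t))\,\partial_t u(\cdot,t)$, and combining $\sup_t \|f'(u(\cdot,t))\|_{L^\infty(\Omega)} \leq Q_1(C\|u\|_{C([0,T];V_\gamma)})$ with the embedding $V_{\widetilde\gamma} \hookrightarrow L^2(\Omega)$ and H\"older in time produces the crucial bound
\begin{equation*}
\|\partial_t F\|_{L^{\zeta}(0,T; L^2(\Omega))} \leq C\,\|f'(u)\|_{L^\infty((0,T); L^\infty(\Omega))}\,\|\partial_t u\|_{L^{\zeta}(0,T; V_{\widetilde\gamma})}.
\end{equation*}

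Applying \eqref{est-strong} with $p=\zeta'$ to the source $F$, the hypothesis $\zeta' < 1/(2-\alpha(1-\theta))$ coincides exactly with the admissibility condition $1/\zeta' + \alpha(1-\theta) - 2 > 0$ required there. The four resulting contributions match the statement: $T^{\alpha(\gamma-\theta)-2}\|u_0\|_{V_\gamma}$ and $T^{\alpha(\widetilde\gamma-\theta-1)}\|u_1\|_{V_{\widetilde\gamma}}$ come from the initial data; the term $T^{1/\zeta'+\alpha(1-\theta)-2}\|\partial_t F\|_{L^\zeta(L^2)}$ combined with the chain-rule bound yields the nonlinear contribution $CT^{1/\zeta'+\alpha(1-\theta)-2}\|f'(u)\|_{L^\infty(L^\infty)}\|\partial_t u\|_{L^\zeta(V_{\widetilde\gamma})}$; and $T^{\alpha(1-\theta)-2}\|F(\cdot,0)\|_{L^2(\Omega)}$ gives the last term. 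The elevation from weak to strong solution in the sense of Definition \ref{def-strong} follows from the identity $\mathbb{D}_t^\alpha u = -Au + F$ together with the linear estimate \eqref{est-mathbbd} applied to the source $F$, which places $\mathbb{D}_t^\alpha u$ and $Au$ in $L^l(0,T; L^2(\Omega))$ for the admissible range of $l$.

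The principal technical obstacle is the chain-rule bound on $\partial_t f(u)$: it requires simultaneously the $L^\infty$-boundedness of $f'(u)$ (via (\textbf{Hf3}) and $V_\gamma \hookrightarrow L^\infty(\Omega)$) and a H\"older pairing that exploits $V_{\widetilde\gamma} \hookrightarrow L^2(\Omega)$. The narrow window $\zeta' \in [1,\, 1/(2-\alpha(1-\theta)))$ is precisely the intersection of the compatibility conditions coming from the linear source-regularity requirement and the time integrability of $\partial_t u$, and it is this sharp window that makes the joint space-time scaling delicate.
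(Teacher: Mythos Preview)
Your approach is essentially the same as the paper's: both use the embedding $V_\gamma\hookrightarrow L^\infty(\Omega)$ from \textbf{(Hf3)} to get $f'(u)\in L^\infty((0,T);L^\infty(\Omega))$, apply the chain rule $\partial_t F=f'(u)\partial_t u$, and then estimate $\partial_t^2 u$ via the four-term decomposition \eqref{DD-2}. The only cosmetic difference is that you invoke Theorem~\ref{theo-strong}(b) as a black box with $p=\zeta'$, whereas the paper re-derives those four pointwise bounds directly; the computations are identical.

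There is, however, one genuine omission. The theorem claims $u$ is a strong solution in the sense of Definition~\ref{def-strong}, which requires $u\in W^{1,\zeta}(0,T;V_{\widetilde\gamma})$, i.e.\ $\partial_t u\in L^\zeta(0,T;V_{\widetilde\gamma})$. You use this quantity on the right-hand side of your bound for $\|\partial_t F\|_{L^\zeta(L^2)}$ without ever establishing that it is finite; the weak-solution regularity in Definition~\ref{def-weak} only gives $\partial_t u\in L^2(0,T;L^2(\Omega))$. The paper closes this gap by returning to the representation \eqref{D1-1} and estimating each of $S_1'(t)u_0$, $S_2'(t)u_1$, and $(S_3'\ast f(u))(t)$ separately in $L^\zeta(0,T;V_{\widetilde\gamma})$, using \eqref{IN-L1} together with the condition $\alpha(\gamma-\widetilde\gamma)+\zeta^{-1}>1$ from Definition~\ref{def-strong} (this is where that otherwise mysterious constraint on $\zeta$ enters). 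Without this step your argument is circular: you need $\partial_t u\in L^\zeta(V_{\widetilde\gamma})$ to place $F$ in $W^{1,\zeta}(L^2)$, which is the hypothesis of the linear theorem you are invoking, and you also need it to conclude the strong-solution regularity itself. The same issue affects your appeal to \eqref{est-mathbbd} for the $L^l$ membership of $\mathbb{D}_t^\alpha u$ and $Au$: that estimate also presupposes $F\in W^{1,\,p/(p-1)}(L^2)$.
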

\begin{proof}
Indeed, 
each weak solution turns out to be bounded, namely,%
\begin{equation}
u\in C\left( \left[ 0,T\right] ;V_{\tilde{\gamma} }\right) \subset C\left( \left[ 0,T%
\right] ;L^{\infty }\left(\Omega\right) \right) .  \label{con3}
\end{equation}%
Consequently, one has $f^{^{\prime }}\left( u\right) \in L^{\infty }\left(
(0,T);L^{\infty }\left( \Omega\right) \right) $ from \eqref{con3} and the
assumption (\textbf{Hf3}). 
Indeed, since $u\in C\left( \left[ 0,T%
\right] ;L^{\infty }\left(\Omega\right) \right)$, there exists $K_0>0$ such that  $\mbox{ess sup}_{\xi\in \Omega}\,|u(\xi,t)|<K_0$ for all $t\in [0,T]$. Then, the monotone property of $Q_1$ implies that $Q_1(|u(\xi,t|)\leq Q_1(K_0)$.
The claim follows from
\begin{align*}
\mbox{ess sup}_{t\in[0,T]}\|f^{^{\prime }}(u(\cdot,t)\|_{L^{\infty}(\Omega)}
&=\mbox{ess sup}_{t\in[0,T]}\left[\mbox{ess sup}_{\xi\in \Omega}|f'(u(\xi,t))|\right]\\
&\leq C\,\,\mbox{ess sup}_{t\in[0,T]}\left[\mbox{ess sup}_{\xi\in \Omega}\,Q_1(|u(\xi,t)|)\right].
\end{align*}
Now, we show that $\partial_t u\in L^{\zeta}((0;T);V_{\tilde{\gamma}})$.  Indeed, 
\begin{align*}
&\left(\int_0^T\|-m(\cdot)t^{\alpha-1}E_{\alpha,\alpha}(-m(\cdot)t^{\alpha})u_0(\cdot)\|^{\zeta}_{V_{\tilde{\gamma}}}\;dt\right)^{1/\zeta}\\
&\leq \left(\int_0^T\left(\int_{\Omega}|m(\xi)^{\tilde{\gamma}}t^{\alpha-1}E_{\alpha,\alpha}(-m(\xi)t^{\alpha})u_0(\xi)|^2\;d\mu\right)^{\zeta/2}\;dt\right)^{1/\zeta}\\
&\leq C\left(\int_0^T t^{\zeta(\alpha-1-\alpha(\tilde{\gamma}-\gamma))}\left(\int_{\Omega}|(m(\xi))^{\gamma}u_0(\xi)|^2\;d\mu\right)^{\zeta/2}\;dt\right)^{1/\zeta}\\
&=CT^{\alpha-1+\alpha(\gamma-\tilde{\gamma})+\frac{1}{\zeta}}\|u_0\|_{V_{\gamma}},
\end{align*}
where we have used that $\alpha(\gamma-\tilde{\gamma})+\frac{1}{\zeta}>1$.
Also,
\begin{align*}
&\left(\int_0^t\|E_{\alpha,1}(-m(\cdot)t^{\alpha})u_1(\cdot)\|^{\zeta}_{V_{\tilde{\gamma}}}\;dt\right)^{1/\zeta}\\
&\leq \left(\int_0^T\left(\int_{\Omega}|(m(\xi))^{\tilde{\gamma}}E_{\alpha,1}(-m(\xi)t^{\alpha})u_1(\xi)|^2\;d\mu\right)^{\zeta/2}\;dt\right)^{1/\zeta}\\
&\leq C\left(\int_0^T \left(\int_{\Omega}|(m(\xi))^{\tilde{\gamma}}u_1(\xi)|^2\;d\mu\right)^{\zeta/2}\;dt\right)^{1/\zeta}\\
&=CT^{1/\zeta}\|u_1\|_{V_{\tilde{\gamma}}},
\end{align*}
and
\begin{align*}
&\left(\int_0^T\left\|\int_0^t (t-\tau)^{\alpha-2}E_{\alpha,\alpha-1}(-m(\cdot)(t-\tau)^{\alpha})f(u(\cdot,\tau))\;d\tau \right\|^{\zeta}_{V_{\tilde{\gamma}}}\;dt\right)^{1/\zeta}\\
&\leq C\left(\int_0^T\left(\int_0^t(t-\tau)^{\alpha-2-\alpha\tilde{\gamma}} \left(\int_{\Omega}|f(u(\xi,\tau))|^2\;d\mu\right)^{1/2}\;d\tau\right)^{\zeta}\;dt\right)^{1/\zeta}\\
&\leq C\left(\int_0^T\left(\int_0^t(t-\tau)^{\alpha-2-\alpha\tilde{\gamma}} \left(\int_{\Omega}(Q_2(|u(\xi,\tau)|))^{2}\;d\mu\right)^{1/2}\;d\tau\right)^{\zeta}\;dt\right)^{1/\zeta}\\
&\leq CT^{\alpha(1-\tilde{\gamma})-1+\frac{1}{\zeta}}Q_2(K_0)\mu(\Omega)^{1/2},
\end{align*}
where in the last inequality we have used that $\alpha(1-\tilde{\gamma})+\zeta^{-1}\geq \alpha(\gamma-\tilde{\gamma})+\zeta^{-1}>1$, the boundeness of $u$, the monotone property of $Q_2$ and the fact that $\Omega$ is of finite measure.  
This in turn implies $f'(u),\partial_t u\in L^{\zeta}(0,T;V_{\tilde{\gamma}})$. We claim that 
$\mathbb{D}%
_{t}^{\alpha }u=-Au+f(u)\in L^{l}\left( (0,T);L^{2}\left( \Omega \right) \right) $. Indeed, using \eqref{IN-L2} and the fact that $1\leq l<\frac{1}{\alpha(1-\gamma)}$, we get the following estimates: 
\begin{align*}
\left(\int_0^T\|m(\cdot)E_{\alpha,1}(-m(\cdot)t^{\alpha})u_0(\cdot)\|^l_{L^2(\Omega)}\;dt\right)^{1/l}&\leq C\left(\int_0^Tt^{l(\alpha\gamma-\alpha)}\left( \int_{\Omega}\left|(m(\xi))^{\gamma}u_0(\xi)\right|^2\;d\mu\right)^{l/2}dt\right)^{1/l}\\
\nonumber &\leq CT^{\alpha\gamma-\alpha+\frac{1}{l}}\|u_0\|_{V_{\gamma}}.
\end{align*}
Now, since $1\leq l<\frac{1}{\alpha(1-\tilde{\gamma})-1}$ then 
\begin{equation*}
\left(\int_0^T\left\| m(\cdot)tE_{\alpha,2}(-m(\cdot)t^{\alpha})u_1(\cdot)\right \|^{l} _{L^{2}(\Omega)}dt\right)^{1/l}\leq CT^{1-\alpha+\alpha\tilde{\gamma} +\frac{1}{l}}\Vert
u_{1}\Vert _{V_{\tilde{\gamma}}}.  
\end{equation*}%
Using \eqref{Est-MLF2} and integrating by parts, we get that 
\begin{align*}
& \int_{0}^{t}f(u(\xi,\tau))m(\xi)(t-\tau )^{\alpha -1}E_{\alpha ,\alpha
}(-m(\xi)(t-\tau )^{\alpha })\;d\tau  \\
=& -f(u(\xi,t))+E_{\alpha ,1}(-m(\xi)t^{\alpha
})f(u(\xi,0)) \\
& +\int_{0}^{t}f'(u(\xi,\cdot))\partial_{\tau}u(\xi,\tau)E_{\alpha ,1}(-m(\xi)(t-\tau)^{\alpha })\;d\tau,\quad \xi\in \Omega,\,t>0.
\end{align*}%
It follows from the triangle inequality that
\begin{align*}
&\left\|\int_0^tm(\xi)(t-\tau)^{\alpha-1}E_{\alpha,\alpha}(-m(\xi)(t-\tau)^{\alpha})f(u(\xi,\tau))\;d\tau\right\|_{L^{l}(0,T;L^2(\Omega))}\\
&\leq \left\|f(u)\right\|_{L^{l}(0,T;L^2(\Omega))}+ \left\|E_{\alpha ,1}(-m(\xi)t^{\alpha
})f(u(\xi,0))\right\|_{L^{l}(0,T;L^2(\Omega))}\\
&+\left\|\int_{0}^{t}f'(u(\xi,\tau))\partial_{\tau}u(\xi,\tau)E_{\alpha ,1}(-m(\xi)(t-\tau)^{\alpha })\;d\tau\right\|_{L^{l}(0,T;L^2(\Omega))}
\end{align*}
Now,  
\begin{align*}
&\left\|\int_{0}^{t}f'(u(\xi,\tau))\partial_{\tau}u(\xi,\tau)E_{\alpha ,1}(-m(\xi)(t-\tau)^{\alpha })\;d\tau\right\|_{L^2(\Omega)}\\
&\leq 
\int_{0}^{t}\left\|f'(u(\xi,\tau))\partial_{\tau}u(\xi,\tau)E_{\alpha ,1}(-m(\xi)(t-\tau)^{\alpha })\right\|_{L^2(\Omega)}\;d\tau\\
&\leq C \int_{0}^{t}\left\|f'(u(\xi,\tau))\partial_{\tau}u(\xi,\tau)\right\|_{L^2(\Omega)}\;d\tau\leq C\|f'(u)\partial_{t}u\|_{L^1(0,T;L^2(\Omega))}\\
&\leq C\|f'(u)\|_{L^{\zeta'}(0,T;L^2(\Omega))}\,\|\partial_{t}u\|_{L^{\zeta}(0,T;L^2(\Omega))}\\
&\leq C\|f'(u)\|_{L^{\infty}(0,T;L^{\infty}(\Omega))}\,\|\partial_{t}u\|_{L^{\zeta}(0,T;V_{\tilde{\gamma}})}.
\end{align*}
Therefore
\begin{align*}
\left\|\int_{0}^{t}f'(u(\xi,\tau))\partial_{\tau}u(\xi,\tau)E_{\alpha ,1}(-m(\xi)(t-\tau)^{\alpha })\;d\tau\right\|_{L^{l}(0,T;L^2(\Omega))}\leq  CT^{\frac{1}{l}}\|f'(u)\|_{L^{\infty}(0,T;L^{\infty}(\Omega))}\,\|\partial_{t}u\|_{L^{\zeta}(0,T;V_{\tilde{\gamma}})}.
\end{align*}
Consequently,
\begin{align*}
&\left\|\int_0^tm(\xi)(t-\tau)^{\alpha-1}E_{\alpha,\alpha}(-m(\xi)(t-\tau)^{\alpha})f(u(\xi,\tau))\;d\tau\right\|_{L^{l}(0,T;L^2(\Omega))}
\\&\leq \|f(u)\|_{L^{l}((0,T);L^{2}(\Omega))}+CT\Vert f(u(\cdot ,0))\Vert_{L^{2}(\Omega)}\\
&+CT^{\frac{1}{l}}\|f'(u)\|_{L^{\infty}(0,T;L^{\infty}(\Omega))}\,\|\partial_{t}u\|_{L^{\zeta}(0,T;V_{\tilde{\gamma}})}.
\end{align*}

Thus, the claim is proved.  

Next, we prove the estimated of the second derivative.
\begin{align*}
\partial _{t}^{2}u(\cdot ,t)=& m(\cdot)t^{\alpha -2}E_{\alpha ,\alpha -1}(-m(\cdot)t^{\alpha })u_0(\cdot)-m(\cdot)E_{\alpha ,\alpha }(-m(\cdot)t^{\alpha })u_1(\cdot) \\
& +\int_{0}^{t}(t-\tau
)^{\alpha -2}E_{\alpha ,\alpha -1}(-m(\cdot)(t-\tau )^{\alpha })f'(u(\cdot,\tau))u_{\tau}(\cdot,\tau)d\tau
  \notag \\
&+t^{\alpha-2}E_{\alpha,\alpha-1}(-m(\cdot)t^\alpha)f(u(\cdot,0)).
\end{align*}%
By hypothesis we have that  $\alpha(1-\theta)-2+\frac{1}{\zeta'}>0$. Then
\begin{align*}
&\left\|\int_{0}^{t}(t-\tau
)^{\alpha -2}E_{\alpha ,\alpha -1}(-m(\cdot)(t-\tau )^{\alpha })
f'(u(\cdot,\tau))u_{\tau}(\cdot,\tau)d\tau\right\|_{V_{\theta}}\\
&\leq C \int_{0}^{t}(t-\tau
)^{\alpha -2-\alpha\theta}
\|f'(u(\cdot,\tau))u_{\tau}(\cdot,\tau)\|_{L^2(\Omega)}\,d\tau\\
&\leq C\|f'(u)\|_{L^{\infty}((0,T);L^{\infty}(\Omega))} \int_{0}^{t}(t-\tau
)^{\alpha -2-\alpha\theta}
\|u_{\tau}(\cdot,\tau)\|_{V_{\tilde{\gamma}}}\,d\tau\\
&\leq CT^{\frac{1}{\zeta'}+\alpha(1-\theta)-2}\|f'(u)\|_{L^{\infty}((0,T);L^{\infty}(\Omega))}\,\|\partial_tu\|_{L^{\zeta}((0,T);V_{\tilde{\gamma}})}.
\end{align*}
Moreover, proceeding as in the proof of \eqref{est-strong},  we obtain
\begin{align*}
&\left\Vert \partial _{t}^{2}u\right\Vert _{V_{\theta}}\\
&\leq  C\left( T^{\alpha (\gamma-\theta)-2}\Vert u_{0}\Vert _{V_{\gamma
}}+T^{\alpha (\tilde{\gamma}-\theta-1 )}\Vert u_{1}\Vert _{V_{\tilde{\gamma} }}\right.
 \\
& \qquad \left. CT^{\frac{1}{\zeta'}+\alpha(1-\theta)-2}\|f'(u)\|_{L^{\infty}((0,T);L^{\infty}(\Omega))}\,\|\partial_tu\|_{L^{\zeta}((0,T);V_{\tilde{\gamma}})}+T^{\alpha-2 -\alpha\theta}\Vert f(u(\cdot ,0))\Vert
_{L^{2}( \Omega)}\right) .  \notag
\end{align*}%
The proof is finished.

\end{proof}


\section{Examples}

In \cite{AGKW} the examples presented were: elliptic operators with Dirichlet boundary condition on an arbitrary bounded nonempty open set $\Omega\subset \mathbb R^n,$ the Dirichlet to Neumann operator and the fractional Laplace operator on a bounded open subset of $\mathbb R^n$ with external Dirichlet condition. \\  Those examples are still valid here, but we need to verify that the corresponding operator $A$ is strictly positive.  The last example below corresponds to Schr\"odinger operators. 

 \

\begin{enumerate}

  \item{{\bf Elliptic operators on open subsets of $ \mathbb R^n$.}} Let $\Omega $ be a bounded domain of $\mathbb{R}^{d}$ ($d\geqslant 1$). In
what follows, we define $\mathcal{L}$ by the differential operator 
\begin{equation}\label{EO}
\mathcal{L}u(x)=-\sum_{i,j=1}^{d}\partial _{x_{i}}\left( a_{ij}(x)\partial
_{x_{j}}u\right) ,\quad x\in \Omega ,
\end{equation}%
where $a_{ij}=a_{ji}\in L^{\infty }(\Omega )$, satisfy the ellipticity
condition 
\begin{equation*}
\sum_{i,j=1}^{d}a_{ij}(x)\xi _{i}\xi _{j}\geq c|\xi |^{2},\quad x\in {\Omega 
},\ \xi =(\xi _{1},\ldots ,\xi _{d})\in \mathbb{R}^{d}.
\end{equation*}%
In (\ref{EQ-NL0}), we consider the Dirichlet operator\footnote{%
Of course, the same differential operator $\mathcal{A}$ subject to Neumann
and/or Robin boundary conditions may be also allowed (see \cite{GalWarma2020}). The
results in this section remain also valid in these cases without any
modifications to the main statements.} which is also the case investigated
in detail by \cite{Ki-Ya} (assuming only that $d\leq 3$, and $\Omega $ is of
class $\mathcal{C}^{2}$ and $a_{ij}\in C^{1}(\overline{\Omega })$)\textbf{. }%
Let $A$ be the realization in $L^{2}(\Omega )$ of $\mathcal{L}$ with the
Dirichlet boundary condition $u=0$ in ${\mathbb{\partial }}\Omega $. That
is, $A$ is the self-adjoint operator in $L^{2}(\Omega )$ associated with the
Dirichlet form%
\begin{equation}
\mathcal{E}_{A}(u,v)=\sum_{i,j=1}^{d}\int_{\Omega }a_{ij}(x)\partial
_{x_{j}}u\partial _{x_{i}}vdx,\;\;u,v\in V_{1/2}=W_{0}^{1,2}(\Omega ).
\label{ex-dir}
\end{equation}%

  \item{{\bf The fractional Laplace operator.}} Let $\Omega $ be a bounded domain of $\mathbb{R}^{d}$ ($d\geqslant 1$).  Let $0<s<1$ and the form $\mathcal{E}_{A}$ with $D(\mathcal{E}%
_{A}):=W_{0}^{s,2}(\overline{\Omega })$ be defined by%
\begin{equation*}
\mathcal{E}_{A}(u,v):=\frac{C_{d,s}}{2}\int_{{\mathbb{R}}^{d}}\int_{{\mathbb{%
R}}^{d}}\frac{(u(x)-u(y))(v(x)-v(y))}{|x-y|^{d+2s}}dxdy.
\end{equation*}%
Let $A$ be the self-adjoint operator on $L^{2}(\Omega )$ associated with $%
\mathcal{E}_{A}$.
An integration by parts
argument gives that%
\begin{equation*}
D(A)=\Big\{u\in W_{0}^{s,2}(\overline{\Omega }):\;(-\Delta )^{s}u\in
L^{2}(\Omega )\Big\},\;Au=(-\Delta )^{s}u.
\end{equation*}%
The operator $A$ is the realization in $L^{2}(\Omega )$ of the fractional
Laplace operator $(-\Delta )^{s}$ with the Dirichlet exterior condition $u=0$
in ${\mathbb{R}}^{d}\backslash \Omega $. Here, $(-\Delta )^{s}$ is given by
the following singular integral%
\begin{align*}
(-\Delta )^{s}u(x)& :=C_{d,s}\mbox{P.V.}\int_{{\mathbb{R}}^{d}}\frac{%
u(x)-u(y)}{|x-y|^{d+2s}}\;dy \\
& =C_{d,s}\lim_{\varepsilon \downarrow 0}\int_{\{y\in {\mathbb{R}}%
^{d}\;|x-y|>\varepsilon \}}\frac{u(x)-u(y)}{|x-y|^{d+2s}}\;dy,\;\;x\in {%
\mathbb{R}}^{d},
\end{align*}%
provided that the limit exists, where $C_{d,s}$ is a normalization constant.
We refer to \cite{BBC,Caf3,War} and their references for more information on
the fractional Laplace operator. It has been shown in \cite{SV2} that the
operator $A$ has a compact resolvent and its first eigenvalue $\lambda
_{1}>0 $. In addition, from \cite[Theorem 6.6]{War} and \cite[Example 2.3.5]%
{GalWarma2020} we can deduce that $(\mathcal{E}_{A},W_{0}^{s,2}(\overline{\Omega }))$
is a Dirichlet space.
  
   \item{{\bf Dirichlet to Neumann operator.}} Let $\Omega $ be a bounded domain of $\mathbb{R}^{d}$ ($d\geqslant 1$).  Here we give an example where the metric space $X$ is given by the boundary
of an open set. Let $\Delta _{D}$ be the Laplace operator with Dirichlet boundary conditions (see (a) with $\mathcal{L}=-\Delta$). We denote its spectrum by $\sigma (\Delta _{D})$.
Let $\lambda \in {\mathbb{R}}\backslash \sigma (\Delta _{D})$, $g\in
L^{2}(\partial \Omega )$ and let $u\in W^{1,2}(\Omega )$ be the weak
solution of the following Dirichlet problem 
\begin{equation}
-\Delta u=\lambda u\;\mbox{ in }\;\Omega ,\;\;\;u|_{\partial \Omega }=g.
\label{CDP}
\end{equation}%
The classical Dirichlet-to-Neumann map is the operator $\mathbb{D}%
_{1,\lambda }$ on $L^{2}(\partial \Omega )$ with domain 
\begin{align*}
D(\mathbb{D}_{1,\lambda })=\Big\{g\in L^{2}(\partial \Omega ),\;\exists
\;u\in W^{1,2}(\Omega )\;& \mbox{ solution of }\;\eqref{CDP} \\
& \mbox{ and }\;\partial _{\nu }u\;\mbox{
exists in }\;L^{2}(\partial \Omega )\Big\},
\end{align*}%
and given by 
\begin{equation*}
\mathbb{D}_{1,\lambda }g=\partial _{\nu }u.
\end{equation*}%
It has been shown in \cite{AR-CPAA} that $\mathbb{D}_{1,\lambda }$ is the
self-adjoint operator on $L^{2}(\partial \Omega )$ associated with the
bilinear symmetric and continuous form $\mathcal{E}_{1,\lambda }$ with
domain $W^{\frac{1}{2},2}(\partial \Omega )$ given by 
\begin{equation*}
\mathcal{E}_{1,\lambda }(\varphi ,\psi )=\int_{\Omega }\nabla u\cdot \nabla
vdx-\lambda \int_{\Omega }uvdx,
\end{equation*}%
where $\varphi ,\psi \in W^{\frac{1}{2},2}(\partial \Omega )$ and $u,v\in 
\mathcal{H}^{1,\lambda }(\Omega )$ are such that $u|_{\partial \Omega
}=\varphi $ and $v|_{\partial \Omega }=\psi $. Here,\begin{equation*}
\mathcal{H}^{1,\lambda }(\Omega )=\Big\{u\in W^{1,2}(\Omega ),\;\;-\Delta
u=\lambda u\Big\},
\end{equation*}%
and by $-\Delta u=\lambda u$ we mean that 
\begin{equation*}
\int_{\Omega }\nabla u\cdot \nabla udx=\lambda \int_{\Omega }uvdx,\;\forall
\;v\in W_{0}^{1,2}(\Omega ).
\end{equation*}%

  \item{[{\bf Schr\"odinger operators.}]} We present here examples of 
  Schr\"odinger operators that fit into the framework developed  in the previous sections.   Such operators have been extensively studied due to their preeminent role in mathematical physics, especially quantum mechanics.
 
 The example is from \cite{MaSh} (see also the monographs \cite{Bo},\cite{ReSi4},  \cite{Si15}  and  \cite{EdEv},  Chapter VIII,  and Chapter 18 of \cite{Ma11}).   The results of this paper give necessary and sufficient conditions for the Schr\"odinger operators with a  {\it nonnegative potential } defined on $L^2(\Omega)$ with Dirichlet boundary conditions to be positive (the quadratic form is positive definite).  
 
It also gives necessary and sufficient conditions for the operator to have discrete spectrum. Here, discrete spectrum means that the spectrum consists of isolated eigenvalues, all having finite multiplicity.  Combining the two conditions yield a characterization of those open subsets of $\mathbb R^n$ for which the results of \cite{AGKW} apply. There, we considered only bounded open subsets.

 
  We will not consider the result from this reference in its full generality. We take $\Omega\subset\mathbb R^n,\, n\ge 2,$ open and nonempty (more conditions will be imposed below).
 Let $V\in L^1_{\rm{loc}}(\mathbb R^n)$ be almost everywhere nonnegative, and consider the quadratic form $a$:
 \begin{equation}\label{form-abc}
a(u,u)=\int_\Omega \vert \nabla u\vert^2dx+\int_\Omega \vert  u\vert^2Vdx
\end{equation}
with domain $D(a)=C_0^\infty(\Omega).$
 The form $a$ is closable (see e.g.   (see e.g.  \cite[Theorem 1.8.1]{Dav},  
 \cite[Section 12.4]{Mabook}, \cite[Theorem 7.2]{Bo},  \cite[Theorem 7.6.2]{Si15}).
\end{enumerate}
We denote by $H_V$ the associated self-adjoint operator.   Formally,
$$H_V=-\Delta+V.$$

We define the family $\mathcal G_d.$  Here $\mathcal G$ will be an open subset of $ \mathbb R^n$ satisfying the conditions
\begin{enumerate}
\item $\mathcal G$ is bounded and star-shaped with respect to $B(0,\rho)$ (the open ball of radius $\rho>0$ centered at $0.$ The condition means that $\mathcal G$ is star-shaped with respect to any point of $B(0,\rho).$
\item  $\rm{diam}(\mathcal G)=1.$
\end{enumerate}
 Then if $$\mathcal G_d(0)=\{x:\, d^{-1} x\in \mathcal G\},$$ the set
  $\mathcal G_d$ will be any body which is obtained from  $\mathcal G_d(0)$ by translation.
  
For $\gamma\in (0,1),$ the class (the so-called negligibility class) $\mathcal N_\gamma(\mathcal G_d;\Omega)$ consists of all compact sets $F\subset \overline{\mathcal G_d}$ such that
$$\overline{\mathcal G_d}\setminus \Omega\subset F\subset \overline{\mathcal G_d}$$ and
$$\rm{cap}(F)\le \gamma \rm{cap}(\overline{\mathcal G_d})$$

where $\rm{cap}$ is the Wiener capacity.

We denote $\mu_V$ the measure $V dx$ ($dx$ being the Lebesgue measure on $\mathbb R^n$) Take $\gamma\in (0,1) $ and  consider the following condition:

\noindent {\bf (Cp)} 
There exists $d_0>0$ and $\kappa>0$  such that 
\begin{equation}\label{cns-positive}
\displaystyle d^{-n}\inf_{F\in\mathcal N_\gamma(\mathcal G_d,\Omega)}\mu_V(\overline{\mathcal G_d}\setminus F)\ge\kappa
\end{equation}
for every $d>d_0$ and every ${\mathcal G_d}.$\\

The situation just described includes the case $V\equiv 0$ when $\Omega$ is bounded, and also some cases where $\Omega$ is unbounded.  It also includes for example cases where $\Omega\subset\mathbb R^n$ is unbounded and condition  {\bf (Cp)}  is satisfied.\\

More concretely (covered by the previous description),  when $\Omega=\mathbb R^n,$ we have the following example from \cite[Theorem 7.3]{Bo}.


Suppose the potential $V\in L^1_{\rm{loc}}( \mathbb {R}^n)$ is such that $V\geq 0$ and consider $H=-\Delta+V$, the self-adjoint operator described above. Assume further that 
$\displaystyle \lim_{\vert x\vert\to\infty} V(x) =\infty.$ Then the operator $H$ has compact resolvent and purely discrete spectrum. This result is due to K. Friedriechs.  It follows that if we consider the quantum harmonic oscillator which corresponds to $V(x)=\|x\|^2$
or more generally $\|x\|^2$ and an equivalent norm on $\mathbb{R}^n$, then $H=-\Delta+V$ has compact resolvent and the first eigenvalue is positive.  In fact the spectrum can be 
explicitly described, and so are the corresponding eigenvectors.


The case of magnetic Schr\"odinger operators (which are important in the theory of liquid crystals and superconductivity) can be found in \cite{KoMaSh04} by Kondratiev, Maz'ya and Shubin, which extends the results of  \cite{MaSh}.  For magnetic Schr\"odinger operators, see also \cite{Ko-Ma-Sh09}  and \cite{LeSi81}.  These results are presented in the monograph \cite{Ma11}, Chapter 18. Magnetic Schr\"odinger operators are formally defined on an open subset $\Omega\subset\mathbb R^n$ as 
\begin{equation}\label{magnetic}
H_{a,V}=-\sum_{j=1}^n\left(\frac{\partial }{\partial x^j}+ia_j\right)^2+V.
\end{equation}
Here $a_j=a_j(x),\, x\in\Omega, 1\le j\le n.$ If we set 
$$\nabla_au=\nabla u+iau=(\frac{\partial}{\partial x_1}+ia_1 u,\cdot\cdot\cdot,\frac{\partial}{\partial x_n}+ia_nu)$$ with the associated quadratic form initially defined on $C_c^\infty(\Omega)$, as:
\begin{equation}\label{form-abcm}
a(u,u)=\int_\Omega \vert \nabla_a u\vert^2dx+\int_\Omega \vert  u\vert^2Vdx
\end{equation}
corresponding to Dirichlet boundary conditions. As in the case of Schr\"odinger operators just presented, the necessary and sufficient conditions for discreteness and strict positivity of these operators. Here, discreteness is equivalent to having compact resolvent. In both cases, the conditions involve (the Wiener) capacity. Related results are discussed in \cite[Chapter 18]{Ma11}. An alternative condition equivalent to the discreteness of the spectrum of Schr\"odinger operators is given by Taylor in  \cite{Tay06}. The condition is in terms of scattering length, a concept that is shown by the author to have some relationship with capacity used by Maz'ya and his collaborator in the results mentioned above.



\bibliographystyle{plain}
\bibliography{biblio}

\end{document}